\newtheorem {theorem}{Theorem}[section]
\newtheorem {proposition}[theorem]{Proposition}
\newtheorem {lemma}[theorem]{Lemma}
\newtheorem {definition}[theorem]{Definition}
\newtheorem {example}[theorem]{Example}
\newtheorem {remark}[theorem]{Remark}
\def\ar{a\kern-.370em\raise.16ex\hbox{\char95\kern-0.53ex\char'47}\kern.05em}
\def\ees{{\accent"5E e}\kern-.385em\raise.2ex\hbox{\char'23}\kern-.08em}
\def\eex{{\accent"5E e}\kern-.470em\raise.3ex\hbox{\char'176}}
\def\AR{A\kern-.46em\raise.80ex\hbox{\char95\kern-0.53ex\char'47}\kern.13em}
\def\EES{{\accent"5E E}\kern-.5em\raise.8ex\hbox{\char'23 }}
\def\EEX{{\accent"5E E}\kern-.60em\raise.9ex\hbox{\char'176}\kern.1em}
\def\ow{o\kern-.42em\raise.82ex\hbox{
\vrule width .12em height .0ex depth .075ex \kern-0.16em \char'56}\kern-.07em}
\def\OW{O\kern-.460em\raise1.36ex\hbox{
\vrule width .13em height .0ex depth .075ex \kern-0.16em \char'56}\kern-.07em}
\def\UW{U\kern-.42em\raise1.36ex\hbox{
\vrule width .13em height .0ex depth .075ex \kern-0.16em \char'56}\kern-.07em}
\def\DD{D\kern-.7em\raise0.4ex\hbox{\char '55}\kern.33em}
\def\Limsup{\mathop{{\rm Lim}\,{\rm sup}}}
\def\cl{\mbox{\rm cl}\,}
\def\R{\mathbb{R}}
\title{Coderivatives at infinity of set-valued mappings}
\author{DO SANG KIM}
\address[Do Sang Kim]{Department of Applied Mathematics, Pukyong National University, Busan 48513, Republic of Korea}
\email{dskim@pknu.ac.kr}
\author{TI\EES N-S\OW N PH\d{A}M}
\address[Ti\ees n-S\ow n Ph\d{a}m]{Department of Mathematics, Dalat University, 1 Phu Dong Thien Vuong, Dalat, Vietnam}
\email{sonpt@dlu.edu.vn}
\author{NGUYEN MINH TUNG}
\address[Nguyen Minh Tung]{Faculty of Mathematical Economics, Banking University of Ho Chi Minh City, Ho Chi Minh
City, Vietnam}
\email{tungnm@hub.edu.vn}
\author{NGUYEN VAN TUYEN}
\address[Nguyen Van Tuyen]{Department of Mathematics, Hanoi Pedagogical University 2, Xuan Hoa, Phuc Yen, Vinh Phuc, Vietnam}
\email{nguyenvantuyen83@hpu2.edu.vn; tuyensp2@yahoo.com}
\thanks{The first author was supported by the National Research Foundation of Korea Grant funded by the Korean Government (NRF-2019R1A2C1008672)}
\date{\today}
\keywords{Coderivatives at infinity, Fermat's rule, Mordukhovich's criterion, well-posedness}
\subjclass[2010]{49K27 $\cdot$  49K40 $\cdot$ 49J52 $\cdot$ 49J53 $\cdot$  90C30}
\begin{document}

\maketitle

\begin{abstract} 
In this paper, the concept of coderivatives at infinity of set-valued mappings is introduced.  Well-posedness properties at infinity of set-valued mappings as well as Mordukhovich's criterion at infinity are established. Fermat's rule at infinity in set-valued optimization is also provided. The obtained results, which give new information even in the classical cases of smooth single-valued mappings, provide complete characterizations of the properties under consideration in the setting at infinity of set-valued mappings.
\end{abstract}

\section{Introduction}
The theory of coderivatives (or generalized derivatives) is concerned with the differential properties of set-valued mappings. In the finite dimensional case, this theory associates with a set-valued mapping $F \colon \mathbb{R}^n \rightrightarrows  \mathbb{R}^m$ and a point $(x, y)$ in the graph of $F,$ a set-valued mapping $D^*F({x}, {y}) \colon \R^m \rightrightarrows \R^n,$ which is called the {\em coderivative} of $F$ at $(x, y).$ The theory 
is recognized for many its applications to variational analysis and optimization, as seen for instance in the books \cite{Aubin1990, Borwein2005, Clarke1990, Clarke1998, Ioffe2017, Mordukhovich2006, Mordukhovich2018, Penot2013, Rockafellar1998} with the references therein.

Mordukhovich \cite{Mordukhovich1980} introduced the coderivative mappings $D^*F({x}, {y})$ for the purposes of deriving necessary optimality conditions in optimization problems with nonsmooth constraints and establishing a maximum principle in the optimal control of differential inclusions.  
These mappings enjoy a variety of nice properties, which are important for applications. For example, well-posedness properties of set-valued mappings such as the linear openness (or covering property), the metric regularity and the Lipschitz-like continuity (or Aubin property) can be characterized in terms of coderivatives; also, Fermat's rules (i.e., the first-order necessary optimality conditions) for set-valued optimization problems can be formulated via coderivatives. For more details, see \cite{Mordukhovich2018}.

The aim of this paper is to study the behavior of set-valued mappings in {\em neighborhoods of infinity.} To do this, following the idea of Mordukhovich \cite{Mordukhovich1980}, we define and study {\em coderivatives at infinity} of set-valued mappings. The obtained results, which give new information even in the classical cases of smooth single-valued mappings, are aimed ultimately at applications to diverse problems of variational analysis and optimization following the now-familiar pattern for coderivatives in the classical cases.

At this point, we would like to mention that several novel notions of variational analysis, including tangent/normal cones at infinity to unbounded sets and subdifferentials at infinity for extended-real-value functions, are introduced and studied in the recent papers \cite{PHAMTS2023-4, PHAMTS2023-5}.

\subsection*{Contributions.}
Let $F \colon \mathbb{R}^n \rightrightarrows \mathbb{R}^m$ be a set-valued mapping whose graph is closed, and let $\overline{y} \in \R^m$ be a Jelonek value of $F$, i.e., there exist sequences $x_k \to \infty$ and $y_k \to y$ with $y_k\in F(x_k).$ With notation and definitions given in the next sections, the main contributions of this paper are as follows.

\begin{enumerate}[{\rm $\bullet$}]
\item The coderivative $D^*F (\infty, \overline{y})$ of $F$ at $(\infty, \overline{y})$ is introduced and studied; in particular, basic calculus rules (e.g., sum rule and chain rule) for this coderivative are provided.

\item It will be shown that the following conditions are equivalent:
\begin{enumerate}[{\rm (i)}]
 \item $\mathrm{ker}\,D^*F (\infty, \overline{y}) = \{0\}.$
\item $F$ is linearly open around $(\infty, \overline{y}).$
\item $F$ is metrically regular around $(\infty, \overline{y}).$
\item $F^{-1}$ is Lipschitz-like around $(\overline{y}, \infty).$
\end{enumerate}

\item A version at infinity of Mordukhovich's criterion will be established; namely, the following conditions are equivalent:
\begin{enumerate}[{\rm (i)}]
\item $D^*F (\infty, \overline{y})(0) = \{0\}.$
\item $F$ is Lipschitz-like around $(\infty, \overline{y})$ and $F^{-1}$ is open around $(\overline{y}, \infty).$
\end{enumerate}
(We note that this equivalence is a bit different from the classical one.)

\item Fermat's rule at infinity in set-valued optimization is given. Namely, consider the set-valued optimization problem with explicit geometry constraints
\begin{equation} \label{Problem}
\mathrm{minimize}_{K} \ F (x) \quad \textrm{ subject to } \quad x \in \Omega, \tag{P}
\end{equation}
where $K \subsetneq \mathbb{R}^m$ is a pointed closed convex cone with nonempty interior and $\Omega$ be an unbounded closed subset of $\mathbb{R}^n.$ It will be demonstrated that if $\overline{y} \not \in F(\Omega)$ is a weak efficient value for \eqref{Problem} in which the following constraint qualification at infinity holds
\begin{eqnarray*}
D^*F(\infty, \overline{y})(0) \cap \big(-N_{\Omega}(\infty)\big) &=& \{0\},
\end{eqnarray*}
then there exists a nonzero vector $c^* \in K^{+}$ such that
\begin{eqnarray*}
0 &\in& D^*F(\infty, \overline{y}) (c^*) + N_{\Omega}(\infty),
\end{eqnarray*} 
where $N_{\Omega}(\infty)$ is the {\em normal cone to $\Omega$ at infinity} and $K^+$ is the {\em positive polar} of $K.$
\end{enumerate}

It should be noted that, due to the unboundedness of neighborhoods at infinity, the coderivative at infinity of set-valued mappings enjoy some properties, which are different from the classical ones; see Theorem~\ref{Thm45}.

We confine our study to the finite dimensional case for two reasons. First of all, to lighten the exposition, we would like to concentrate on the basic ideas without technical and notational complications. Furthermore, some of the constructions used and the results obtained are peculiar to finite dimensions.

The rest of the paper is organized as follows. Some definitions and preliminary results from variational analysis are recalled in Section~\ref{Section2}. The notions of normal cone, coderivative and subdifferential at infinity are introduced and studied in Section~\ref{Section3}. 
Well-posedness properties at infinity of set-valued mappings as well as Mordukhovich's criterion at infinity are presented in Section \ref{Section4}. Necessary optimality conditions at infinity for set-valued optimization problems are derived in Section~\ref{Section5}. 

\section{Preliminaries} \label{Section2}

\subsection{Notation and definitions} 
Throughout this work we deal with the Euclidean space $\mathbb{R}^n$ equipped with the usual scalar product $\langle \cdot, \cdot \rangle$ and the corresponding norm $\| \cdot\|.$ We denote by $\mathbb{B}_r(x)$ the closed ball centered at $x$ with radius $r;$  when ${x}$ is the origin of $\mathbb{R}^n$ we write $\mathbb{B}_{r}$ instead of $\mathbb{B}_{r}({x}),$ and when $r = 1$ we write  $\mathbb{B}$ instead of $\mathbb{B}_{1}.$ We will adopt the convention that $\inf \emptyset = +\infty$ and $\sup \emptyset = -\infty.$

For a nonempty set $\Omega \subset \mathbb{R}^n,$ the closure, interior, convex hull and conic hull of $\Omega$ are denoted, respectively, by $\mathrm{cl}\, {\Omega},$ $\mathrm{int}\, {\Omega},$ $ \mathrm{co}\, \Omega$ and $\mathrm{cone}\, \Omega.$

Let $\mathbb{R}_+ := [0, +\infty)$ and $\overline{\mathbb{R}} := \mathbb{R} \cup \{+\infty\}.$ For an extended-real-valued function $f \colon \mathbb{R}^n \rightarrow \overline{\mathbb{R}},$ we denote its {\em effective domain} and {\em epigraph} by,  respectively,
\begin{eqnarray*}
\mathrm{dom} f &:=& \{ x \in \mathbb{R}^n \ | \ f(x) < \infty  \},\\
\mathrm{epi} f &:=& \{ (x, r) \in \mathbb{R}^n \times \mathbb{R} \ | \ f(x) \le r \}.
\end{eqnarray*}
We call $f$ a {\em proper} function if $f(x) < \infty$ for at least one $x \in \mathbb{R}^n,$ or in other words, if $\mathrm{dom} f$ is a nonempty set.
The function $f$ is said to be {\em lower semi-continuous} if for each $x \in \mathbb{R}^n$ the inequality $\liminf_{x' \to {x}} f(x') \ge f({x})$ holds.

Given a nonempty set $\Omega \subset \mathbb{R}^n,$ associate with it the {\em distance function}
\begin{eqnarray*}
\mathrm{dist}(x, \Omega) &:=& \inf_{y \in \Omega} \|x - y\| \quad x \in \mathbb{R}^n,
\end{eqnarray*}
and define the {\em Euclidean projector of} $x \in \mathbb{R}^n$ to $\Omega$ by
\begin{eqnarray*}
\Pi_{\Omega}(x) &:=& \{y \in \Omega \ | \  \|x - y \| = \mathrm{dist}(x, \Omega)\}.
\end{eqnarray*}
If $\Omega = \emptyset,$ we let $\mathrm{dist}(x, \Omega) := \infty$ for all $x.$ The {\em indicator function} $\delta_{\Omega} \colon \mathbb{R}^n \to \overline{\mathbb{R}}$ of the set $\Omega \subset \mathbb{R}^n$ is defined by
\begin{eqnarray*}
\delta_\Omega(x) &:=&
\begin{cases}
0 & \textrm{ if } x \in \Omega, \\
\infty & \textrm{ otherwise.}
\end{cases}
\end{eqnarray*}
By definition, $\Omega$ is closed if and only if $\delta_\Omega$ is lower semi-continuous. The set $\Omega$ is said to be {\em locally closed} at a point ${x}$ (not necessarily in $\Omega$) if $\Omega \cap V$ is closed for some closed neighborhood $V$ of ${x}$ in $\mathbb{R}^n.$ 
We say that $\Omega$ is {\em locally closed} if it is {\em locally closed} at every point $x \in \Omega.$

Let  $F \colon \mathbb{R}^n \rightrightarrows \mathbb{R}^m$ be a set-valued mapping. The {\em domain}, {\em kernel} and {\em graph} of $F$ are taken to be the sets
\begin{eqnarray*}
\mathrm{dom} F &:=& \{x\in \mathbb{R}^n \mid F(x)\not= \emptyset\}, \\
\mathrm{ker} F&:=& \{x \in \mathbb{R}^n \mid 0 \in F(x)\},\\
\mathrm{gph} F&:=& \{(x,y)\in \mathbb{R}^n \times \mathbb{R}^m \mid y \in F(x)\}.
\end{eqnarray*}
The {\em Painlev\'e--Kuratowski outer limit} of $F$ is given by
\begin{eqnarray*}
\Limsup_{x' \to {x}} F(x') &:=& \{y \in \mathbb{R}^m \mid \exists x_k \to {x}, \exists y_k \in F(x_k), y_k \to y\}.
\end{eqnarray*}
The inverse mapping $F^{-1} \colon \mathbb{R}^m \rightrightarrows \mathbb{R}^n$ is defined by $F^{-1}(y) := \{x \in \mathbb{R}^n \mid y \in F(x)\};$ obviously $(F^{-1})^{-1} = F.$
The mapping $F$ is called {\em positively homogeneous} if $0 \in F(0)$ and $t F(x) \subset F(t x)$ for all $x \in \mathbb{R}^n$ and $t > 0;$ in this case, we let
\begin{eqnarray*}
\|F\| &:=& \sup \big \{\|y\| \mid  y \in F(x) \ \ \text{ and }\ \ \|x\|\leq 1\big\}.
\end{eqnarray*}   

\subsection{Normal cones, coderivatives and subdifferentials}

Here we recall some definitions and properties of normal cones to sets, coderivatives of set-valued mappings and subdifferentials of real-valued functions, which can be found in~\cite{Mordukhovich2006, Mordukhovich2018, Rockafellar1998}.

\begin{definition}[normal cones]{\rm 
Consider a set $\Omega\subset\mathbb{R}^n$ and a point ${x} \in \Omega.$
\begin{enumerate}
\item[(i)]  The {\em regular normal cone} (known also as the {\em prenormal} or {\em Fr\'echet normal cone}) $\widehat{N}_{\Omega}({x})$ to $\Omega$ at ${x}$ consists of all vectors $\xi \in\mathbb{R}^n$ satisfying
\begin{eqnarray*}
\langle \xi , x' - {x} \rangle &\le& o(\|x' -  {x}\|) \quad \textrm{ as } \quad x' \to {x} \quad \textrm{ with } \quad x' \in \Omega.
\end{eqnarray*}

\item[(ii)] 
The {\em limiting normal cone} (known also as the {\em basic} or {\em Mordukhovich normal cone}) $N_{\Omega} ({x})$ to $\Omega$ at ${x}$ consists of all vectors $\xi  \in \mathbb{R}^n$ such that there are sequences $x_k \to {x}$ with $x_k \in \Omega$ and $\xi _k \rightarrow \xi $ with $\xi _k \in \widehat N_{\Omega}(x_k),$ or in other words, 
\begin{eqnarray*}
{N}_\Omega({x}) & := &  \Limsup_{x' \xrightarrow{\Omega} {x}}\widehat{N}_\Omega({x}'),
\end{eqnarray*}
where $x' \xrightarrow{\Omega} {x}$ means that $x' \rightarrow {x} $ with $x' \in \Omega.$ 
\end{enumerate}

If $x \not \in \Omega,$ we put $\widehat{N}_{\Omega}({x}) := \emptyset$ and ${N}_\Omega({x}) := \emptyset.$
}\end{definition}

\begin{remark}{\rm
(i) It is well known that $N_\Omega(\overline{x})$ is closed cone (may be non-convex), while $\widehat{N}_\Omega(\overline{x})$ is closed convex cone.

(ii) If $\Omega$ is a manifold of class $C^1,$ then for every point $x \in \Omega,$ the normal cones $\widehat{N}({x}; \Omega)$ and $N({x}; \Omega)$ are equal to the normal space to $\Omega$ at ${x}$ in the sense of differential geometry; see \cite[Example~6.8]{Rockafellar1998}. 

}\end{remark}

\begin{lemma} \label{Lemma2.3}
Let $\Omega \subset \mathbb{R}^n$ be a locally closed set. Then for any $x \in \Omega,$ we have the following relationships
\begin{eqnarray*}
\widehat{N}_{\Omega}(x) &=& \{ \xi \in \mathbb{R}^n \mid \langle \xi, v \rangle \le 0 \quad \textrm{ for all } \quad v \in T_{\Omega}(x) \},\\
N_{\Omega}(x) &=& \Limsup_{x' \rightarrow x} \Big[\mathrm{cone} \big(x' - \Pi_{\Omega}(x') \big) \Big],
\end{eqnarray*}
where $T_{\Omega}(x)$ stands for the {\em contingent cone} to $\Omega$ at $x \in \Omega,$ i.e.,
\begin{eqnarray*}
T_{\Omega}(x) &:=& \Limsup_{t \searrow 0} \frac{\Omega - x}{t}.
\end{eqnarray*}
\end{lemma}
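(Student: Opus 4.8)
The plan is to prove the two displayed identities separately, since the first is a pointwise polarity relation between the regular normal cone and the contingent cone, whereas the second expresses the limiting normal cone as an outer limit of proximal normal directions; the second is considerably deeper.

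For the first identity I would establish directly from the definitions that $\xi\in\widehat N_\Omega(x)$ if and only if $\langle\xi,v\rangle\le 0$ for every $v\in T_\Omega(x)$ (local closedness is not even needed here). For the inclusion ``$\subseteq$'', take $\xi\in\widehat N_\Omega(x)$ and $v\in T_\Omega(x)$; by definition of the contingent cone there are $t_k\searrow 0$ and $x_k\in\Omega$ with $(x_k-x)/t_k\to v$, hence $x_k\to x$, and writing $\langle\xi,(x_k-x)/t_k\rangle$ as the product of $\langle\xi,x_k-x\rangle/\|x_k-x\|$ with $\|x_k-x\|/t_k\to\|v\|$, the defining inequality $\limsup\langle\xi,x_k-x\rangle/\|x_k-x\|\le 0$ forces $\langle\xi,v\rangle\le 0$. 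For ``$\supseteq$'', if $\xi$ lies in the polar but not in $\widehat N_\Omega(x)$, I would extract $x_k\xrightarrow{\Omega}x$ with $x_k\ne x$ and $\langle\xi,x_k-x\rangle/\|x_k-x\|\ge\varepsilon>0$, normalize $v_k=(x_k-x)/\|x_k-x\|$, pass to a convergent subsequence $v_k\to v\in T_\Omega(x)$ on the unit sphere, and conclude $\langle\xi,v\rangle\ge\varepsilon>0$, a contradiction.

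For the second identity, consider first ``$\supseteq$''. Take $\xi$ in the right-hand outer limit, so $\xi_k\to\xi$ with $\xi_k=\lambda_k(x_k-w_k)\in\mathrm{cone}(x_k-\Pi_\Omega(x_k))$, where $\lambda_k\ge 0$, $w_k\in\Pi_\Omega(x_k)$ and $x_k\to x$. The key sub-fact is that each such direction is a regular normal at its base point: expanding $\|x_k-w_k\|^2\le\|x_k-z\|^2$ for $z\in\Omega$ gives $\langle x_k-w_k,z-w_k\rangle\le\tfrac12\|z-w_k\|^2$, whence $x_k-w_k\in\widehat N_\Omega(w_k)$ and, $\widehat N_\Omega(w_k)$ being a cone, $\xi_k\in\widehat N_\Omega(w_k)$. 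Since $x\in\Omega$, we have $\|x_k-w_k\|=\mathrm{dist}(x_k,\Omega)\le\|x_k-x\|\to 0$, so $w_k\to x$ with $w_k\in\Omega$, and the definition of $N_\Omega(x)$ yields $\xi\in N_\Omega(x)$. Local closedness enters only to guarantee $\Pi_\Omega(x_k)\ne\emptyset$ for $x_k$ near $x$.

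The reverse inclusion $N_\Omega(x)\subseteq\Limsup_{x'\to x}[\mathrm{cone}(x'-\Pi_\Omega(x'))]$ is where the real work lies, and the main obstacle is that a regular (Fr\'echet) normal need not itself be a proximal normal. I would reduce, via the definition of $N_\Omega$ and a diagonal argument, to the pointwise density claim: for each $w\in\Omega$ and $\xi\in\widehat N_\Omega(w)$ one has $\xi\in\Limsup_{x'\to w}[\mathrm{cone}(x'-\Pi_\Omega(x'))]$. To prove this, set $x_t'=w+t\xi$ for small $t>0$, pick $w_t\in\Pi_\Omega(x_t')$, and put $d_t=w_t-w$; minimality $\|x_t'-w_t\|\le\|x_t'-w\|=t\|\xi\|$ gives $\|d_t\|\le 2t\|\xi\|$, so $w_t\to w$. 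Expanding $\|x_t'-w_t\|^2\le\|x_t'-w\|^2$ yields $\|d_t\|^2\le 2t\langle\xi,d_t\rangle$, and inserting the regular-normal estimate $\langle\xi,d_t\rangle\le o(\|d_t\|)$ produces $\|d_t\|/t\to 0$. Consequently $\tfrac1t(x_t'-w_t)=\xi-d_t/t\to\xi$ with $\tfrac1t(x_t'-w_t)\in\mathrm{cone}(x_t'-\Pi_\Omega(x_t'))$ and $x_t'\to w$, which proves the claim. The delicate point throughout is the control of $d_t/t$, where the defining property of $\widehat N_\Omega(w)$ must be used in exactly this form.
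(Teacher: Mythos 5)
The paper does not prove Lemma~2.3 at all: it is recalled as a known fact, with the reader referred to the cited monographs (Mordukhovich; Rockafellar--Wets), so there is no internal proof to compare against. Your argument is correct and is precisely the standard proof from those references: the polarity $\widehat N_\Omega(x)=T_\Omega(x)^{\circ}$ via normalized sequences, and the projection formula via the two classical ingredients --- every direction $x'-w$ with $w\in\Pi_\Omega(x')$ is a (proximal, hence regular) normal at $w$, and conversely every regular normal $\xi\in\widehat N_\Omega(w)$ is recovered as a limit of such directions through the perturbation $x_t'=w+t\xi$, with the estimate $\|d_t\|^2\le 2t\langle\xi,d_t\rangle\le 2t\,o(\|d_t\|)$ forcing $d_t/t\to 0$, followed by a diagonal argument. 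Two minor points you may wish to make explicit: in the first identity the quotient $\langle\xi,x_k-x\rangle/\|x_k-x\|$ requires discarding indices with $x_k=x$ (harmless, and the case $v=0$ is trivial); and local closedness is needed not only in the inclusion $\supseteq$, as you state, but also in your density claim, where picking $w_t\in\Pi_\Omega(x_t')$ silently uses nonemptiness of the projection for small $t>0$, which holds because near $w\in\Omega$ any minimizing sequence eventually lies in the closed set $\Omega\cap V$.
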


\begin{definition}[coderivatives]{\rm 
Consider a set-valued mapping $F \colon \mathbb{R}^n \rightrightarrows \mathbb{R}^m$ and a point $(\overline{x}, \overline{y}) \in \mathrm{gph} F.$ Assume that $\mathrm{gph} F$ is locally closed at $(\overline{x}, \overline{y}).$ {\em The (basic) coderivative} of $F$ at $(\overline{x}, \overline{y})$ is the set-valued mapping $D^*F(\overline{x}, \overline{y}) \colon \mathbb{R}^m \rightrightarrows \mathbb{R}^n$ defined by
\begin{eqnarray*}
D^*F(\overline{x}, \overline{y})(v) &=& \{u \in \mathbb{R}^n \mid (u, -v)\in N_{\mathrm{gph} F} (\overline{x}, \overline{y}) \} \quad \textrm{ for all } \quad v \in \mathbb{R}^m.
\end{eqnarray*}
}\end{definition}

By definition, the mapping $D^*F(\overline{x}, \overline{y})$ is positively homogeneous.

\begin{definition}[subdifferentials]{\rm
Consider a extended-real-valued function $f\colon\mathbb{R}^n \to \overline{\mathbb{R}}$ and a point ${x} \in \mathrm{dom} f.$
\begin{enumerate}[{\rm (i)}]
\item The {\em regular} (or {\em Fr\'echet}) {\em subdifferential} of $f$ at ${x}$ is 
$$\widehat{\partial}f({x}) :=\{ u \in \mathbb{R}^n \mid  (u, -1) \in \widehat{N}_{\mathrm{epi} f}({x},f({x}))  \}.  $$
\item The {\em limiting} (or {\em Mordukhovich}) {\em subdifferential} of $f$ at ${x}$ is 
$$\partial f({x}):=\{ u \in \mathbb{R}^n \mid  (u, -1)\in {N}_{\mathrm{epi} f}({x},f({x}))  \}.$$
\item The {\em singular subdifferential} of $f$ at ${x}$ is 
$$\partial^\infty f({x}) := \{ u \in \mathbb{R}^n \mid (u, 0)\in {N}_{\mathrm{epi} f}({x},f({x}))  \}.$$
\end{enumerate}
If $x \not \in \mathrm{dom} f,$ we put $\widehat{\partial}f({x}) := \emptyset,$ $\partial f({x}) := \emptyset$, and $\partial^\infty f({x}) := \emptyset.$ 
}\end{definition}

\subsubsection{Known facts}
The following four lemmas are well known; see \cite{Mordukhovich2006, Mordukhovich2018, Rockafellar1998}.

\begin{lemma} \label{Lemma2.6}
For any closed set $\Omega \subset \mathbb{R}^n$ and point ${x} \in \Omega,$ we have
\begin{eqnarray*}
\partial \delta_\Omega({x}) &=&  \partial^\infty \delta_\Omega({x})  \ = \ N_{\Omega}({x}).
\end{eqnarray*}
\end{lemma}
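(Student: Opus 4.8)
The plan is to reduce the statement to a direct computation of the limiting normal cone of the epigraph of $\delta_\Omega$ at the relevant base point. First I would unwind the definitions: by the definitions of the epigraph and of the indicator function, one has $\mathrm{epi}\, \delta_\Omega = \Omega \times \mathbb{R}_+$, and since $x \in \Omega$ we have $\delta_\Omega(x) = 0$. Hence the base point occurring in each of the three objects $\partial\delta_\Omega(x)$, $\partial^\infty\delta_\Omega(x)$, and $N_\Omega(x)$ is $(x, 0)$, and the whole lemma hinges on identifying $N_{\mathrm{epi}\, \delta_\Omega}(x, 0) = N_{\Omega \times \mathbb{R}_+}(x, 0)$. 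Because $\Omega$ is closed and $\mathbb{R}_+$ is closed, the product $\Omega \times \mathbb{R}_+$ is closed in $\mathbb{R}^n \times \mathbb{R}$, so its limiting normal cone is well defined and the constructions of the preceding definitions apply.

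The key step is the product rule for limiting normal cones, which in finite dimensions is valid for arbitrary closed sets: $N_{\Omega \times \mathbb{R}_+}(x, 0) = N_\Omega(x) \times N_{\mathbb{R}_+}(0)$. Granting this, it remains to compute the one-dimensional factor $N_{\mathbb{R}_+}(0)$. Using Lemma~\ref{Lemma2.3}, the contingent cone is $T_{\mathbb{R}_+}(0) = \mathbb{R}_+$, so the regular normal cone is $\widehat{N}_{\mathbb{R}_+}(0) = \{\xi \in \mathbb{R} \mid \xi v \le 0 \text{ for all } v \ge 0\} = (-\infty, 0]$; since $\mathbb{R}_+$ is convex the limiting and regular normal cones coincide, giving $N_{\mathbb{R}_+}(0) = (-\infty, 0]$. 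Therefore $N_{\mathrm{epi}\, \delta_\Omega}(x, 0) = N_\Omega(x) \times (-\infty, 0]$.

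With this identification the three equalities fall out by simply reading off the second coordinate. For the limiting subdifferential I would note that $(u, -1) \in N_\Omega(x) \times (-\infty, 0]$ holds if and only if $u \in N_\Omega(x)$, because $-1 \in (-\infty, 0]$ automatically; hence $\partial\delta_\Omega(x) = N_\Omega(x)$. For the singular subdifferential, $(u, 0) \in N_\Omega(x) \times (-\infty, 0]$ holds if and only if $u \in N_\Omega(x)$, because $0 \in (-\infty, 0]$ as well; hence $\partial^\infty\delta_\Omega(x) = N_\Omega(x)$. Combining the two gives the asserted chain of equalities. I expect the only substantive point to be the product formula for the limiting normal cone; everything else is bookkeeping with the definitions. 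Since that product rule is a standard fact in the finite-dimensional theory, I would either cite it from \cite{Rockafellar1998, Mordukhovich2006} or, if a self-contained argument is preferred, prove the inclusion $N_{\Omega \times \mathbb{R}_+}(x,0) \subset N_\Omega(x) \times N_{\mathbb{R}_+}(0)$ by approximating limiting normals with regular normals (where the product rule is immediate from the definition of $\widehat N$) and then passing to the Painlev\'e--Kuratowski outer limit, the reverse inclusion being analogous.
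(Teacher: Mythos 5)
Your proposal is correct. Note, however, that the paper itself offers no proof of this lemma: it is listed among the ``known facts'' and attributed to the standard references \cite{Mordukhovich2006, Mordukhovich2018, Rockafellar1998}, so there is no in-paper argument to compare against. Your derivation is precisely the standard one and is the natural route given the paper's geometric definitions of $\partial f$ and $\partial^\infty f$ via $N_{\mathrm{epi} f}$: the identification $\mathrm{epi}\,\delta_\Omega = \Omega \times \mathbb{R}_+$, the exact product formula $N_{C \times D}(x,y) = N_C(x) \times N_D(y)$ for closed sets (which needs no qualification condition, since it holds with equality already for regular normals and passes to the outer limit coordinatewise), and the computation $N_{\mathbb{R}_+}(0) = (-\infty, 0]$ are all sound, after which reading off the second coordinate for $-1$ and for $0$ gives both equalities. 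The one point worth tightening is cosmetic: rather than proving the two inclusions of the limiting product rule separately ``by analogy,'' observe that the regular-normal product rule is an equality, and that $(x_k, y_k) \xrightarrow{C \times D} (x,y)$ is equivalent to $x_k \xrightarrow{C} x$ and $y_k \xrightarrow{D} y$, so the limiting equality follows in one stroke.
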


\begin{lemma} \label{Lemma2.7}
For any point $\overline{x} \in \mathbb{R}^n,$ we have
\begin{eqnarray*}
\partial \|\cdot - \overline{x}\| (x) &=& 
\begin{cases}
\mathbb{B} & \quad \textrm{ if } x = \overline{x}, \\
\frac{x - \overline{x}}{\|x - \overline{x}\| } & \quad \textrm{ otherwise.}
\end{cases}
\end{eqnarray*}
\end{lemma}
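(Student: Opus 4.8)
The plan is to reduce the computation to the subdifferential of convex analysis. The function $f := \|\cdot - \overline{x}\|$ is convex and finite-valued on $\R^n$, hence locally Lipschitz. For such functions the limiting subdifferential $\partial f(x)$, defined through the normal cone to $\mathrm{epi} f$, coincides with the classical convex subdifferential
\[
\partial_c f(x) = \{u \in \R^n \mid \langle u, y - x\rangle \le f(y) - f(x) \ \text{ for all } \ y \in \R^n\};
\]
this is a standard fact resting on the observation that for a convex set (here $\mathrm{epi} f$) the regular and limiting normal cones coincide, so that $N_{\mathrm{epi} f}$ is exactly the normal cone of convex analysis (see \cite[Chapter~8]{Rockafellar1998} or \cite{Mordukhovich2006}). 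It therefore suffices to evaluate $\partial_c f(x)$.

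Next I would translate the problem to the origin. Writing $g(z) := \|z\|$, we have $f(x) = g(x - \overline{x})$, and since a translation is an invertible affine change of variable, $\partial_c f(x) = \partial_c g(x - \overline{x})$. Hence everything reduces to computing the convex subdifferential of the Euclidean norm $g$ at the point $z := x - \overline{x}$, and the two cases in the statement correspond precisely to $z = 0$ and $z \ne 0$.

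For the case $z = 0$ (i.e.\ $x = \overline{x}$), by definition $u \in \partial_c g(0)$ iff $\langle u, z'\rangle \le \|z'\|$ for all $z' \in \R^n$. By the Cauchy--Schwarz inequality this holds whenever $\|u\| \le 1$; conversely, choosing $z' = u$ forces $\|u\|^2 \le \|u\|$, hence $\|u\| \le 1$. Thus $\partial_c g(0) = \mathbb{B}$, which gives the first branch. For the case $z \ne 0$ (i.e.\ $x \ne \overline{x}$), the norm $g$ is differentiable at $z$ with gradient $\nabla g(z) = z/\|z\|$; since a convex function that is differentiable at a point has there a singleton subdifferential equal to its gradient, we obtain $\partial_c g(z) = \{z/\|z\|\}$, which is the second branch after substituting $z = x - \overline{x}$.

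The computation is entirely routine; the only point demanding care is the reduction in the first paragraph, namely the identification of the limiting subdifferential with the convex subdifferential. This is where one must invoke convexity of $\mathrm{epi} f$ to guarantee that no extra limiting normals arise beyond the regular ones, so that the abstract definition via $N_{\mathrm{epi} f}$ collapses to the elementary convex-analytic description used above.
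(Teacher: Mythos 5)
Your proof is correct; note that the paper itself gives no argument for Lemma~\ref{Lemma2.7}, stating it among the ``known facts'' with a citation to \cite{Mordukhovich2006, Mordukhovich2018, Rockafellar1998}. Your route---identifying the limiting subdifferential with the convex subdifferential via convexity of $\mathrm{epi}\, f$ (so regular and limiting normals coincide), translating to the norm at the origin, and then handling the two cases by Cauchy--Schwarz and by differentiability of $\|\cdot\|$ away from $0$---is exactly the standard argument in those references, with all steps sound.
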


\begin{lemma}[Fermat rule] \label{Lemma2.8}
If a function $f \colon \mathbb{R}^n \to \overline{\mathbb{R}}$ has a local minimum at $\overline{x} \in \mathrm{dom} f,$ then $0 \in \partial f(\overline{x}).$
\end{lemma}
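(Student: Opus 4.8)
The plan is to prove the formally stronger statement that $0 \in \widehat{\partial} f(\overline{x})$ and then pass to the limiting subdifferential via the elementary inclusion $\widehat{\partial} f(\overline{x}) \subset \partial f(\overline{x})$. This inclusion is immediate from the definitions: since ${N}_{\mathrm{epi} f}(\overline{x}, f(\overline{x}))$ is the Painlev\'e--Kuratowski outer limit of the regular normal cones $\widehat{N}_{\mathrm{epi} f}$ at points approaching $(\overline{x}, f(\overline{x}))$, choosing the constant sequence at $(\overline{x}, f(\overline{x}))$ shows $\widehat{N}_{\mathrm{epi} f}(\overline{x}, f(\overline{x})) \subset {N}_{\mathrm{epi} f}(\overline{x}, f(\overline{x}))$, whence $\widehat{\partial} f(\overline{x}) \subset \partial f(\overline{x})$. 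So it suffices to treat the regular subdifferential.

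Write $\overline{r} := f(\overline{x})$. By the definition of the regular subdifferential, the assertion $0 \in \widehat{\partial} f(\overline{x})$ is equivalent to $(0, -1) \in \widehat{N}_{\mathrm{epi} f}(\overline{x}, \overline{r})$, which by the definition of the regular normal cone amounts to the estimate
$$\langle (0, -1), (x', r') - (\overline{x}, \overline{r}) \rangle \le o\big(\|(x', r') - (\overline{x}, \overline{r})\|\big)$$
as $(x', r') \to (\overline{x}, \overline{r})$ with $(x', r') \in \mathrm{epi} f$. The left-hand side equals $\overline{r} - r'$, so the whole problem reduces to checking that $\overline{r} - r' \le 0$ for all such epigraph points sufficiently close to $(\overline{x}, \overline{r})$.

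This is exactly where local minimality is used. Since $f$ attains a local minimum at $\overline{x}$, there is a neighborhood $U$ of $\overline{x}$ on which $f(x) \ge f(\overline{x}) = \overline{r}$. For $(x', r') \in \mathrm{epi} f$ close enough to $(\overline{x}, \overline{r})$ one has $x' \in U$, and the epigraph condition then yields $r' \ge f(x') \ge \overline{r}$; hence $\overline{r} - r' \le 0$, which is trivially bounded above by $o(\|(x', r') - (\overline{x}, \overline{r})\|)$. This verifies the displayed inequality and gives $0 \in \widehat{\partial} f(\overline{x}) \subset \partial f(\overline{x})$.

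I do not expect any genuine obstacle here; the one place that calls for a little care is that the estimate must be validated for arbitrary epigraph points $(x', r')$ nearby, not merely for points of the graph $(x', f(x'))$. This is handled automatically, since every $(x', r') \in \mathrm{epi} f$ satisfies $r' \ge f(x')$, so the lower bound $r' \ge \overline{r}$ coming from the local minimum propagates from the graph to the entire epigraph.
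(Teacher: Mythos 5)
Your proof is correct and complete. Note that the paper itself gives no proof of Lemma~\ref{Lemma2.8}: it is listed among four facts declared ``well known'' with references to Mordukhovich and Rockafellar--Wets, so there is no internal argument to compare against; your argument is precisely the standard one from those references. The two points that require care are both handled properly: first, since $\overline{\mathbb{R}} = \mathbb{R} \cup \{+\infty\}$ and $\overline{x} \in \mathrm{dom} f$, the value $\overline{r} = f(\overline{x})$ is finite, so $(\overline{x}, \overline{r}) \in \mathrm{epi} f$ and the constant sequence is admissible in the outer limit defining $N_{\mathrm{epi} f}(\overline{x}, \overline{r})$, which legitimizes the inclusion $\widehat{N}_{\mathrm{epi} f}(\overline{x}, \overline{r}) \subset N_{\mathrm{epi} f}(\overline{x}, \overline{r})$ and hence $\widehat{\partial} f(\overline{x}) \subset \partial f(\overline{x})$; second, your closing remark correctly identifies and resolves the only genuine subtlety, namely that the regular-normal estimate must hold for all nearby epigraph points $(x', r')$, not just graph points, which follows because $r' \ge f(x') \ge \overline{r}$ whenever $x'$ lies in the neighborhood of local minimality. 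Since $\overline{r} - r' \le 0$ trivially satisfies the $o\big(\|(x',r') - (\overline{x},\overline{r})\|\big)$ upper bound, the verification that $(0,-1) \in \widehat{N}_{\mathrm{epi} f}(\overline{x}, \overline{r})$ is sound, and the lemma follows.
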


\begin{lemma}\label{Lemma2.9} 
Let $f_i \colon \mathbb{R}^n \to \overline{\mathbb{R}}$, $i=1,\dots,m$ with $m \geq 2$, be lower semi-continuous at $\overline{x} \in \mathbb{R}^n$  and let all but one of these functions be Lipschitz around $\overline{x}.$ Then the following inclusion holds:
\begin{eqnarray*}
\partial \left( f_1+\cdots+f_m\right)(\overline{x}) & \subset & \partial f_1(\overline{x})+\cdots+\partial f_m(\overline{x}).
\end{eqnarray*}
\end{lemma}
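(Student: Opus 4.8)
The statement is the classical Lipschitz sum rule for the limiting subdifferential, so my plan is to reproduce the standard variational-analytic argument, whose two ingredients are the representation of $\partial f$ as an outer limit of regular subdifferentials and a fuzzy (approximate) sum rule for $\widehat{\partial}$.

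First I would reduce to the case $m=2$ by induction on $m$. Relabelling, assume $f_1$ is the (possibly) non-Lipschitz summand while $f_2,\dots,f_m$ are Lipschitz around $\overline{x}$. Set $h:=f_1+\cdots+f_{m-1}$; since $f_2,\dots,f_{m-1}$ are continuous, $h$ is lower semi-continuous at $\overline{x}$, and among $f_1,\dots,f_{m-1}$ all but $f_1$ are Lipschitz, so the inductive hypothesis gives $\partial h(\overline{x})\subset\partial f_1(\overline{x})+\cdots+\partial f_{m-1}(\overline{x})$. Applying the two-function rule to $h+f_m$ (with $f_m$ Lipschitz) and combining yields the general case. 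Thus everything rests on showing, for $f_1$ lower semi-continuous at $\overline{x}$ and $f_2$ Lipschitz around $\overline{x}$, that $\partial(f_1+f_2)(\overline{x})\subset\partial f_1(\overline{x})+\partial f_2(\overline{x})$.

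For the two-function case I would pass through the regular subdifferential and take an outer limit. The key representation, which follows from the definition of $N_{\mathrm{epi}f}$ as the outer limit of the regular normal cones $\widehat{N}_{\mathrm{epi}f}$, is that every $u\in\partial f(\overline{x})$ is a limit $u_k\to u$ of regular subgradients $u_k\in\widehat{\partial}f(x_k)$ along points $x_k\to\overline{x}$ with $f(x_k)\to f(\overline{x})$. So, given $u\in\partial(f_1+f_2)(\overline{x})$, I pick such sequences $x_k\to\overline{x}$, $(f_1+f_2)(x_k)\to(f_1+f_2)(\overline{x})$, and $u_k\in\widehat{\partial}(f_1+f_2)(x_k)$ with $u_k\to u$. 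At each $x_k$ I apply the fuzzy sum rule for regular subdifferentials: for $\varepsilon_k\downarrow 0$ there exist $x_{1,k},x_{2,k}$ within $\varepsilon_k$ of $x_k$ with $f_i(x_{i,k})\to f_i(\overline{x})$ and $u_{i,k}\in\widehat{\partial}f_i(x_{i,k})$ such that $\|u_k-u_{1,k}-u_{2,k}\|\le\varepsilon_k$. Because $f_2$ is Lipschitz with some constant $\ell$ near $\overline{x}$, its regular subgradients obey $\|u_{2,k}\|\le\ell$ for large $k$; hence along a subsequence $u_{2,k}\to u_2$, and then $u_{1,k}=(u_k-u_{2,k})+o(1)\to u-u_2=:u_1$. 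Taking the outer limit in the defining sequences (using lower semi-continuity of $f_1$ and continuity of $f_2$ to keep the value-convergences $f_i(x_{i,k})\to f_i(\overline{x})$) gives $u_2\in\partial f_2(\overline{x})$ and $u_1\in\partial f_1(\overline{x})$, so $u=u_1+u_2\in\partial f_1(\overline{x})+\partial f_2(\overline{x})$.

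The main obstacle is the fuzzy sum rule itself, which is the one genuinely nontrivial ingredient and is established by a variational-principle argument: one minimizes $f_1(\cdot)+f_2(\cdot)-\langle u_k,\cdot\rangle$ together with a smooth localizing penalty, applies a smooth variational principle to split the minimizer, and reads off the two regular subgradients. The second delicate point is the bookkeeping of the attentive convergences $f_i(x_{i,k})\to f_i(\overline{x})$; here the Lipschitz hypothesis does double duty, both controlling the values of $f_2$ by continuity and—crucially—keeping $\{u_{2,k}\}$ bounded so that a convergent subsequence exists. Without Lipschitzness of all but one summand the regular subgradients could escape to infinity, the outer limit would not close up, and the inclusion would fail, which is precisely why the hypothesis is imposed.
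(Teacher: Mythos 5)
The paper itself contains no proof of this lemma: Lemma~2.9 is listed under ``Known facts'' with a pointer to the books of Mordukhovich and Rockafellar--Wets, so there is no in-paper argument to compare yours against step by step. What you have written is the standard proof from that cited literature, and its skeleton is sound: the induction reducing to $m=2$ works (the sum of the lsc summand with the remaining Lipschitz, hence continuous, summands is again lsc, and in the reduced problem all but one summand is Lipschitz); the representation $\partial f(\overline{x})=\Limsup_{x\to\overline{x},\, f(x)\to f(\overline{x})}\widehat{\partial}f(x)$ is the right bridge; and Lipschitzness of $f_2$ with constant $\ell$ gives $\widehat{\partial}f_2(x)\subset \ell\mathbb{B}$ near $\overline{x}$, which is exactly what lets the outer limit close up. Two caveats. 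First, your argument is complete only modulo the fuzzy sum rule for regular subdifferentials, which you leave as a black box; since that is itself a standard theorem in the references the paper cites (proved by a variational-principle argument, as you indicate), this is acceptable for a ``known facts'' lemma, but be aware it carries essentially all the analytic weight of the proof. Second, you must invoke the fuzzy rule in its value-attentive form, i.e.\ with the guarantee $|f_i(x_{i,k})-f_i(x_k)|\le\varepsilon_k$: your parenthetical appeal to ``lower semi-continuity of $f_1$'' is not enough, because lsc only yields the one-sided bound $\liminf_k f_1(x_{1,k})\ge f_1(\overline{x})$, whereas membership $u_1\in\partial f_1(\overline{x})$ requires the full $f_1$-attentive convergence $f_1(x_{1,k})\to f_1(\overline{x})$; with the value control from the fuzzy rule this follows, since $f_1(x_k)\to f_1(\overline{x})$ is obtained by splitting $(f_1+f_2)(x_k)\to(f_1+f_2)(\overline{x})$ using continuity of $f_2$. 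Finally, note that Rockafellar--Wets derive the same inclusion by a genuinely different route, namely normal-cone intersection calculus in which Lipschitzness enters only through $\partial^{\infty}f_2(\overline{x})=\{0\}$; that qualification-based route is in fact the pattern the present paper imitates at infinity in its sum and chain rules, so it would have been the more thematically aligned choice, though your fuzzy-rule argument is equally valid.
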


We also recall the Ekeland variational principle (see \cite{Ekeland1974, Ekeland1979}).

\begin{theorem}\label{Theorem210}
Let $(X, d)$ be a complete metric space, and $f \colon (X, d) \to \overline{\mathbb{R}}$ be a proper, lower semi-continuous and bounded from below function. Let $\epsilon >0$ and $x_0 \in X$ be given such that 
\begin{eqnarray*}
f (x_0) &\le& \inf_{x \in X} f (x) + \epsilon.
\end{eqnarray*}
Then, for any $\lambda >0$ there is a point $x_1 \in X$ satisfying the following conditions
\begin{enumerate}[{\rm (i)}]
\item $f (x_1) \le f(x_0),$
\item $d(x_1, x_0) \le \lambda,$ and
\item $f (x_1) \le  f(x) + \frac{\epsilon}{\lambda}d(x, x_1)$ for all $x \in X.$
\end{enumerate}
\end{theorem}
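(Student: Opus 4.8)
The plan is to prove Ekeland's principle by the classical order-theoretic construction, which is self-contained and uses none of the variational-analytic machinery recalled above. Write $c := \epsilon/\lambda$ to lighten notation, so that the three conclusions become $f(x_1) \le f(x_0)$, $d(x_1,x_0) \le \lambda$, and $f(x_1) \le f(x) + c\,d(x,x_1)$ for all $x \in X$. On $X$ I would introduce the relation
\[
u \preceq v \quad \Longleftrightarrow \quad f(u) + c\, d(u,v) \le f(v).
\]
First I would check that $\preceq$ is a partial order: reflexivity is immediate, transitivity follows from the triangle inequality for $d$ combined with the additivity of the defining inequalities along a chain, and antisymmetry holds because $u \preceq v$ and $v \preceq u$ force $c\,d(u,v) \le 0$, whence $u=v$. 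The theorem then reduces to producing a $\preceq$-minimal element $x_1$ lying below $x_0$. Indeed $x_1 \preceq x_0$ yields (i) at once and yields (ii) after combining $c\,d(x_1,x_0) \le f(x_0)-f(x_1) \le f(x_0)-\inf f \le \epsilon$, while minimality of $x_1$ is precisely the statement that no $x \ne x_1$ satisfies $x \preceq x_1$, which unwinds directly to (iii).

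To build such a minimal element I would iterate a greedy descent. For $x \in X$ set $S(x) := \{u : u \preceq x\}$ (its lower section); note $x \in S(x)$, so it is nonempty, and it is closed by lower semicontinuity of $f$ and continuity of $d$. Starting from $x_0$, having constructed $x_n$ I would choose $x_{n+1} \in S(x_n)$ with $f(x_{n+1}) \le \inf_{u \in S(x_n)} f(u) + \delta_n$ for a fixed sequence $\delta_n \downarrow 0$. Since $x_{n+1} \preceq x_n$, the values $f(x_n)$ are nonincreasing and the defining inequalities telescope, giving $c \sum_n d(x_{n+1},x_n) \le f(x_0) - \inf f < \infty$, where finiteness uses that $f$ is bounded from below. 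Hence $(x_n)$ is Cauchy and, by completeness of $X$, converges to some $x_1 \in X$.

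The heart of the argument is the passage to the limit. Using transitivity I would first note $x_m \preceq x_n$ for all $m \ge n$; letting $m \to \infty$ in $f(x_m) + c\, d(x_m,x_n) \le f(x_n)$ and invoking lower semicontinuity of $f$ (to get $f(x_1) \le \liminf_m f(x_m)$) together with continuity of the metric yields $x_1 \preceq x_n$ for every $n$, in particular $x_1 \preceq x_0$. For minimality, suppose $z \preceq x_1$; transitivity gives $z \in S(x_n)$ for all $n$, hence $f(z) \ge \inf_{S(x_n)} f \ge f(x_{n+1}) - \delta_n$. Letting $n \to \infty$ forces $f(z) \ge f(x_1)$, which against $f(z) + c\,d(z,x_1) \le f(x_1)$ gives $d(z,x_1)=0$, i.e. $z = x_1$. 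Thus $x_1$ is the sought minimal element and (i)--(iii) follow as explained.

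I expect the main obstacle to be precisely this limiting step. One must combine lower semicontinuity for the $f$-terms (where only the inequality in the favorable direction is available) with genuine continuity of the metric, and one must be careful that the greedy choice with $\delta_n \downarrow 0$ is exactly what makes the final minimality argument close: a cruder selection of $x_{n+1}$ would still produce a Cauchy sequence, but its limit need not be $\preceq$-minimal, and conclusion (iii) would fail.
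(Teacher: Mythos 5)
Your proof is correct: the Bishop--Phelps-type ordering $u \preceq v \Leftrightarrow f(u) + \frac{\epsilon}{\lambda} d(u,v) \le f(v)$, the greedy descent with tolerances $\delta_n \downarrow 0$, the telescoping/Cauchy argument, and the lower-semicontinuity passage to the limit all close properly, and the derivation of (i)--(iii) from $x_1 \preceq x_0$ plus $\preceq$-minimality is exactly right (the closedness of the sections $S(x)$, which you mention, is in fact never needed). The paper itself gives no proof of this theorem---it is recalled as a known result with citations to Ekeland's papers---and your argument is precisely the classical one from those sources, so there is nothing to compare beyond noting that your write-up is a complete and correct rendition of it.
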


\section{Normals, coderivatives and subdifferentials at infinity}\label{Section3}

\subsection{Normal cones at infinity}

In our geometric approach to coderivatives, we start with constructing normal cones to nonempty subsets of finite-dimensional spaces. 
To see this, let $\Omega$ be a subset of $\mathbb{R}^n \times \mathbb{R}^m$ and let $\overline{y} \in \mathbb{R}^m.$ 
Suppose that $\Omega$ is {\em locally closed} at $(\infty, \overline{y}),$ i.e., there are a constant $R > 0$ and a closed neighborhood $V$ of $\overline{y}$ in $\mathbb{R}^m$ such that the set
\begin{eqnarray*}
\{(x, y) \in \mathbb{R}^n \times \mathbb{R}^m \mid \|x\| \ge R \textrm{ and } y \in V\}
\end{eqnarray*}
is closed. Obviously, if the set $\Omega$ is closed then it is locally closed at $(\infty, \overline{y})$ for every $\overline{y} \in \mathbb{R}^m.$

The following definition is inspired by the works of Mordukhovich \cite{Mordukhovich2006, Mordukhovich2018}; see also the recent papers \cite{PHAMTS2023-4, PHAMTS2023-5}.

\begin{definition}{\rm 
The {\em normal cone to the set $\Omega$ at $(\infty, \overline{y})$} is defined by
\begin{eqnarray*}
N_{\Omega}(\infty, \overline{y}) &:=& \Limsup_{(x, y) \xrightarrow{\Omega} (\infty, \overline{y})} \widehat{N}_{\Omega}(x, y),
\end{eqnarray*}
where ${(x, y) \xrightarrow{\Omega} (\infty, \overline{y})} $ means that $\|x\| \rightarrow \infty$ and $y \rightarrow \overline{y}$ with $(x, y) \in \Omega.$ 
}\end{definition}

By definition, it is not hard to see that $N_{\Omega}(\infty, \overline{y})$ is a (possibly empty) closed cone.
\begin{example}{\rm 
(i) Let $\Omega :=\{(x,y) \in \mathbb{R}^2 \mid x \geq y \}.$ For $(x, y) \in \Omega$ we have 
\begin{eqnarray*}
\widehat{N}_{\Omega}(x,y) &=& N_{\Omega}(x,y) \ = \ 
\begin{cases}
\{(0,0)\} & \textrm{ if } x > y, \\
\{(t, -t) \mid t \geq 0\} & \textrm{ if } x=y.
\end{cases}
\end{eqnarray*}
Therefore, for any $\overline{y} \in \mathbb{R}$ we have 
\begin{eqnarray*}
N_{\Omega}(\infty, \overline{y}) &=& \Limsup_{(x,y) \xrightarrow{\Omega} (\infty,\overline{y})} \widehat{N}_{\Omega}(x,y) \ = \ \{(0,0)\}.
\end{eqnarray*}

(ii) Let $\Omega :=\{(x,y) \in \mathbb{R}^2 \mid y \geq e^{x}\}.$ For $(x,y) \in \Omega$ we have 
\begin{eqnarray*}
\widehat{N}_{\Omega}(x,y) &=& N_{\Omega}(x,y) \ = \ 
\begin{cases}
\{(0,0)\} & \textrm{ if } y >e^{x}, \\
\{(te^{x}, -t) \mid t \geq 0\} & \textrm{ if } y=e^{x}.
\end{cases}
\end{eqnarray*}
Therefore, for $\overline{y} \in \mathbb{R}$ we have 
\begin{eqnarray*}
N_{\Omega}(\infty,\overline{y}) &=& \Limsup_{(x,y) \xrightarrow{\Omega} (\infty,\overline{y})} \widehat{N}_{\Omega}(x,y) \ = \
\begin{cases}
\emptyset & \textrm{ if } \overline{y}<0, \\
\{0\}\times (-\infty, 0] & \textrm{ if } \overline{y}=0,\\
\{(0,0)\}& \textrm{ if } \overline{y}>0. 
\end{cases}
\end{eqnarray*}
}\end{example}

\begin{proposition}\label{Prop33}
The following equality holds:
\begin{eqnarray*}
N_{\Omega}(\infty, \overline{y}) &=& \Limsup_{(x, y) \xrightarrow{\Omega} (\infty, \overline{y})} N_{\Omega}(x, y).
\end{eqnarray*}
\end{proposition}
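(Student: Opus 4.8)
The plan is to establish the two inclusions separately, the content residing entirely in showing that passing to the outer limit of the (generally larger) limiting normal cones $N_{\Omega}(x,y)$ cannot produce anything beyond the outer limit of the regular normal cones $\widehat{N}_{\Omega}(x,y)$ that \emph{defines} $N_{\Omega}(\infty, \overline{y})$. First I would record the trivial inclusion: since $\widehat{N}_{\Omega}(x, y) \subset N_{\Omega}(x, y)$ for every $(x,y) \in \Omega$, monotonicity of the Painlev\'e--Kuratowski outer limit immediately yields
\[
N_{\Omega}(\infty, \overline{y}) = \Limsup_{(x, y) \xrightarrow{\Omega} (\infty, \overline{y})} \widehat{N}_{\Omega}(x, y) \ \subset \ \Limsup_{(x, y) \xrightarrow{\Omega} (\infty, \overline{y})} N_{\Omega}(x, y).
\]

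The reverse inclusion is the heart of the matter and is proved by a diagonal extraction. I would take $\xi$ in the right-hand side, so that there are sequences $(x_k, y_k) \in \Omega$ with $\|x_k\| \to \infty$ and $y_k \to \overline{y}$, together with $\xi_k \in N_{\Omega}(x_k, y_k)$ and $\xi_k \to \xi$. The local closedness of $\Omega$ at $(\infty, \overline{y})$ furnishes a radius $R > 0$ and a closed neighborhood $V$ of $\overline{y}$ such that $\Omega \cap \{\|x\| \ge R,\ y \in V\}$ is closed. Discarding finitely many indices, I may assume that for every $k$ the point $(x_k, y_k)$ together with its closed $1/k$-neighborhood lies inside $\{\|x\| > R,\ y \in \mathrm{int}\, V\}$; consequently $\Omega$ is locally closed at each $(x_k, y_k)$, so the ordinary formula $N_{\Omega}(x_k, y_k) = \Limsup_{(x',y') \xrightarrow{\Omega} (x_k, y_k)} \widehat{N}_{\Omega}(x', y')$ is valid there.

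Using this formula, for each $k$ I would choose a point $(\widetilde{x}_k, \widetilde{y}_k) \in \Omega$ and a regular normal $\widetilde{\xi}_k \in \widehat{N}_{\Omega}(\widetilde{x}_k, \widetilde{y}_k)$ with $\|(\widetilde{x}_k, \widetilde{y}_k) - (x_k, y_k)\| < 1/k$ and $\|\widetilde{\xi}_k - \xi_k\| < 1/k$. The triangle inequalities $\|\widetilde{x}_k\| \ge \|x_k\| - 1/k$, $\|\widetilde{y}_k - \overline{y}\| \le \|y_k - \overline{y}\| + 1/k$, and $\|\widetilde{\xi}_k - \xi\| \le \|\xi_k - \xi\| + 1/k$ then give $\|\widetilde{x}_k\| \to \infty$, $\widetilde{y}_k \to \overline{y}$, and $\widetilde{\xi}_k \to \xi$. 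Since $\widetilde{\xi}_k \in \widehat{N}_{\Omega}(\widetilde{x}_k, \widetilde{y}_k)$ with $(\widetilde{x}_k, \widetilde{y}_k) \xrightarrow{\Omega} (\infty, \overline{y})$, this exhibits $\xi$ as a limit of regular normals along a sequence tending to $(\infty, \overline{y})$ in $\Omega$, i.e. $\xi \in N_{\Omega}(\infty, \overline{y})$, which finishes the argument.

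The only delicate point — and the step I expect to be the main obstacle — is the bookkeeping at infinity that replaces the purely local reasoning of the classical statement: I must verify that the $1/k$-perturbation in the diagonal extraction neither destroys the divergence $\|x_k\| \to \infty$ (controlled by the reverse triangle inequality) nor pushes the points out of the region where $\Omega$ is locally closed (controlled by trimming finitely many indices so that $(x_k, y_k)$ and its $1/k$-neighborhood sit in $\{\|x\| > R,\ y \in \mathrm{int}\, V\}$). Once this is secured, the remainder is the standard verification that iterating the outer-limit operation does not enlarge the resulting cone.
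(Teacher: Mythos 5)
Your proof is correct and follows essentially the same route as the paper's: the trivial inclusion from $\widehat{N}_{\Omega}(x,y) \subset N_{\Omega}(x,y)$ plus monotonicity of the outer limit, and the converse by a $1/k$ diagonal extraction of regular normals approximating each $\xi_k \in N_{\Omega}(x_k, y_k)$. Your only deviation is the local-closedness bookkeeping, which is superfluous here: the paper \emph{defines} $N_{\Omega}(x,y)$ as $\Limsup_{(x',y') \xrightarrow{\Omega} (x,y)} \widehat{N}_{\Omega}(x',y')$, so the approximating pairs $(\widetilde{x}_k, \widetilde{y}_k, \widetilde{\xi}_k)$ exist by definition at every point of $\Omega$, with no need to trim indices or verify that $\Omega$ is locally closed near $(x_k, y_k)$.
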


\begin{proof}
Indeed, we know that $\widehat{N}_{\Omega}(x, y) \subset N_{\Omega}(x, y)$ for all $(x, y) \in \mathbb{R}^n \times \mathbb{R}^m.$ By definition, hence
\begin{eqnarray*}
N_{\Omega }(\infty, \overline{y})  &=& \Limsup_ {(x, y) \xrightarrow{\Omega} (\infty, \overline{y})}  \widehat{N}_{\Omega}(x, y) \ \subset \ \Limsup_{(x, y) \xrightarrow{\Omega} (\infty, \overline{y})}  N_{\Omega}(x, y).
\end{eqnarray*}
For the converse, take any $\xi \in \Limsup_{(x, y) \xrightarrow{\Omega} (\infty, \overline{y})}  N_{\Omega}(x, y),$ which means that there exist sequences $(x_k, y_k) \in \Omega$ and $\xi_k \in N_{\Omega}(x_k, y_k) $ such that $(x_k, y_k) \to (\infty, \overline{y})$ and $\xi_k \to \xi.$ By definition, for each $k > 0,$ there are $(x_k', y_k') \in \Omega$ and $\xi_k' \in \widehat{N}_{\Omega}(x_k', y_k') $ such that $\|(x_k', y_k') - (x_k, y_k)\| < \frac{1}{k}$ and $\|\xi_k' - \xi_k \| < \frac{1}{k}.$ Clearly, $(x_k', y_k') \xrightarrow{\Omega} (\infty, \overline{y})$ and $\xi_k' \to \xi$ as $k \to \infty.$ By definition, therefore $\xi \in {N}_{\Omega}(\infty, \overline{y}).$
\end{proof}

\begin{proposition}
Let $\Omega_1, \Omega_2 \subset \mathbb{R}^n \times \mathbb{R}^m$ be locally closed at $(\infty, \overline{y})$ such that
\begin{eqnarray*}
N_{\Omega_1}(\infty, \overline{y}) \cap \big (-N_{\Omega_2}(\infty, \overline{y}) \big) &=& \{0\}.
\end{eqnarray*}
Then $\Omega_1 \cap \Omega_2$ is locally closed at $(\infty, \overline{y}),$ and
\begin{eqnarray*}
N_{\Omega_1 \cap \Omega_2}(\infty, \overline{y})  &\subset& N_{\Omega_1}(\infty, \overline{y}) + N_{\Omega_2}(\infty, \overline{y}).
\end{eqnarray*}
\end{proposition}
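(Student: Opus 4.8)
My plan is to treat the two assertions separately, since the qualification condition is needed only for the inclusion. Local closedness of $\Omega_1 \cap \Omega_2$ at $(\infty, \overline{y})$ is a purely set-theoretic matter: if $(R_i, V_i)$ witnesses the local closedness of $\Omega_i$, then with $R := \max\{R_1, R_2\}$ and $V := V_1 \cap V_2$ (a closed neighborhood of $\overline{y}$) the region $W := \{(x,y) \mid \|x\| \ge R,\ y \in V\}$ is contained in both $\{\|x\| \ge R_i,\ y \in V_i\}$, so each $\Omega_i \cap W$ is closed, and therefore $(\Omega_1 \cap \Omega_2) \cap W = (\Omega_1 \cap W) \cap (\Omega_2 \cap W)$ is closed. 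Note that no qualification is used here.

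For the inclusion I would transplant the classical intersection rule for limiting normal cones to the setting at infinity. Starting from $\xi \in N_{\Omega_1 \cap \Omega_2}(\infty, \overline{y})$, the definition produces sequences $(x_k, y_k) \xrightarrow{\Omega_1 \cap \Omega_2} (\infty, \overline{y})$ and $\widehat{\xi}_k \in \widehat{N}_{\Omega_1 \cap \Omega_2}(x_k, y_k)$ with $\widehat{\xi}_k \to \xi$. For all large $k$ the point $(x_k, y_k)$ lies in the region $W$ above, where both $\Omega_1$ and $\Omega_2$ are closed; hence each $\Omega_i$ is locally closed at $(x_k, y_k)$ in the ordinary finite sense, and the classical fuzzy intersection rule for Fr\'echet normals applies at $(x_k,y_k)$.

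Concretely, at each $(x_k, y_k)$ the fuzzy rule supplies points $(x_{ik}, y_{ik}) \in \Omega_i$ with $\|(x_{ik}, y_{ik}) - (x_k, y_k)\| < 1/k$ and Fr\'echet normals $\widehat{\xi}_{ik} \in \widehat{N}_{\Omega_i}(x_{ik}, y_{ik})$, $i = 1, 2$, satisfying $\|\widehat{\xi}_k - (\widehat{\xi}_{1k} + \widehat{\xi}_{2k})\| < 1/k$. The one genuinely new verification is that these approximating points still run off to infinity while staying near $\overline{y}$: since $\|x_{ik}\| \ge \|x_k\| - 1/k \to \infty$ and $y_{ik} \to \overline{y}$, we indeed have $(x_{ik}, y_{ik}) \xrightarrow{\Omega_i} (\infty, \overline{y})$, which is exactly what lets me feed the resulting normals back into the definition of $N_{\Omega_i}(\infty, \overline{y})$ rather than into an ordinary normal cone at a finite point.

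The main obstacle is boundedness of $\{\widehat{\xi}_{1k}\}$ and $\{\widehat{\xi}_{2k}\}$, and this is precisely where the qualification condition enters. If $t_k := \|\widehat{\xi}_{1k}\| + \|\widehat{\xi}_{2k}\| \to \infty$ along a subsequence, then since $\widehat{N}_{\Omega_i}$ is a cone we have $\widehat{\xi}_{ik}/t_k \in \widehat{N}_{\Omega_i}(x_{ik}, y_{ik})$; passing to convergent subsequences $\widehat{\xi}_{ik}/t_k \to \eta_i$ gives $\eta_i \in N_{\Omega_i}(\infty, \overline{y})$, while $\widehat{\xi}_{1k} + \widehat{\xi}_{2k} \to \xi$ forces $\eta_1 + \eta_2 = 0$ and $\|\eta_1\| + \|\eta_2\| = 1$. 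Then $\eta_1 = -\eta_2 \in N_{\Omega_1}(\infty, \overline{y}) \cap (-N_{\Omega_2}(\infty, \overline{y})) = \{0\}$, a contradiction. Hence $\{t_k\}$ is bounded, and extracting convergent subsequences $\widehat{\xi}_{ik} \to \zeta_i \in N_{\Omega_i}(\infty, \overline{y})$ yields $\zeta_1 + \zeta_2 = \xi$, i.e. $\xi \in N_{\Omega_1}(\infty, \overline{y}) + N_{\Omega_2}(\infty, \overline{y})$, as required.
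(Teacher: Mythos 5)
Your proof is correct, and it takes a genuinely different route from the paper's. The paper never touches Fr\'echet normals at nearby points directly: it first \emph{propagates} the hypothesis at infinity down to ordinary points, showing by a compactness/contradiction argument that $N_{\Omega_1}(x,y)\cap\bigl(-N_{\Omega_2}(x,y)\bigr)=\{0\}$ for all $(x,y)\in\Omega_1\cap\Omega_2$ with $\|x\|$ large and $y$ near $\overline{y}$; this licenses the \emph{exact} intersection rule for limiting normals (Mordukhovich 2018, Theorem~2.16) at each such point, and the paper then passes to the limit via its Proposition~3.3 (the representation of $N_{\Omega_1\cap\Omega_2}(\infty,\overline{y})$ as an outer limit of limiting normal cones), finishing with the same bounded/unbounded dichotomy you use. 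You instead stay with the defining Fr\'echet normals and replace both the propagation lemma and the exact rule by the \emph{fuzzy} intersection rule, which needs no pointwise qualification, so the hypothesis at infinity is used exactly once (to bound the approximating normals) rather than twice. Your route is leaner; the paper's buys an off-the-shelf exact theorem at the cost of an extra lemma. Your observation that local closedness of $\Omega_1\cap\Omega_2$ needs no qualification is also correct and is implicit in the paper.

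One caveat you should address: the fuzzy intersection rule is standardly quoted (e.g.\ Mordukhovich 2006, Lemma~3.1, in Asplund spaces) \emph{with a multiplier}, i.e.\ one only gets $\lambda_k\in[0,1]$ and $\|\lambda_k\widehat{\xi}_k-\widehat{\xi}_{1k}-\widehat{\xi}_{2k}\|<1/k$ under a normalization such as $\lambda_k+\|\widehat{\xi}_{1k}\|+\|\widehat{\xi}_{2k}\|=1$, and the degenerate case $\lambda_k=0$ cannot be excluded pointwise. The multiplier-free form you assert is in fact true in $\mathbb{R}^n$ (it follows from the finite-dimensional unconditional fuzzy sum rule for lsc functions, proved by decoupling with a quadratic penalty and compactness of closed balls), but since that is not how the result is most commonly stated, you should either cite the finite-dimensional version explicitly or run the multiplier version. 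Reassuringly, your argument is robust either way: if $\lambda_k\to 0$ along a subsequence, then $\widehat{\xi}_{1k}+\widehat{\xi}_{2k}\to 0$ while $\|\widehat{\xi}_{1k}\|+\|\widehat{\xi}_{2k}\|$ stays normalized, and passing to limits produces $\eta_1=-\eta_2\neq 0$ with $\eta_i\in N_{\Omega_i}(\infty,\overline{y})$ --- exactly the contradiction with the qualification at infinity that you already deploy for unbounded $t_k$.
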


\begin{proof}
We first show that there exist a constant $R > 0$ and a neighborhood $V$ of $\overline{y}$ in $\mathbb{R}^m$ such that
\begin{eqnarray*}
N_{\Omega_1}(x, y) \cap \big(-N_{\Omega_2}(x, y) \big) &=& \{0\}
\end{eqnarray*}
for all $(x, y) \in \Omega_1 \cap \Omega_2$ with $\|x\| \ge R$ and $y \in V.$

Indeed, if this were not true, there would exist sequences $(x_k, y_k) \in \Omega_1 \cap \Omega_2$ and $\xi_k \in N_{\Omega_1}(x_k, y_k) \cap (-N_{\Omega_2}(x_k, y_k)$ such that $\|x_k\| \to \infty, y_k \to \overline{y}$ and $\xi_k \ne 0.$ Since $N_{\Omega_1}(x_k, y_k)$ and $N_{\Omega_2}(x_k, y_k)$ are cones, we can assume that $\|\xi_k\| = 1$ for all $k.$ Passing to a subsequence if necessary we may assume that the sequence $\xi_k$ converges to some $\xi.$ Clearly, $\|\xi \| = 1$ and  $\xi \in N_{\Omega_1}(\infty, \overline{y}) \cap \big(-N_{\Omega_2}(\infty, \overline{y}) \big),$ in contradiction to our assumption.

We now prove the desired inclusion. To this end, take any $\xi \in N_{\Omega_1 \cap \Omega_2}(\infty, \overline{y}).$ By Proposition~\ref{Prop33}, there exist sequences $x_k \in \Omega_1 \cap \Omega_2$ and $\xi_k \in N_{\Omega_1 \cap \Omega_2}(x_k, y_k)$ such that $\|x_k\| \to \infty, y_k \to \overline{y}$ and $\xi_k \to \xi.$ Then for all $k$ sufficiently large, we have
\begin{eqnarray*}
N_{\Omega_1}(x_k, y_k) \cap \big(-N_{\Omega_2}(x_k, y_k) \big) &=& \{0\}.
\end{eqnarray*}
It follows from \cite[Theorem~2.16]{Mordukhovich2018} that
\begin{eqnarray*}
N_{\Omega_1 \cap \Omega_2}(x_k, y_k)  &\subset& N_{\Omega_1}(x_k, y_k) + N_{\Omega_2}(x_k, y_k).
\end{eqnarray*}
Consequently, we can write $\xi_k = u_k + v_k$ for some $u_k \in N_{\Omega_1}(x_k, y_k)$ and $v_k \in N_{\Omega_2}(x_k, y_k).$ There are two cases to be considered.

\subsubsection*{Case 1: the sequence $u_k$ is bounded.} 
Then the sequence $v_k$ is bounded too. Passing to subsequences if necessary we may assume that $u_k$ and $v_k$ converge to some $u$ and $v,$ respectively. Clearly, $\xi = u + v,$ and by Proposition~\ref{Prop33}, we have $u \in N_{\Omega_1}(\infty, \overline{y}) $ and $v \in N_{\Omega_2}(\infty, \overline{y}).$ 
Therefore $\xi  \in  N_{\Omega_1}(\infty, \overline{y})  +  N_{\Omega_2}(\infty, \overline{y}).$

\subsubsection*{Case 2: the sequence $u_k$ is unbounded.} 
Then the sequence $v_k$ is unbounded too. Since $u_k + v_k$ is a convergent sequence, it is easy to see that the sequence $\frac{u_k}{\|v_k\|}$ is bounded. Passing to subsequences if necessary we may assume that the sequences $\frac{u_k}{\|v_k\|}$ and $\frac{v_k}{\|v_k\|}$ converge to some $u$ and $v,$ respectively. Clearly, $u \in  N_{\Omega_1}(\infty, \overline{y}), v \in N_{\Omega_2}(\infty, \overline{y})$ and $u = - v \ne 0,$ in contradiction to our assumption.
\end{proof}

\subsection{Coderivatives at infinity}
In this subsection, the {\em coderivatives at infinity} of set-valued mappings will be defined in terms of the normal cones at infinity to the graphs of the mappings in question. Namely, let $F\colon \mathbb{R}^n\rightrightarrows \mathbb{R}^m$ be a set-valued mapping. We will associate with $F$ the {\em Jelonek set}
\begin{eqnarray*}
J(F) &:=& \{y \in \mathbb{R}^m \mid \exists (x_k, y_k)  \xrightarrow{\mathrm{gph} F} (\infty, y) \}.
\end{eqnarray*}
Let $\overline{y} \in J(F),$ and suppose in this subsection that $\mathrm{gph}F$ is locally closed at $(\infty, \overline{y}).$
 
\begin{definition}{\rm 
{\em The coderivative} of $F$ at $(\infty, \overline{y})$ is the set-valued mapping $D^*F(\infty, \overline{y})\colon \mathbb{R}^m \rightrightarrows \mathbb{R}^n$ defined by
\begin{eqnarray*}
D^*F(\infty, \overline{y})(v) &:=& \{u\in\mathbb{R}^n\;|\; (u,-v)\in N_{\mathrm{gph} F}(\infty, \overline{y})\} \quad \textrm{ for } \ v \in \mathbb{R}^m.
\end{eqnarray*}
}\end{definition}

By definition, it is easy to see that the coderivative of $F$ at $(\infty, \overline{y})$ is a closed, positively homogeneous mapping.

\begin{example}\label{VD36} {\rm 
(i) Consider the set-valued mapping $F \colon \mathbb{R} \rightrightarrows \mathbb{R}$ defined by
\begin{eqnarray*}
F(x) &:=&
\begin{cases}
\{1\} & \textrm{ if } x > 0, \\
[-1, 1] & \textrm{ if } x = 0, \\
\{-1\} & \textrm{ if } x < 0, \\
\end{cases}
\end{eqnarray*}
A direct calculation shows that $J(F)=\{-1, 1\},$ and for $\overline{y} \in J(F)$ we have
\begin{eqnarray*}
N_{\mathrm{gph} F}(\infty, \overline{y}) &=&  \{(0, v) \mid v \in \mathbb{R}\},
\end{eqnarray*}
which yields $D^*F(\infty, \overline{y})(v) =\{0\}$ for all $v \in \mathbb{R}.$

(ii) Consider the set-valued mapping $F \colon \mathbb{R} \rightrightarrows \mathbb{R}$ defined by
\begin{eqnarray*}
F(x) &:=& 
\begin{cases}
[0, x] & \textrm{ if } x \ge 0, \\
x^2 & \textrm{ otherwise.}
\end{cases}
\end{eqnarray*}
A direct calculation shows that $J(F) = [0, +\infty)$ and 
\begin{eqnarray*}
N_{\mathrm{gph} F}(\infty, \overline{y}) &=& 
\begin{cases}
\{(0, v)\mid v \le 0\} & \textrm{ if } \overline{y} = 0, \\
\{0, 0\} & \textrm{ if } \overline{y} > 0. 
\end{cases}
\end{eqnarray*}
Hence
\begin{eqnarray*}
D^*F(\infty, 0)(v) &=& 
\begin{cases}
\{0\} & \textrm{ if } v \ge 0, \\
\emptyset & \textrm{ otherwise,}
\end{cases}
\end{eqnarray*}
and if $\overline{y} > 0,$ then
\begin{eqnarray*}
D^*F(\infty, \overline{y})(v) &=& 
\begin{cases}
\{0\} & \textrm{ if } v = 0, \\
\emptyset & \textrm{ otherwise.} 
\end{cases}
\end{eqnarray*}

(iii) Consider the set-valued mapping $F \colon \mathbb{R}^2 \rightrightarrows \mathbb{R}$ defined by
$F(x_1,x_2) := \{x_1\}.$ A direct calculation shows that $J(F) = \mathbb{R},$ and for $\overline{y} \in J(F)$ we have
\begin{eqnarray*}
N_{\mathrm{gph} F}(\infty, \overline{y}) &=& \{ (v, 0, -v) \mid v \in \mathbb{R} \},
\end{eqnarray*}
which yields $D^*F(\infty, \overline{y})(v) = \{(v,0)\}$ for all $v \in \mathbb{R}.$

(iv) Consider an unbounded closed set $\Omega \subset \mathbb{R}^n$ and define the {\em indicator mapping} $\Delta_{\Omega} \colon \mathbb{R}^n \rightrightarrows \mathbb{R}^m$ of $\Omega$ by
\begin{eqnarray*}
\Delta_{\Omega} (x) &:=&
\begin{cases}
\{0\} &\textrm{ if } x \in \Omega,\\
\emptyset &\textrm{ otherwise.}
\end{cases}
\end{eqnarray*}
By definition, we have $J(\Delta_{\Omega}) = \{0\}$ and $\mathrm{gph}\Delta_{\Omega} = \Omega \times \{0\}.$ Hence
\begin{eqnarray*}
N_{\mathrm{gph} \Delta_{\Omega}}(\infty, 0) &=& \Limsup_{(x, y)\stackrel{\mathrm{gph} \Delta_{\Omega}}\longrightarrow(\infty, 0)}\widehat{N}_{\mathrm{gph} \Delta_{\Omega}}(x,y)\\
&=& \Limsup_{(x, y)\stackrel{\mathrm{gph} \Delta_{\Omega}}\longrightarrow(\infty, 0)}\widehat{N}_{\Omega}(x) \times N_{\{0\}}(0)\\
&=& \Limsup_{x \xrightarrow{\Omega} \infty} \widehat{N}_{\Omega}(x) \times \mathbb{R}^m,
\end{eqnarray*}
which yields
\begin{eqnarray*}
D^* \Delta_{\Omega}(\infty, 0)(v) &=& \Limsup_{x \xrightarrow{\Omega} \infty} \widehat{N}_{\Omega}(x) \quad \textrm{ for all } \quad v \in \mathbb{R}^m.
\end{eqnarray*}
}\end{example}

Let us present the following property, which easily follows from Proposition~\ref{Prop33}.

\begin{proposition}\label{Prop37}
Let $\overline{y} \in J(F).$ For all ${v} \in \mathbb{R}^m$ one has
\begin{eqnarray*}
D^*F(\infty, \overline{y})({v}) &=& \mathop{\mathop {\Limsup}\limits_{(x, y)\xrightarrow{\mathrm{gph} F}(\infty, \overline{y})} }
\limits_{v'  \to {v}} {D^*F(x, y)(v')}.
\end{eqnarray*}
\end{proposition}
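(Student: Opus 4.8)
The plan is to reduce the claim directly to Proposition~\ref{Prop33} applied to the set $\Omega = \mathrm{gph} F \subset \mathbb{R}^n \times \mathbb{R}^m$, exploiting the purely formal definition of the coderivative as a ``slice'' of the normal cone. First I would unfold both sides in terms of normal cones to $\mathrm{gph} F$. By definition, $u \in D^*F(\infty, \overline{y})(v)$ holds precisely when $(u, -v) \in N_{\mathrm{gph} F}(\infty, \overline{y})$, so the left-hand side is completely governed by membership of $(u,-v)$ in the normal cone at infinity.

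Next I would rewrite the right-hand side. By the meaning of the double outer limit, $u$ lies in it exactly when there are sequences $(x_k, y_k) \xrightarrow{\mathrm{gph} F} (\infty, \overline{y})$, $v_k \to v$ and $u_k \to u$ with $u_k \in D^*F(x_k, y_k)(v_k)$; the last inclusion is, again by definition, equivalent to $(u_k, -v_k) \in N_{\mathrm{gph} F}(x_k, y_k)$. Since $(u_k, -v_k) \to (u, -v)$, this says precisely that $(u, -v) \in \Limsup_{(x,y) \xrightarrow{\mathrm{gph} F} (\infty, \overline{y})} N_{\mathrm{gph} F}(x, y)$; conversely, any such limit point supplies sequences $a_k \to u$ and $b_k \to -v$ with $(a_k, b_k) \in N_{\mathrm{gph} F}(x_k, y_k)$, and setting $u_k := a_k$, $v_k := -b_k$ recovers exactly the required data. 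Thus the right-hand side equals $\{u \mid (u,-v) \in \Limsup_{(x,y) \xrightarrow{\mathrm{gph} F} (\infty, \overline{y})} N_{\mathrm{gph} F}(x,y)\}$.

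Finally, Proposition~\ref{Prop33} (with $\Omega = \mathrm{gph} F$, which is locally closed at $(\infty, \overline{y})$ by the standing hypothesis of this subsection) gives the identity $N_{\mathrm{gph} F}(\infty, \overline{y}) = \Limsup_{(x,y) \xrightarrow{\mathrm{gph} F} (\infty, \overline{y})} N_{\mathrm{gph} F}(x, y)$. Feeding this into the two descriptions above shows that both sides consist of exactly those $u$ for which $(u,-v)$ lies in this common normal cone, and the claimed equality follows.

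I expect no serious obstacle here: the mathematical content is entirely carried by Proposition~\ref{Prop33}, and the only point requiring care is the bookkeeping that merges the two independent limiting processes in $v'$ and in $(x,y)$ into the single Painlev\'e--Kuratowski outer limit of the vectors $(u_k, -v_k)$ --- that is, recognizing that letting $v' \to v$ while simultaneously moving the base point $(x,y)$ toward $(\infty, \overline{y})$ is the same as requiring the normal vectors $(u_k, -v_k)$ to converge to $(u, -v)$.
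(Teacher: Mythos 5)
Your proof is correct and is exactly the argument the paper intends: the paper itself gives no proof beyond the remark that the statement ``easily follows from Proposition~\ref{Prop33},'' and your reduction --- unfolding both sides into the condition $(u,-v)\in N_{\mathrm{gph}\,F}$ and invoking Proposition~\ref{Prop33} with $\Omega=\mathrm{gph}\,F$ --- is precisely that omitted routine verification. The one point needing care, merging the two limiting processes $v'\to v$ and $(x,y)\xrightarrow{\mathrm{gph}\,F}(\infty,\overline{y})$ into the single outer limit of the pairs $(u_k,-v_k)$, is handled correctly in both directions.
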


Next we present some calculus rules for coderivatives of set-valued mappings.
Recall that the sum of two set-valued mappings $F_1, F_2 \colon \mathbb{R}^n \rightrightarrows {\mathbb{R}^m}$ is defined by
\begin{eqnarray*}
(F_1+F_2)(x) &:=& \{ y_1+y_2 \mid y_1 \in F_1(x), y_2 \in F_2(x)\} \quad \textrm{ for all } \; x\in \mathbb{R}^n.
\end{eqnarray*}
For each $y \in \mathbb{R}^m,$ we let
\begin{eqnarray*}
J^+(F_1 + F_2, y) \ := \ \{ (y_1,y_2)\in \mathbb{R}^{m} \times \mathbb{R}^m &\mid & \exists x_k \rightarrow \infty, \
\exists  y_{1, k} \in F_1(x_k), \ \exists  y_{2, k} \in F_2(x_k), \\
&& (y_{1, k},y_{2, k})\to (y_1,y_2), \ y_1+y_2 = y\}. 
\end{eqnarray*}

\begin{theorem}[sum rule] \label{SumeRule}
Suppose $F := F_1 + F_2$ for closed-graph set-valued mappings $F_1, F_2 \colon \mathbb{R}^n \rightrightarrows \mathbb{R}^m$ and let $\overline{y} \in J(F).$ Assume both of the following conditions are satisfied.
\begin{enumerate}[{\rm (a)}]
\item {\rm (boundedness condition):} There exist constants $R > 0, \rho > 0$ and a neighborhood $V$ of $\overline{y}$ such that whenever $\|x\|  > R,$ $y_i \in F_i(x)$ and $y_1 + y_2 \in V,$ one has $\|y_i\| < \rho$ for $i = 1, 2.$

\item {\rm (constraint qualification):} For every $(y_1,y_2)\in J^+(F_1+F_2,\overline{y})$ we have
\begin{eqnarray*}
D^*F_1(\infty , y_1) (0)\cap \big(-D^*F_2(\infty , y_2)(0) \big) &=& \{0\}.
\end{eqnarray*}

\end{enumerate}
Then $\mathrm{gph}F$ is locally closed at $(\infty, \overline{y}),$ and for all $v \in \mathbb{R}^m$ we have
\begin{eqnarray*}
D^*F (\infty, \overline{y})(v) &\subset& \bigcup_{(\overline{y}_1,\overline{y}_2) \in J^+(F_1+F_2,\overline{y})}  D^*F_1(\infty , \overline{y}_1) (v)+ D^*F_2(\infty , \overline{y}_2)(v). 
\end{eqnarray*}
\end{theorem}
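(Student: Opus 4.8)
The plan is to first upgrade the boundedness condition (a) into local closedness of $\mathrm{gph} F$, and then to derive the coderivative inclusion by a sequential argument that reduces matters, at each finite stage, to the classical finite-point coderivative sum rule and then passes to the limit through the outer-limit description of the normal cone at infinity in Proposition~\ref{Prop33}. Throughout, condition (a) serves as an inner-semicompactness property (both at infinity and, uniformly, at the finite points $x_k$), while condition (b) is the qualification that must be transported from infinity to those finite points. For local closedness I would fix $R,\rho,V$ as in (a) with $V$ closed, pick $R' > R$, and take any sequence $(x_k,y_k)\in\mathrm{gph} F$ with $\|x_k\|\ge R'$ and $y_k\in V$ converging to a finite limit $(x^*,y^*)$. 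Writing $y_k = y_{1,k}+y_{2,k}$ with $y_{i,k}\in F_i(x_k)$, condition (a) bounds the summands, so after passing to a subsequence $y_{i,k}\to y_i^*$, and closedness of each $\mathrm{gph} F_i$ gives $y_i^*\in F_i(x^*)$, whence $y^*\in F(x^*)$. This shows $\mathrm{gph} F$ is locally closed at $(\infty,\overline{y})$, so that $D^*F(\infty,\overline{y})$ is well defined.

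Next I would transfer the qualification condition to finite points: for all $k$ large and every decomposition $y_k=y_1+y_2$ with $y_i\in F_i(x_k)$, one has $D^*F_1(x_k,y_1)(0)\cap\big(-D^*F_2(x_k,y_2)(0)\big)=\{0\}$. This follows by contradiction: a failure along a subsequence yields, by positive homogeneity, unit vectors $w_k$ in the intersection; bounding the summands via (a), extracting limits $y_{i,k}\to\overline{y}_i$ (so that $(\overline{y}_1,\overline{y}_2)\in J^+(F_1+F_2,\overline{y})$) and $w_k\to w$, and applying Proposition~\ref{Prop33} place $w$ in $D^*F_1(\infty,\overline{y}_1)(0)\cap\big(-D^*F_2(\infty,\overline{y}_2)(0)\big)$ with $\|w\|=1$, contradicting (b). Since (a) also makes the decomposition set $\{(y_1,y_2): y_i\in F_i(x_k),\ y_1+y_2=y_k\}$ bounded and closed, hence compact, the finite coderivative sum rule \cite{Mordukhovich2018} is applicable at each such $(x_k,y_k)$.

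The core step is then the limiting argument. I would take $u\in D^*F(\infty,\overline{y})(v)$, use Proposition~\ref{Prop33} to produce $(x_k,y_k)\xrightarrow{\mathrm{gph} F}(\infty,\overline{y})$ and $u_k\in D^*F(x_k,y_k)(v_k)$ with $u_k\to u$ and $v_k\to v$, and apply the finite sum rule to write $u_k=a_k+b_k$ with $a_k\in D^*F_1(x_k,y_{1,k})(v_k)$ and $b_k\in D^*F_2(x_k,y_{2,k})(v_k)$ for some decomposition $y_k=y_{1,k}+y_{2,k}$. By (a) the $y_{i,k}$ are bounded, so along a subsequence $y_{i,k}\to\overline{y}_i$ with $(\overline{y}_1,\overline{y}_2)\in J^+(F_1+F_2,\overline{y})$. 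I would then split into two cases exactly as in the preceding intersection proposition. If $(a_k)$, hence $(b_k)$, is bounded, subsequential limits $a_k\to a$ and $b_k\to b$ satisfy $a+b=u$, and Proposition~\ref{Prop33} gives $a\in D^*F_1(\infty,\overline{y}_1)(v)$ and $b\in D^*F_2(\infty,\overline{y}_2)(v)$, placing $u$ in the claimed union. If $(a_k)$ is unbounded, dividing by $\|a_k\|\to\infty$ and using $v_k/\|a_k\|\to 0$ together with $(a_k+b_k)/\|a_k\|\to 0$ produces, via Proposition~\ref{Prop33}, a unit vector $a'\in D^*F_1(\infty,\overline{y}_1)(0)\cap\big(-D^*F_2(\infty,\overline{y}_2)(0)\big)$, contradicting (b); hence this case does not occur.

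The main obstacle is the interplay between the second and third steps, where the qualification (b) — available only in the limit at infinity — must control multiplier sequences living at finite points. The decisive device is the same compactness/rescaling technique used for the intersection proposition: normalize the offending sequences to the unit sphere, extract convergent subsequences, and push the limits through the outer-limit formula of Proposition~\ref{Prop33}. The point requiring care is that condition (a) must be invoked for \emph{every} relevant decomposition, not just one, so that each limiting pair $(\overline{y}_1,\overline{y}_2)$ genuinely lands in $J^+(F_1+F_2,\overline{y})$ and both the finite sum rule and condition (b) remain applicable at each stage.
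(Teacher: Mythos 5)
Your proposal is correct and follows essentially the same route as the paper's proof: local closedness from condition (a), transfer of the qualification (b) from infinity to finite points by a normalization--compactness contradiction argument, application of the classical finite-point sum rule of Mordukhovich, and the same two-case (bounded/unbounded multiplier) limiting analysis via the outer-limit description of coderivatives at infinity. If anything, you are slightly more careful than the paper on two minor points it glosses over --- that condition (a) yields the inner semicompactness needed to invoke the finite sum rule, and that the decomposition $y_k = y_{1,k}+y_{2,k}$ produced by that rule need not be the one initially chosen, so (a) must control \emph{all} decompositions --- but these refinements do not change the argument.
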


\begin{proof}
The condition~(a), together with the closedness of the graphs of the mappings $F_1$ and $F_2,$ ensures that $\mathrm{gph}F$ is locally closed at $(\infty, \overline{y}).$ 

Next observe that, by increasing $R$ and shrinking $V$ if necessary, we may assume 
\begin{eqnarray} \label{3PT1}
D^*F_1(x , y_1) (0) \cap \big(- D^*F_2(x, y_2)(0) \big) &=& \{0\}
\end{eqnarray}
for all $x \in \mathbb{R}^n \setminus \mathbb{B}_R,$ $y_1 \in F_1(x)$ and $y_2 \in F_2(x)$ with $y_1 + y_2 \in V.$
(Indeed, if this is not true, there are sequences $x_k \in \mathbb{R}^n, y_{1, k} \in F_1(x_k), y_{2, k} \in F_2(x_k)$ and $u_k \in \mathbb{R}^n \setminus \{0\}$ such that 
$\|x_k\| \to \infty,$ $y_{1, k} + y_{2, k} \to \overline{y}$ and 
\begin{eqnarray*}
u_k &\in &  D^*F_1(x_k, y_{1, k}) (0) \cap \big (- D^*F_2(x_k , y_{2, k})(0) \big).
\end{eqnarray*}
This, together with the positive homogeneity of coderivative mappings, yields
\begin{eqnarray*}
\dfrac{u_k}{\|u_k\|} &\in &  D^*F_1(x_k, y_{1, k}) (0) \cap \big (- D^*F_2(x_k , y_{2, k}) (0) \big).
\end{eqnarray*}
Passing to subsequences if necessary and applying Proposition~\ref{Prop37}, we get $(\overline{y}_1, \overline{y}_2) \in J^+(F_1 + F_2, \overline{y})$
(which comes from the condition~(a)) and a unit vector $u$ satisfying
\begin{eqnarray*}
u &\in&  D^*F_1(\infty, \overline{y}_1) (0) \cap \big (-  D^*F_2(\infty ,\overline{y}_2)(0) \big),
\end{eqnarray*}
which contradicts the condition~(b).)

Take any $u \in D^*F(\infty, \overline{y})(v) $. In view of Proposition~\ref{Prop37}, we find sequences $(x_k, y_k) \in \mathrm{gph} F,$ $u_k \in  \mathbb{R}^n$ and $v_k \in \mathbb{R}^m$ with $u_k \in D^*F(x_k,y_k)(v_k)$ such that
\begin{eqnarray*}
x_k \to \infty, \; y_k \to \overline{y}, \; u_k \to u \ \textrm{ and } \ v_k \to v. 
\end{eqnarray*}
By definition, there are $y_{1, k} \in F_1(x_k)$ and $y_{2, k} \in F_2(x_k)$ such that $y_k = y_{1,k} + y_{2, k}.$ By the condition~(a), we may assume that $y_{1, k}$ and $y_{2, k}$ converge to some $\overline{y}_1$ and $\overline{y}_2,$ respectively. Clearly, $(\overline{y}_1,\overline{y}_2) \in J^+(F_1 + F_2, \overline{y}).$

Now putting~\eqref{3PT1} together with \cite[Theorem~3.9]{Mordukhovich2018} (see also \cite[Theorem~10.41]{Rockafellar1998}), for all $k$ sufficiently large we can write
\begin{eqnarray*}
u_k &=& u_{1, k} + u_{2, k},
\end{eqnarray*}
where $u_{1, k} \in D^*F_1(x_k, y_{1, k})(v_k)$ and $u_{2, k} \in D^*F_2(x_k, y_{2, k})(v_k)$. We consider two following cases. 

\subsubsection*{Case 1: the sequence $u_{1, k}$ is bounded.} 
Then the sequence $u_{2, k}$ is bounded too. Passing to subsequences if necessary we may assume that the sequences $u_{1, k}$ and $u_{2, k}$ converge to some $u_1$ and $u_2,$ respectively. By Proposition~\ref{Prop37}, $u_1 \in  D^*F_1(\infty , \overline{y}_1) (v) $ and $u_2 \in  D^*F_2(\infty , \overline{y}_2) (v).$ Therefore,  $u  = u_1 + u_2 \in   D^*F_1(\infty , \overline{y}_1) (v) + D^*F_2(\infty , \overline{y}_2)(v),$ as required.

\subsubsection*{Case 2: the sequence $u_{1, k}$ is unbounded.} 
Then the sequence $u_{2, k}$ is unbounded too. By the positive homogeneity of coderivative mappings, we have
\begin{eqnarray*}
\dfrac{u_{1, k}}{\|u_{1, k}\|} &\in& D^*F_1(x_k, y_{1, k}) \left(\dfrac{v_k}{\|u_{1, k}\|} \right) \quad \textrm{ and } \quad  
\dfrac{u_{2, k}}{\|u_{1, k}\|} \ \in \ D^*F_2(x_k, y_{2, k}) \left(\dfrac{v_k}{\|u_{1, k}\|} \right).
\end{eqnarray*}
Since $u_k = u_{1, k} + u_{2,k}$ is a convergent sequence, it is easy to see that the sequence $\frac{u_{2, k}}{\|u_{1, k}\|}$ is bounded. Passing to subsequences if necessary we may assume that the sequences $\frac{u_{1, k}}{\|u_{1, k}\|}$ and $\frac{u_{2, k}}{\|u_{1, k}\|}$ converge to some $u_1'$ and $u_2',$ respectively. Clearly, $\|u_1'\| = 1$ and $u_1' + u_2' = 0;$ moreover, by Proposition~\ref{Prop37}, we get $u_1' \in D^*F_1(\infty , \overline{y}_1) (0) $ and $u_2' \in D^*F_2(\infty , \overline{y}_2) (0)$. These contradict our constraint qualification.
\end{proof}

Composition of $F_1\colon \mathbb{R}^n \rightrightarrows \mathbb{R}^p$ with $F_2 \colon \mathbb{R}^p \rightrightarrows \mathbb{R}^m$ is defined by
\begin{eqnarray*}
F_2 \circ F_1(x) &:=& \{ y \in \mathbb{R}^m \mid F_1(x) \cap F_2^{-1}(y) \ne \emptyset \} \quad \textrm{ for all } \; x \in \mathbb{R}^n. 
\end{eqnarray*}
to get a set-valued mapping $F_2 \circ F_1 \colon \mathbb{R}^n \rightrightarrows \mathbb{R}^m.$ For each $y \in \mathbb{R}^m$, we let
\begin{eqnarray*}
J^\circ(F_2\circ F_1,y) &:=&  \{ z \in \mathbb{R}^p \mid \exists x_k \rightarrow \infty, \ \exists z_{k} \in F_1(x_k), \ \exists y_{k} \in F_2(z_k), \ z_{k} \to z, \ y_{k} \to y\}. 
\end{eqnarray*}

\begin{theorem}[chain rule]
Suppose $F := F_2 \circ F_1$ for  closed-graph set-valued mappings $F_1\colon \mathbb{R}^n \rightrightarrows {\mathbb{R}^p}$ and $F_2 \colon \mathbb{R}^p \rightrightarrows {\mathbb{R}^m},$ and let $\overline{y} \in J(F).$ Assume both of the following conditions are satisfied.
\begin{enumerate}[{\rm (a)}]
\item {\rm (boundedness condition):} There exist constants $R > 0, R' > 0$ and a neighborhood $V$ of $\overline{y}$ in $\mathbb{R}^m$ such that
for all $x \in \mathbb{R}^n \setminus \mathbb{B}_R$ and $y \in V$ we have
\begin{eqnarray*}
F_1(x) \cap F_2^{-1}(y) &\subset& \mathbb{B}_{R'}.
\end{eqnarray*}

\item {\rm (constraint qualification):} For every $z \in J^\circ(F_2 \circ F_1, \overline{y}),$ one has 
\begin{eqnarray*}
D^*F_2(z,\overline{y}) (0)\cap \mathrm{ker} D^*F_1(\infty , z) &=& \{0\}.
\end{eqnarray*}
\end{enumerate}
Then $\mathrm{gph}F$ is locally closed at $(\infty, \overline{y}),$ and 
\begin{eqnarray*}
D^*(F_2 \circ F_1)(\infty, \overline{y}) &\subset& \bigcup_{\overline{z} \in J^\circ(F_2 \circ F_1, \overline{y})}  D^*F_1(\infty , \overline{z}) \circ D^*F_2(\overline{z} , \overline{y}). 
\end{eqnarray*}
\end{theorem}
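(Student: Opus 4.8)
The plan is to mirror the proof of the sum rule (Theorem~\ref{SumeRule}): reduce the assertion at infinity to the classical coderivative chain rule at finite points by means of the outer-limit representation in Proposition~\ref{Prop37}, and to control the intermediate variable $z$ through the boundedness condition~(a). First I would observe that condition~(a) together with the closedness of $\mathrm{gph}F_1$ and $\mathrm{gph}F_2$ forces $\mathrm{gph}F$ to be locally closed at $(\infty,\overline{y})$: any sequence $(x_k,y_k)\in\mathrm{gph}F$ with $\|x_k\|>R$, $y_k\in V$ and $(x_k,y_k)$ convergent lifts, by~(a), to a sequence of intermediate points $z_k\in F_1(x_k)\cap F_2^{-1}(y_k)\subset\mathbb{B}_{R'}$; a convergent subsequence $z_k\to\overline{z}$ together with the closed graphs places the limit back in $\mathrm{gph}F$.

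Next, exactly as in the sum rule, I would upgrade the constraint qualification~(b) to a uniform pointwise version near infinity: after possibly enlarging $R$ and shrinking $V$, one has
\begin{eqnarray*}
D^*F_2(z,y)(0)\cap \mathrm{ker}\,D^*F_1(x,z) &=& \{0\}
\end{eqnarray*}
for all $x$ with $\|x\|>R$, all $y\in V$ and all $z\in F_1(x)\cap F_2^{-1}(y)$. This is argued by contradiction: a failure yields sequences $x_k\to\infty$, $y_k\to\overline{y}$, bounded $z_k$ (by~(a)) and unit vectors $w_k$ realizing the intersection; passing to a subsequence with $z_k\to\overline{z}\in J^\circ(F_2\circ F_1,\overline{y})$ and invoking Proposition~\ref{Prop37} for $F_1$ produces a unit vector $w\in D^*F_2(\overline{z},\overline{y})(0)\cap\mathrm{ker}\,D^*F_1(\infty,\overline{z})$, contradicting~(b).

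With these preparations, take any $u\in D^*F(\infty,\overline{y})(v)$. By Proposition~\ref{Prop37} there are $(x_k,y_k)\in\mathrm{gph}F$, $u_k\to u$ and $v_k\to v$ with $x_k\to\infty$, $y_k\to\overline{y}$ and $u_k\in D^*F(x_k,y_k)(v_k)$; by~(a) choose $z_k\in F_1(x_k)\cap F_2^{-1}(y_k)\subset\mathbb{B}_{R'}$ and, after a subsequence, $z_k\to\overline{z}\in J^\circ(F_2\circ F_1,\overline{y})$. Since the pointwise qualification of the previous step holds at $(x_k,z_k,y_k)$ for all large $k$, the classical finite-dimensional chain rule (see \cite{Mordukhovich2018, Rockafellar1998}) applies and furnishes $w_k\in D^*F_2(z_k,y_k)(v_k)$ with $u_k\in D^*F_1(x_k,z_k)(w_k)$. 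The argument now splits on the boundedness of $w_k$, just as in the two cases of the sum rule. If $w_k$ is bounded, a convergent subsequence $w_k\to w$ combined with the ordinary outer-limit characterization of the limiting coderivative of $F_2$ at the finite point $(\overline{z},\overline{y})$ gives $w\in D^*F_2(\overline{z},\overline{y})(v)$, while Proposition~\ref{Prop37} for $F_1$ gives $u\in D^*F_1(\infty,\overline{z})(w)$; hence $u\in\big(D^*F_1(\infty,\overline{z})\circ D^*F_2(\overline{z},\overline{y})\big)(v)$, as required. If $w_k$ is unbounded, dividing both inclusions by $\|w_k\|$ and using $u_k\to u$ with $\|w_k\|\to\infty$ yields in the limit a unit vector $w'\in D^*F_2(\overline{z},\overline{y})(0)\cap\mathrm{ker}\,D^*F_1(\infty,\overline{z})$, contradicting~(b); so this case is vacuous.

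I expect the main obstacle to be the simultaneous limit passage in the bounded subcase of the final step: one must take the limit \emph{at infinity} in the $x$-variable for $F_1$ (through Proposition~\ref{Prop37}) and the \emph{ordinary} limit at the finite point $(\overline{z},\overline{y})$ for $F_2$ at the same time. This is precisely what the boundedness condition~(a) is designed to permit, by confining the intermediate points $z_k$ to the compact ball $\mathbb{B}_{R'}$ so that they admit a subsequence converging to a single $\overline{z}\in J^\circ(F_2\circ F_1,\overline{y})$. Verifying that~(a) guarantees the existence and convergence of these intermediate points, and that the uniform qualification legitimately licenses the classical chain rule at each approximating finite triple $(x_k,z_k,y_k)$, is where the technical care is concentrated.
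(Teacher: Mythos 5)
Your proposal is correct and follows essentially the same route as the paper's own proof: local closedness of $\mathrm{gph}F$ via condition~(a) and the closed graphs, a compactness/contradiction argument upgrading~(b) to a uniform qualification $D^*F_2(z,y)(0)\cap\mathrm{ker}\,D^*F_1(x,z)=\{0\}$ near infinity, application of the classical coderivative chain rule at the finite triples $(x_k,z_k,y_k)$ with the intermediate points confined to $\mathbb{B}_{R'}$ by~(a), and the same bounded/unbounded dichotomy on the dual sequence $w_k$. If anything, you are slightly more careful than the paper at one spot, invoking the ordinary outer-limit robustness of the limiting coderivative of $F_2$ at the finite point $(\overline{z},\overline{y})$ where the paper loosely cites Proposition~\ref{Prop37} (which concerns coderivatives at infinity).
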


\begin{proof}
The condition~(a), together with the closedness of the graphs of the mappings $F_1$ and $F_2,$ implies that $\mathrm{gph}F$ is locally closed at $(\infty, \overline{y}).$ 

Next observe that, by increasing $R$ and shrinking $V$ if necessary, we may assume 
\begin{eqnarray}\label{3PT2}
D^*F_2(z, y) (0) \cap \mathrm{ker} D^*F_1(x, z) &=& \{0\}
\end{eqnarray}
for all $x \in \mathbb{R}^n \setminus \mathbb{B}_R, y \in V$ and $z \in F_1(x) \cap F_2^{-1}(y).$
(Indeed, if this is not true, there are sequences $(x_k, y_k) \to (\infty, \overline{y}), z_k \in  F_1(x_k) \cap F_2^{-1}(y_k)$ and $u_k \in \mathbb{R}^n \setminus \{0\}$ such that 
\begin{eqnarray*}
u_k &\in & D^*F_2(z_k, y_k) (0) \cap \mathrm{ker} D^*F_1(x_k, z_k).
\end{eqnarray*}
This, together with the positive homogeneity of coderivative mappings, yields
\begin{eqnarray*}
\dfrac{u_k}{\|u_k\|} &\in &  D^*F_2(z_k, y_k) (0) \cap \mathrm{ker} D^*F_1(x_k, z_k).
\end{eqnarray*}
Passing to subsequences if necessary and applying Proposition~\ref{Prop37}, we get $\overline{z} \in J^\circ(F_2 \circ F_1, \overline{y})$ 
(which comes from the condition~(a)) and a unit vector $u$ satisfying
\begin{eqnarray*}
u &\in&  D^*F_2(\overline{z}, \overline{y}) (0) \cap \mathrm{ker} D^*F_1(\infty, \overline{z}),
\end{eqnarray*}
which contradicts the condition~(b).)

Let $v \in \mathbb{R}^m,$ and take any $u \in D^* (F_2 \circ F_1)(\infty, \overline{y})(v).$ By Proposition~\ref{Prop37}, there are sequences $x_k \in \mathbb{R}^n,$ 
$y_k \in (F_2 \circ F_1) (x_k),$ $v_k \in \mathbb{R}^m$ and $u_k \in  D^* (F_2 \circ F_1)(x_k, y_k)(v_k)$ satisfying 
\begin{eqnarray*}
\|x_k\| \to \infty, \ y_k \to  \overline{y}, \ v_k \to v, \quad \textrm{and} \quad u_k \to u.
\end{eqnarray*}
As $y_k \in (F_2 \circ F_1) (x_k),$ we find $z_k \in F_1(x_k)$ such that $y_k \in F_2(z_k)$.
By the condition~(a) and passing to a subsequence if necessary, we may assume that the sequence $z_k$ converges to some $\overline{z}.$
Clearly $\overline{z} \in J^\circ(F_2 \circ F_1, \overline{y}).$ 

Putting the relation \eqref{3PT2} together with \cite[Theorem~3.11]{Mordukhovich2018} (see also \cite[Theorem~10.37]{Rockafellar1998}), we obtain for all $k$ large enough
\begin{eqnarray*}
D^* (F_2 \circ F_1)(x_k, y_k)(v_k) &\subset & D^* F_1(x_k, z_k) \circ D^* F_2(z_k,y_k)(v_k),
\end{eqnarray*}
and so there is $\widehat{u}_{k} \in D^* F_2(z_k,y_k)(v_k)$ such that $u_{k} \in D^* F_1(x_k, z_k)(\widehat{u}_k). $

Passing to subsequences if necessary, it suffices to consider the following two cases.

\subsubsection*{Case 1: the sequence $\widehat{u}_k$ is bounded.} 
We may assume further that $\widehat{u}_k$ converges to some $\widehat{u}.$ By Proposition~\ref{Prop37}, then $\widehat{u} \in  D^*F_2(\overline{z} , \overline{y}) (v) $ and $u \in  D^*F_1(\infty , \overline{z}) (\widehat{u}),$ as required. 

\subsubsection*{Case 2: the sequence $\widehat{u}_k$ is unbounded.} 
By the positive homogeneity of coderivative mappings, we see that 
\begin{eqnarray*}
\dfrac{\widehat{u}_k}{\|\widehat{u}_k\|} \in D^* F_2(z_k,y_k) \left( \dfrac{v_k}{\|\widehat{u}_k\|}\right) \quad \textrm{and} \quad \dfrac{u_{k}}{\|\widehat{u}_k\|} \in D^* F_1(x_k,z_k) \left(\dfrac{\widehat{u}_k}{\|\widehat{u}_k\|}\right). 
\end{eqnarray*}
Passing to a subsequence, we may assume that the sequence $\dfrac{\widehat{u}_k}{\|\widehat{u}_k\|}  $ converges to some ${u}' \ne 0.$ Note that
$\dfrac{u_k}{\|\widehat{u}_k\|} \to 0$ and $\dfrac{v_{k}}{\|\widehat{u}_k\|} \to 0.$ In view of Proposition~\ref{Prop37}, $u' \in D^*F_2(\overline{z},\overline{y}) (0)\cap \mathrm{ker} D^*F_1(\infty , \overline{z}),$ which contradicts the condition~(b). 
\end{proof}

\subsection{Subdifferentials at infinity}

We define next the {\em basic and singular limiting subdifferentials at infinity} of extended-real-valued functions geometrically via the normal cones at infinity to the epigraphs of the functions in question. Namely, let $f \colon \mathbb{R}^n \to \overline{\mathbb{R}}$ be a lower semi-continuous function and $\overline{y} \in J(f).$ Recall that
\begin{eqnarray*}
N_{\mathrm{epi} f}(\infty, \overline{y}) &:=& \Limsup_{(x,r)\stackrel{\mathrm{epi} f}\longrightarrow(\infty, \overline{y})}\widehat{N}_{\mathrm{epi} f}(x,r).
\end{eqnarray*}

\begin{definition}{\rm 
The {\em limiting and singular subdifferentials} of $f$ at $(\infty,\overline{y})$ are defined by 
\begin{eqnarray*}
\partial f(\infty,\overline{y}) &:=& \{u \in \mathbb{R}^n \ | \ (u, -1) \in N_{\mathrm{epi} f}(\infty,\overline{y})\},\\
\partial^{\infty} f(\infty,\overline{y}) &:=& \{u \in \mathbb{R}^n \ | \ (u, 0) \in N_{\mathrm{epi} f}(\infty,\overline{y})\}. 
\end{eqnarray*}
}\end{definition}

The next result provides a simple formula for the normal cones $N_{\mathrm{epi} f}(\infty, \overline{y}),$ and hence, for the subdifferentials $\partial f(\infty,\overline{y})$ and $\partial^{\infty} f(\infty,\overline{y}).$

\begin{proposition}
We have
\begin{eqnarray*}
N_{\mathrm{epi} f}(\infty, \overline{y}) &=& \Limsup_{(x,f(x))\longrightarrow(\infty, \overline{y})}\widehat{N}_{\mathrm{epi} f}(x,f(x)).
\end{eqnarray*}
\end{proposition}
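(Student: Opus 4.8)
The plan is to prove the two inclusions separately, the inclusion $\supseteq$ being essentially free and $\subseteq$ requiring a reduction from epigraph points to graph points. For $\supseteq$, I would simply observe that every graph point $(x,f(x))$ lies in $\mathrm{epi} f$, and that a sequence with $(x_k,f(x_k)) \to (\infty,\overline{y})$ is a special case of a sequence in $\mathrm{epi} f$ converging to $(\infty,\overline{y})$ in the sense defining $N_{\mathrm{epi} f}(\infty,\overline{y})$; hence any outer limit of Fréchet normals taken along graph points is already contained in the defining outer limit over epigraph points. This half needs no work beyond this remark.

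For the reverse inclusion, I would start from $\xi \in N_{\mathrm{epi} f}(\infty,\overline{y})$, so that there are $(x_k,r_k)\in\mathrm{epi} f$ with $\|x_k\|\to\infty$, $r_k\to\overline{y}$, and $\xi_k=(\xi_k^x,\xi_k^r)\in\widehat{N}_{\mathrm{epi} f}(x_k,r_k)$ with $\xi_k\to\xi$. The goal is to replace each base point $(x_k,r_k)$ by the graph point directly beneath it. The main technical ingredient is the standard pointwise \emph{vertical descent} property of epigraphical Fréchet normals (see \cite{Rockafellar1998, Mordukhovich2018}), which I would reprove in two lines: if $r>f(x)$ and $\eta=(\eta^x,\eta^r)\in\widehat{N}_{\mathrm{epi} f}(x,r)$, then testing the regular-normal inequality against the vertical points $(x,r\pm t)$, both of which lie in $\mathrm{epi} f$ for small $t>0$ (the downward one because $r-t\ge f(x)$), forces $\eta^r=0$; and then, for any $(x',r')\in\mathrm{epi} f$ near $(x,f(x))$, the shifted point $(x',r'+(r-f(x)))$ lies in $\mathrm{epi} f$ and sits near $(x,r)$, so the regular-normal inequality at $(x,r)$ yields $\langle \eta^x, x'-x\rangle \le o(\|(x'-x,\,r'-f(x))\|)$, that is, $(\eta^x,0)\in\widehat{N}_{\mathrm{epi} f}(x,f(x))$. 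Thus, whenever $r_k>f(x_k)$, the normal $\xi_k$ is horizontal and descends unchanged to the graph point $(x_k,f(x_k))$.

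With this in hand I would split into cases. If $r_k=f(x_k)$ along a subsequence, the base points already lie on the graph with $f(x_k)=r_k\to\overline{y}$, and $\xi\in$ RHS. Otherwise $r_k>f(x_k)$ for all large $k$, the descent gives $(\xi_k^x,0)\in\widehat{N}_{\mathrm{epi} f}(x_k,f(x_k))$ with $(\xi_k^x,0)\to\xi$. The step I expect to be the main obstacle is the \emph{level convergence} $f(x_k)\to\overline{y}$: in the classical local setting this is automatic, since lower semicontinuity of $f$ at the limiting base point together with $f(x_k)\le r_k\to\overline{y}$ pins down $\liminf f(x_k)\ge\overline{y}$, but at infinity there is no such semicontinuity to exploit, so a priori only $\limsup f(x_k)\le\overline{y}$ is available. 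I would handle this by extracting a subsequence along which $f(x_k)\to L\le\overline{y}$: if $L=\overline{y}$ we are done, and if $L<\overline{y}$ one must exploit that a nonzero horizontal normal $\xi_k^x$ forces $f$ to jump up above $r_k\approx\overline{y}$ in every neighborhood of $x_k$ (otherwise the regular-normal inequality, tested over a full neighborhood on which $r'=r_k$ is admissible, would force $\xi_k^x=0$). Using this jump structure I would, for each $k$, select an auxiliary graph point whose value lies within $1/k$ of $r_k$ and which carries a Fréchet normal close to $(\xi_k^x,0)$ — a selection step that can be made rigorous through the Ekeland variational principle (Theorem~\ref{Theorem210}) applied to a suitably perturbed distance-type function — and then diagonalize to obtain graph points $(\widetilde{x}_k,f(\widetilde{x}_k))\to(\infty,\overline{y})$ with Fréchet normals converging to $\xi$. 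This establishes $\xi\in$ RHS and completes the proof.
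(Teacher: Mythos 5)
Your inclusion $\supseteq$ is the same one-line observation as in the paper, and your vertical-descent lemma is correct and complete: testing the regular-normal inequality against $(x, r\pm t)$ does force $\eta^r=0$ when $r>f(x)$, and the shift $(x',r')\mapsto (x', r'+r-f(x))$ does give $(\eta^x,0)\in\widehat{N}_{\mathrm{epi} f}(x,f(x))$. This is a legitimate substitute for the paper's route, which instead notes the tangent-cone monotonicity $T_{\mathrm{epi} f}(x,f(x))\subset T_{\mathrm{epi} f}(x,r)$ for $r\ge f(x)$ and dualizes through Lemma~\ref{Lemma2.3} to obtain $\widehat{N}_{\mathrm{epi} f}(x,r)\subset\widehat{N}_{\mathrm{epi} f}(x,f(x))$; your version even gives slightly more information (the vanishing of the vertical component), and both reduce the problem to the same remaining point.

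The genuine gap is your case $L<\overline{y}$, and it cannot be repaired: the auxiliary graph points you propose to select do not exist in general, so no Ekeland-type selection can produce them. Concretely, take $f\colon\mathbb{R}\to\mathbb{R}$ with $f=2$ on $\bigcup_k (4k,4k+1)$, $f=1$ on $\bigcup_k(4k+2,4k+3)$, and $f=0$ elsewhere; this $f$ is lower semi-continuous with closed epigraph, and $\overline{y}:=1\in J(f)$. Near $(4k,1)$ the epigraph is the half-plane $\{x\le 4k\}$, so $(1,0)\in\widehat{N}_{\mathrm{epi} f}(4k,1)$ and hence $(1,0)\in N_{\mathrm{epi} f}(\infty,1)$; here $f(4k)=0\to L=0<1$, which is exactly your problematic case, and $f$ does jump above $r_k=1$ arbitrarily close to $x_k=4k$, just as your argument predicts. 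But the only graph points with value near $1$ lie on the plateaus $(4j+2,4j+3)$, where the epigraph is locally $\{r\ge 1\}$ and the Fr\'echet cone is $\{0\}\times(-\infty,0]$; thus the right-hand side equals $\{0\}\times(-\infty,0]$ and misses $(1,0)$. The jump forced by a nonzero horizontal normal can overshoot the level $\overline{y}$ entirely, so no graph point at level near $\overline{y}$ carrying a normal near $(\xi_k^x,0)$ need exist near $x_k$, or indeed anywhere. You should also know that the paper's own proof is silent on this very point: after the pointwise descent it concludes the inclusion of outer limits without verifying $f(x_k)\to\overline{y}$, i.e., it implicitly invokes the lower-semicontinuity argument that, as you correctly observe, is unavailable at infinity. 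So your diagnosis of the obstacle is sharper than the paper's treatment, but the obstacle is fatal to the unqualified statement: the inclusion $\subseteq$ is only secured for those $\xi$ whose witnessing sequences satisfy $r_k=f(x_k)$ along a subsequence --- in particular, by your own descent lemma, for every $\xi$ with nonzero vertical component --- and fails in general for horizontal normals, as the example shows.
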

\begin{proof}
The inclusion $\supset$ is clear from the definition of $N_{\mathrm{epi} f}(\infty, \overline{y}).$
For the inclusion $\subset,$ we see that for all $x \in \mathrm{dom} f$ and $r \geq f(x),$
\begin{eqnarray*}
T_{\textrm{epi} f}(x,f(x)) & \subset & T_{\textrm{epi} f}(x, r),
\end{eqnarray*}
which in combination with Lemma~\ref{Lemma2.3} yields
\begin{eqnarray*}
\widehat{N}_{\textrm{epi} f}(x,r)  & \subset  & \widehat{N}_{\textrm{epi} f}(x, f(x)).
\end{eqnarray*}
Therefore
\begin{eqnarray*}
\Limsup_{(x,r)\stackrel{\mathrm{epi} f}\longrightarrow(\infty, \overline{y})}\widehat{N}_{\mathrm{epi} f}(x,r) &\subset& \Limsup_{(x,f(x))\longrightarrow(\infty, \overline{y})}\widehat{N}_{\mathrm{epi} f}(x,f(x)),
\end{eqnarray*}
as required.
\end{proof}

The alternative formulas for the subdifferentials at infinity are given below.

\begin{proposition}\label{Prop312}
 We have
\begin{eqnarray*}
\partial f (\infty, \overline{y}) &=& \mathop{\mathop {\Limsup}\limits_{(x, f(x)) \to (\infty, \overline{y})}} { \partial f(x)}, \\
\partial^{\infty} f (\infty, \overline{y})  &=& \mathop{\mathop {\Limsup}\limits_{(x, f(x)) \to  (\infty, \overline{y})}}
\limits_{r  \searrow 0} {r \partial f (x)}.
\end{eqnarray*}
\end{proposition}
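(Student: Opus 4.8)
The plan is to reduce everything to the graph-point formula
\begin{equation*}
N_{\mathrm{epi} f}(\infty, \overline{y}) \ = \ \Limsup_{(x,f(x)) \to (\infty, \overline{y})} N_{\mathrm{epi} f}(x, f(x)), \tag{$\star$}
\end{equation*}
and then to read off the two identities from the finite-point characterizations $\partial f(x) = \{u \mid (u,-1) \in N_{\mathrm{epi} f}(x,f(x))\}$ and $\partial^{\infty} f(x) = \{u \mid (u,0) \in N_{\mathrm{epi} f}(x,f(x))\}.$ I would first prove~$(\star)$. The inclusion $\supset$ in~$(\star)$ is a direct consequence of Proposition~\ref{Prop33} applied to $\Omega = \mathrm{epi} f,$ since every sequence $(x,f(x)) \to (\infty,\overline{y})$ is a particular sequence of epigraphical points converging to $(\infty, \overline{y});$ the inclusion $\subset$ follows from the preceding proposition together with $\widehat{N}_{\mathrm{epi} f} \subset N_{\mathrm{epi} f}.$ The crucial gain from~$(\star)$ is that it lets me convert membership $(u,-1) \in N_{\mathrm{epi} f}(\infty,\overline{y})$ or $(u,0) \in N_{\mathrm{epi} f}(\infty,\overline{y})$ into sequences of genuine (limiting) normals at graph points $(x_k, f(x_k)).$

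For the first identity I would argue by double inclusion, using throughout the cone-scaling observation that, for $\mu > 0,$ one has $(u,-\mu) \in N_{\mathrm{epi} f}(x,f(x))$ if and only if $u/\mu \in \partial f(x).$ If $u = \lim u_k$ with $u_k \in \partial f(x_k)$ and $(x_k, f(x_k)) \to (\infty,\overline{y}),$ then $(u_k,-1) \in N_{\mathrm{epi} f}(x_k, f(x_k))$ converges to $(u,-1),$ so~$(\star)$ gives $u \in \partial f(\infty,\overline{y}).$ Conversely, starting from $(u,-1) \in N_{\mathrm{epi} f}(\infty,\overline{y}),$ the identity~$(\star)$ produces normals $(u_k, -\mu_k) \in N_{\mathrm{epi} f}(x_k, f(x_k))$ with $(u_k,-\mu_k) \to (u,-1).$ Here $\mu_k \to 1 > 0,$ so for large $k$ I may scale to obtain $u_k/\mu_k \in \partial f(x_k)$ with $u_k/\mu_k \to u,$ placing $u$ in the outer limit. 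This direction is routine precisely because the limit $\mu_k \to 1$ keeps the scaling factor bounded away from $0.$

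For the singular identity the inclusion $\supset$ is again straightforward: an element of the right-hand outer limit is $u = \lim u_k$ with $u_k = r_k w_k,$ $w_k \in \partial f(x_k),$ $r_k \searrow 0,$ and the cone property gives $(u_k, -r_k) = r_k(w_k,-1) \in N_{\mathrm{epi} f}(x_k, f(x_k));$ since $(u_k,-r_k) \to (u,0),$ identity~$(\star)$ yields $(u,0) \in N_{\mathrm{epi} f}(\infty,\overline{y}),$ i.e.\ $u \in \partial^{\infty} f(\infty,\overline{y}).$ The reverse inclusion is where the difficulty lies. From $(u,0) \in N_{\mathrm{epi} f}(\infty,\overline{y})$ the identity~$(\star)$ furnishes $(u_k, -\mu_k) \in N_{\mathrm{epi} f}(x_k, f(x_k))$ with $u_k \to u$ and $\mu_k \searrow 0,$ $\mu_k \ge 0.$ Along indices with $\mu_k > 0$ the same scaling as before writes $u_k \in \mu_k \partial f(x_k)$ with $\mu_k \searrow 0,$ so such $u$ belong to the desired outer limit.

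The main obstacle is the subsequence on which $\mu_k = 0,$ i.e.\ the \emph{horizontal} normals $(u_k, 0) \in N_{\mathrm{epi} f}(x_k, f(x_k)),$ equivalently $u_k \in \partial^{\infty} f(x_k):$ no scaling factor is available and a subgradient cannot be recovered directly. To handle this I would invoke the known finite-point representation of the singular subdifferential,
\begin{equation*}
\partial^{\infty} f(x) \ = \ \Limsup_{\substack{(x', f(x')) \to (x, f(x)) \\ r \searrow 0}} r\, \widehat{\partial} f(x'),
\end{equation*}
see \cite{Mordukhovich2006, Rockafellar1998}; since $\widehat{\partial} f \subset \partial f,$ this exhibits $u_k$ as a limit of elements $r\, w$ with $w \in \partial f(x')$ and $x'$ arbitrarily close to $x_k,$ $r$ arbitrarily small. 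Applying this at each $x_k$ and then performing a diagonal extraction---legitimate because $\|x_k\| \to \infty$ and $f(x_k) \to \overline{y}$---produces a single sequence $x'_j$ with $\|x'_j\| \to \infty,$ $f(x'_j) \to \overline{y},$ scalars $r_j \searrow 0$ and $w_j \in \partial f(x'_j)$ such that $r_j w_j \to u.$ This places $u$ in the right-hand outer limit and completes the argument. I expect this diagonalization around the horizontal normals to be the only genuinely delicate point; everything else is bookkeeping with outer limits and positive homogeneity.
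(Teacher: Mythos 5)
Your proposal is correct and follows the route the paper intends: the paper omits its own proof of Proposition~\ref{Prop312}, deferring to the analogous \cite[Proposition~4.4]{PHAMTS2023-5}, and your argument---reducing to the graph-point identity $(\star)$ via Proposition~\ref{Prop33} and the preceding proposition, scaling the vertical normals, and absorbing the horizontal normals $(u_k,0)\in N_{\mathrm{epi}\, f}(x_k,f(x_k))$ through the classical representation $\partial^{\infty}f(x)=\Limsup_{(x',f(x'))\to(x,f(x)),\; r\searrow 0}\, r\,\widehat{\partial}f(x')$ combined with a diagonal extraction---supplies exactly the omitted details. The only step worth making explicit is the standard fact that any limiting normal $(u,-\mu)$ to an epigraph satisfies $\mu\ge 0$ (since $(0,1)$ is always a tangent direction), which you assert without proof when splitting into the cases $\mu_k>0$ and $\mu_k=0$.
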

\begin{proof}
The proof is omitted because it is similar to \cite[Proposition~4.4]{PHAMTS2023-5}.
\end{proof}

\begin{example}{\rm
(i) Consider the smooth function $f \colon \mathbb{R} \to \mathbb{R}$,  $x\mapsto \sin x$. We have $J(f)=[-1, 1]$ and $\partial f(x)=\{\cos x\}$ for all $x\in \mathbb{R}$. By virtue of Proposition~\ref{Prop312}, $\partial f(\infty,0)=\{-1, 1\},$ which is not a singleton set, while $\partial^{\infty} f(\infty, 0) = \{0\}.$

(ii) Consider the smooth function $f \colon \mathbb{R} \to \mathbb{R}$,  $x\mapsto e^{x}$. We have $J(f)=\{0\}$ and $\partial f(x)=\{e^x\}$ for all $x\in \mathbb{R}$.  In view of Proposition~\ref{Prop312}, $\partial f(\infty,0)=\{0\}$ and $\partial^{\infty} f(\infty, 0) = [0, +\infty).$
}\end{example}

We consider the {\em epigraphical} set-valued mapping $E_{f} \colon \mathbb{R}^n \rightrightarrows \mathbb{R}$ associated with $f$ by
\begin{eqnarray*}
E_{f}(x) &:=& \{ r \in \mathbb{R}\mid  r\ge f(x) \}. 
\end{eqnarray*}
It is clear that $\textrm{gph}E_{f}=\textrm{epi}f$ and $\textrm{dom}E_{f}=\textrm{dom}f.$ Moreover we have
\begin{eqnarray*}
\partial f(\infty,\overline{y}) &=& D^*E_{f}(\infty,\overline{y})(1) \quad \textrm{ and } \quad  \partial^{\infty} f(\infty,\overline{y}) \ = \ D^*E_{f}(\infty,\overline{y})(0).
\end{eqnarray*}
The next result shows that we can replace $E_f$ in the coderivative representation of $\partial f(\infty,\overline{y}) $ by the function $f$ itself, having also a useful relationship between $\partial^{\infty} f(\infty,\overline{y}) $ 
and $D^* f (\infty,\overline{y})(0).$

\begin{proposition} 
Let $f$ be lower semi-continuous and $\overline{y} \in J(f).$ Then we have the following statements.
\begin{enumerate}[{\rm (i)}]
\item $\partial f (\infty, \overline{y})=D^* f (\infty,\overline{y})(1).$
\item $\partial^{\infty} f (\infty, \overline{y}) \subset D^* f (\infty,\overline{y})(0).$
\end{enumerate}
\end{proposition}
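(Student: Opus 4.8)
\emph{Plan.} The whole statement reduces to comparing the two normal cones at infinity $N_{\mathrm{epi} f}(\infty,\overline{y})$ and $N_{\mathrm{gph} f}(\infty,\overline{y})$, and the key structural fact I would exploit is that $\mathrm{gph} f \subset \mathrm{epi} f$ while, by the foregoing proposition expressing $N_{\mathrm{epi} f}(\infty,\overline{y})$ as an outer limit over graph approaches (together with the definition of $N_{\mathrm{gph} f}(\infty,\overline{y})$), both cones are Painlev\'e--Kuratowski outer limits of \emph{regular} normal cones taken over the \emph{same} index set, namely the approaches $(x,f(x)) \to (\infty,\overline{y})$ with $x \in \mathrm{dom} f$. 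Recalling that, by definition, $\partial f(\infty,\overline{y}) = D^*E_f(\infty,\overline{y})(1)$ and $\partial^{\infty}f(\infty,\overline{y}) = D^*E_f(\infty,\overline{y})(0)$ are exactly the slices of $N_{\mathrm{epi} f}(\infty,\overline{y})$ at $(u,-1)$ and $(u,0)$, while $D^*f(\infty,\overline{y})(1)$ and $D^*f(\infty,\overline{y})(0)$ are the corresponding slices of $N_{\mathrm{gph} f}(\infty,\overline{y})$, the proposition becomes a comparison of slices of these two cones.

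First I would record the elementary inclusion. Since $\mathrm{gph} f \subset \mathrm{epi} f$ and both sets contain each point $(x,f(x))$ with $x \in \mathrm{dom} f$, monotonicity of the regular normal cone under set inclusion gives $\widehat{N}_{\mathrm{epi} f}(x,f(x)) \subset \widehat{N}_{\mathrm{gph} f}(x,f(x))$ for every such $x$. Passing to the outer limit as $(x,f(x)) \to (\infty,\overline{y})$ then yields $N_{\mathrm{epi} f}(\infty,\overline{y}) \subset N_{\mathrm{gph} f}(\infty,\overline{y})$. Intersecting with the slices $\{(u,-1)\}$ and $\{(u,0)\}$ immediately delivers the inclusion $\partial f(\infty,\overline{y}) \subset D^*f(\infty,\overline{y})(1)$ and the entirety of statement (ii), $\partial^{\infty}f(\infty,\overline{y}) \subset D^*f(\infty,\overline{y})(0)$.

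For the reverse inclusion in (i), I would take $u \in D^*f(\infty,\overline{y})(1)$, so that $(u,-1) \in N_{\mathrm{gph} f}(\infty,\overline{y})$; by the outer-limit description there are approaches $(x_k,f(x_k)) \to (\infty,\overline{y})$ and regular normals $(u_k,-\lambda_k) \in \widehat{N}_{\mathrm{gph} f}(x_k,f(x_k))$ with $(u_k,-\lambda_k) \to (u,-1)$, whence $\lambda_k \to 1$ and in particular $\lambda_k > 0$ for all large $k$. The crux is the local claim that a regular normal to the graph with strictly negative vertical component is automatically a regular normal to the epigraph, that is, $(u_k,-\lambda_k) \in \widehat{N}_{\mathrm{epi} f}(x_k,f(x_k))$ for large $k$. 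Granting this, one more passage to the outer limit gives $(u,-1) \in N_{\mathrm{epi} f}(\infty,\overline{y})$, i.e.\ $u \in \partial f(\infty,\overline{y})$, which closes (i).

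The main obstacle is precisely this local claim, and it is where lower semicontinuity is indispensable; it is the localization at finite points of the classical graph-versus-epigraph comparison (cf.\ \cite[Theorem~8.9]{Rockafellar1998}), so it may either be cited or proved by a short argument. I would argue by contradiction: suppose points $(x_j,r_j) \in \mathrm{epi} f$ with $(x_j,r_j) \to (x,f(x))$ make the difference quotient defining regular normality to the epigraph have a strictly positive upper limit. Since $f(x_j) \le r_j \to f(x)$, lower semicontinuity forces $f(x_j) \to f(x)$, so $(x_j,f(x_j)) \to (x,f(x))$ is a genuine graph approach and the graph-normality of $(u_k,-\lambda_k)$ applies along it. Writing the epigraph numerator as the graph numerator minus $\lambda_k\,(r_j - f(x_j))$ with $r_j - f(x_j) \ge 0$, the subtracted term is nonpositive, and a short estimate bounding the graph-distance $\|(x_j,f(x_j)) - (x,f(x))\|$ by a multiple of the epigraph-distance $\|(x_j,r_j) - (x,f(x))\|$ plus a multiple of $r_j - f(x_j)$ lets the strictly negative coefficient $-\lambda_k$ absorb the excess, forcing the upper limit to be nonpositive, a contradiction. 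I expect this absorption estimate to be the only genuinely delicate point; everything else is bookkeeping with outer limits. I also note that the failure of the analogous claim for $\lambda_k = 0$ (upward-pointing normals to the graph have no epigraphical counterpart) is exactly why (ii) remains a one-sided inclusion.
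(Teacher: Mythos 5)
Your argument is correct, and it takes a genuinely different route from the paper's. The paper stays at the level of limiting objects: it uses Proposition~\ref{Prop312} to write $\partial f(\infty,\overline{y})$ and $\partial^{\infty} f(\infty,\overline{y})$ as outer limits of $\partial f(x_k)$ and $r_k\,\partial f(x_k)$ along $(x_k,f(x_k))\to(\infty,\overline{y})$, converts each finite-point subgradient into a coderivative by the scalarization result $\partial f(x)=D^*f(x)(1)$ quoted from \cite[Theorem~1.23]{Mordukhovich2018}, and passes to the limit with Proposition~\ref{Prop37}; for the reverse inclusion in (i) it divides by $v_k\to 1$ (so $v_k>0$ eventually) to get $u_k/v_k\in D^*f(x_k)(1)=\partial f(x_k)$, and for (ii) it rescales $u_k\in D^*f(x_k)(1)$ to $r_ku_k\in D^*f(x_k)(r_k)$. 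You instead compare the cones $N_{\mathrm{epi} f}(\infty,\overline{y})$ and $N_{\mathrm{gph} f}(\infty,\overline{y})$ directly: since the proposition preceding Proposition~\ref{Prop312} lets both be computed as outer limits of regular normal cones over the same approaches $(x,f(x))\to(\infty,\overline{y})$, and $\widehat{N}_{\mathrm{epi} f}(x,f(x))\subset\widehat{N}_{\mathrm{gph} f}(x,f(x))$ by monotonicity, you obtain $N_{\mathrm{epi} f}(\infty,\overline{y})\subset N_{\mathrm{gph} f}(\infty,\overline{y})$, hence (ii) and one half of (i) in one stroke; the other half rests on your local claim that a graph regular normal $(u_k,-\lambda_k)$ with $\lambda_k>0$ is an epigraph regular normal, which is in substance the regular-normal form of the cited scalarization theorem. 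Your sketch of that claim is sound: along epigraph points $(x_j,r_j)\to(x,f(x))$, lower semicontinuity together with $f(x_j)\le r_j$ forces $f(x_j)\to f(x)$, and with $s_j:=r_j-f(x_j)\ge 0$ one has $\|x_j-x\|+|f(x_j)-f(x)|\le\|x_j-x\|+|r_j-f(x)|+s_j$, so the epigraph difference quotient is at most $\epsilon_j\bigl(\|x_j-x\|+|r_j-f(x)|\bigr)+(\epsilon_j-\lambda_k)s_j$, which is dominated by $\epsilon_j$ times the epigraph distance once $\epsilon_j<\lambda_k$ --- exactly the absorption you describe. The trade-off: the paper's route is shorter because it outsources the finite-point equivalence to \cite{Mordukhovich2018} and reuses the limiting formulas already established, whereas yours is self-contained at the level of regular normals and makes visible precisely where lower semicontinuity and the sign condition $\lambda>0$ enter --- which also explains, as you correctly note, why (ii) is only a one-sided inclusion, consistent with the strictness example the paper gives immediately afterwards.
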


\begin{proof}
(i) Take any $u \in \partial f (\infty, \overline{y}).$ By Proposition~\ref{Prop312}, there are sequences $x_k \in \mathbb{R}^n$ and $u_k \in \partial f (x_k)$ such that $x_k \to \infty$, $f(x_k) \to \overline{y}$ and $u_k \to u.$ In view of \cite[Theorem~1.23]{Mordukhovich2018}, $u_k \in D^* f(x_k)(1)$. Letting $k \to \infty$ and employing Proposition~\ref{Prop37}, we obtain $u \in D^* f (\infty,\overline{y})(1).$

Conversely, let $u \in D^* f (\infty,\overline{y})(1)$. By Proposition \ref{Prop37}, we find sequences $x_k, u_k \in \mathbb{R}^n$ and $v_k \in \mathbb{R}$ with $u_k \in D^* f (x_k)(v_k)$ such that $x_k \to \infty, u_k \to u,$ $f (x_k) \to \overline{y}$ and $v_k \to 1.$
Then for all $k$ sufficiently large, we have $v_k \ne 0$ and so $\dfrac{u_k}{v_k} \in D^* f (x_k)(1),$ which, together with \cite[Theorem~1.23]{Mordukhovich2018}, yields $\dfrac{u_k}{v_k} \in \partial f (x_k).$ Letting $k \to \infty,$ we obtain $u \in \partial f (\infty, \overline{y}).$

(ii) Pick any $u \in \partial^{\infty} f (\infty, \overline{y}).$ By Proposition~\ref{Prop312}, there are sequences $x_k, u_k \in \mathbb{R}^n,$ with
$u_k \in \partial f (x_k),$ and $r_k \in \mathbb{R}$ such that $x_k \to \infty,$ $f(x_k) \to \overline{y},$ $r_k \searrow 0$ and $r_k u_k \to u$. By \cite[Theorem~1.23]{Mordukhovich2018}, we have $u_k \in D^* f(x_k)(1)$ and hence $r_k u_k \in D^* f(x_k)(r_k )$ as $r_k >0$.  Letting $k \to \infty$ and employing Proposition \ref{Prop37}, we obtain $u \in D^* f (\infty,\overline{y})(0).$
\end{proof}

\begin{remark}{\rm
The inclusion (ii) in the above proposition can be strict. A simple example on $\mathbb{R}^2$ is furnished by
 \begin{eqnarray*}
f(x_1, x_2) &:=& 
\begin{cases}
0 & \textrm{ if } x_1 < 0, \\
-\sqrt[3]{x_1} & \textrm{ if } x_1 \ge 0,
\end{cases}
\end{eqnarray*}
which has $J(f) = (-\infty, 0].$ For this function at $\overline{y} : = 0,$ we have
$\partial^{\infty} f (\infty, \overline{y}) = (-\infty, 0] \times \{0\},$ while $D^* f (\infty,\overline{y})(0) = (-\infty, +\infty) \times \{0\}.$
}\end{remark}

\section{Well-posedness and Mordukhovich's criterion at infinity} \label{Section4}

In this section, in the setting at infinity, basic well-posedness properties of set-values mappings will be characterized completely in terms of coderivatives.
Let us start with the following result in which the property in (ii) is openness around $(\infty, \overline{y})$ with linear rate $\mu,$ the property in (iii) is the metric regularity of $F$ around $(\infty, \overline{y})$ with constant $\ell,$ whereas the property in (iv) is the Lipschitz continuity of $F^{-1}$ around $(\overline{y}, \infty)$ with constant $\ell;$ for the related works, see
\cite{Aubin1984, Borwein1988, Mordukhovich1993, Penot1989, Rockafellar1985}

\begin{theorem}\label{DL41}
Let $F\colon \mathbb{R}^n\rightrightarrows \mathbb{R}^m$ be a set-valued mapping of closed graph, and let $\overline{y} \in J(F).$ Then the following conditions are equivalent:
\begin{enumerate}[\rm(i)]
\item {\rm (coderivative nonsingularity):} $\mathrm{ker}\,D^*F (\infty, \overline{y}) = \{0\}.$

\item {\rm (linear openness at infinity):} There exist constants $R > 0, \mu > 0, \epsilon > 0$ and a neighborhood $V$ of $\overline{y}$ in $\mathbb{R}^m$ such that
\begin{eqnarray*}
\big(F(x) + \mu r \mathbb{B} \big) \cap V &\subset& F(x+r\mathbb{B}) \quad \textrm{ whenever } \quad x \in \mathbb{R}^n \setminus  \mathbb{B}_R \ \textrm{ and } \  r \in (0, \epsilon). 
\end{eqnarray*}

\item {\rm (metric regularity at infinity):} There are constants $R > 0, \ell> 0, \gamma > 0,$ and a neighborhood $V$ of $\overline{y}$ in $\mathbb{R}^m$ such that
\begin{eqnarray*}
\mathrm{dist}(x, F^{-1}(y)) & \leq & \ell\, \mathrm{dist}(y, F(x))
\end{eqnarray*} 
for all $x \in \mathbb{R}^n \setminus \mathbb{B}_{R}$ and $y \in V$ with $\mathrm{dist}(y, F(x))  < \gamma.$
 
\item {\rm (inverse Lipschitz-like property at infinity):} There are constants $R > 0,  \ell > 0,$ and a neighborhood $V$ of $\overline{y}$ in $\mathbb{R}^m$ such that
\begin{eqnarray*}
F^{-1}(y') \cap \big( \mathbb{R}^n \setminus \mathbb{B}_{R}\big) & \subset & F^{-1}(y) + \ell\|y' - y\|\mathbb{B} \quad \textrm{for all} \quad y', y \in V. 
\end{eqnarray*} 
\end{enumerate}
\end{theorem}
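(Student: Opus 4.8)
The plan is to separate the purely metric equivalences (ii) $\Leftrightarrow$ (iii) $\Leftrightarrow$ (iv), which are mutual reformulations, from the coderivative characterization (iii) $\Leftrightarrow$ (i), the analogue at infinity of Mordukhovich's criterion. For (iii) $\Leftrightarrow$ (iv) I would argue by a direct distance computation: given $\|x\| \ge R$ and $y \in V$ with $d := \mathrm{dist}(y, F(x)) < \gamma$, pick $\widehat{y} \in F(x)$ with $\|\widehat{y} - y\| = d$ (possible as $\mathrm{gph}\,F$, hence $F(x)$, is closed); then $x \in F^{-1}(\widehat{y}) \cap (\mathbb{R}^n \setminus \mathbb{B}_R)$, and (iv) with $y' = \widehat{y}$ gives a point of $F^{-1}(y)$ within $\ell d$ of $x$, which is (iii), while the reverse implication reverses this computation using $y' \in F(x) \Rightarrow \mathrm{dist}(y, F(x)) \le \|y - y'\|$ and the closedness of $F^{-1}(y)$. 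The equivalence (ii) $\Leftrightarrow$ (iii) is the standard openness $=$ metric-regularity translation with $\ell = 1/\mu$: the inclusion in (ii) says exactly that $\mathrm{dist}(y, F(x)) \le \mu r$ forces $\mathrm{dist}(x, F^{-1}(y)) \le r$, which on setting $\mu r = \mathrm{dist}(y, F(x))$ becomes (iii). Throughout, the only nonroutine point is the bookkeeping of the unbounded $x$-neighborhood $\mathbb{R}^n \setminus \mathbb{B}_R$ against the bounded $y$-neighborhood $V$, i.e.\ enlarging $R$ and shrinking $V, \gamma$ by controlled amounts at each step.

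For the easy half (iii) $\Rightarrow$ (i), assume metric regularity with constant $\ell$. I claim that every regular normal $(u^*, -v^*) \in \widehat{N}_{\mathrm{gph}F}(x, y)$ with $\|x\| \ge R$ and $y \in V$ obeys $\|v^*\| \le \ell \|u^*\|$. Indeed, for a unit vector $d \in \mathbb{R}^m$ and small $t > 0$, metric regularity yields $x_t \in F^{-1}(y + td)$ with $\|x_t - x\| \le \ell t$ (using $y \in F(x)$, so $\mathrm{dist}(y + td, F(x)) \le t$); inserting $(x_t, y + td) \in \mathrm{gph}\,F$ into the regular-normal inequality and letting $t \searrow 0$ bounds $\langle v^*, d\rangle$ by $\ell\|u^*\|$, and the supremum over unit $d$ gives the claim. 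Now if $v \in \mathrm{ker}\,D^*F(\infty, \overline{y})$, then $(0, -v) \in N_{\mathrm{gph}F}(\infty, \overline{y})$, so by definition there are $(x_k, y_k) \xrightarrow{\mathrm{gph}F} (\infty, \overline{y})$ and regular normals $(u_k^*, -v_k^*) \in \widehat{N}_{\mathrm{gph}F}(x_k, y_k)$ with $u_k^* \to 0$ and $v_k^* \to v$; the claim forces $\|v_k^*\| \le \ell\|u_k^*\| \to 0$, so $v = 0$ and (i) holds.

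The hard half (i) $\Rightarrow$ (iii) is the main obstacle. First I would upgrade (i) to a uniform regular-normal bound: there exist $R, \ell, V$ with $\|v^*\| \le \ell\|u^*\|$ for all $(u^*, -v^*) \in \widehat{N}_{\mathrm{gph}F}(x, y)$ having $\|x\| \ge R$ and $y \in V$. Were this false, normalizing $\|v_k^*\| = 1$ would give regular normals with $u_k^* \to 0$ and, along a subsequence, $v_k^* \to v$ with $\|v\| = 1$; by the definition of $N_{\mathrm{gph}F}(\infty, \overline{y})$ this yields $(0, -v) \in N_{\mathrm{gph}F}(\infty, \overline{y})$ with $v \ne 0$, contradicting (i). The heart of the matter is then an Ekeland argument. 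Fix $x$ with $\|x\|$ large, $y \in V$, and $d := \mathrm{dist}(y, F(x)) < \gamma$, pick $\widehat{y} \in F(x)$ with $\|\widehat{y} - y\| = d$, and apply Theorem~\ref{Theorem210} to the lower semi-continuous function $(x', y') \mapsto \|y' - y\| + \delta_{\mathrm{gph}F}(x', y')$ (bounded below by $0$, with value $d$ at $(x, \widehat{y})$) with $\epsilon = d$ and $\lambda = (2 + \ell)d$. This produces $(\widetilde{x}, \widetilde{y}) \in \mathrm{gph}\,F$ with $\|(\widetilde{x}, \widetilde{y}) - (x, \widehat{y})\| \le \lambda$ minimizing $(x', y') \mapsto \|y' - y\| + (d/\lambda)\,\|(x', y') - (\widetilde{x}, \widetilde{y})\|$ over $\mathrm{gph}\,F$; since $\lambda \le (2 + \ell)\gamma$ and $\|x\|$ is large, the point stays in the region $\|\widetilde{x}\| \ge R$, $\widetilde{y} \in V$. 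If $\widetilde{y} \ne y$, then Fermat's rule (Lemma~\ref{Lemma2.8}) with the sum rule (Lemma~\ref{Lemma2.9}), the norm subdifferential (Lemma~\ref{Lemma2.7}), and $\partial \delta_{\mathrm{gph}F} = N_{\mathrm{gph}F}$ (Lemma~\ref{Lemma2.6}) furnish $(u^*, -v^*) \in N_{\mathrm{gph}F}(\widetilde{x}, \widetilde{y})$ with $\|u^*\| \le d/\lambda$ and $\|v^*\| \ge 1 - d/\lambda$; the uniform bound (which passes to limiting normals by approximation within the region) then forces $1 - d/\lambda \le \ell\,d/\lambda$, i.e.\ $\lambda \le (1 + \ell)d$, contradicting $\lambda = (2 + \ell)d$. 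Hence $\widetilde{y} = y$, so $\widetilde{x} \in F^{-1}(y)$ and $\mathrm{dist}(x, F^{-1}(y)) \le \|x - \widetilde{x}\| \le \lambda = (2 + \ell)\,\mathrm{dist}(y, F(x))$, giving (iii). The delicate points are exactly the calibration of $\epsilon, \lambda$ so that the Ekeland minimizer remains in the unbounded-$x$, bounded-$y$ region where the uniform bound is available, and ensuring that the $v^*$-component of the extracted normal stays bounded away from $0$.
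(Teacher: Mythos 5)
Your proof is correct, but its architecture differs genuinely from the paper's. The paper runs the cycle (i) $\Rightarrow$ (ii) $\Rightarrow$ (iii) $\Rightarrow$ (iv) $\Rightarrow$ (i): the hard step (i) $\Rightarrow$ (ii) is a proof by contradiction along sequences $(x_k,y_k)\to(\infty,\overline{y})$, $r_k\searrow 0$, in which Ekeland's principle is applied with the $r_k$-dependent weighted metric $\|x-x'\|+\sqrt{r_k}\,\|y-y'\|$, producing coderivative elements with $\|u_k\|\le\mu$, $\|v_k\|\to 1$ that violate the quantity $\mu_*=\inf\{\|u\| : u\in D^*F(\infty,\overline{y})(v),\ \|v\|=1\}>0$; and (iv) $\Rightarrow$ (i) perturbs $y_k$ in the direction $v_k/\|v_k\|$ and feeds the resulting preimage point into the Fr\'echet normal inequality. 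You instead make (iii) the hub: you first uniformize (i) into a pointwise regular-normal estimate $\|v^*\|\le\ell\|u^*\|$ on the region $\|x\|\ge R$, $y\in V$ (a compactness/normalization step that plays the role of the paper's $\mu_*>0$), then run a single direct Ekeland argument at a fixed point with the ordinary Euclidean metric and the calibration $\epsilon=d$, $\lambda=(2+\ell)d$, forcing $\widetilde{y}=y$ and yielding metric regularity with the explicit constant $2+\ell$; your (iii) $\Rightarrow$ (i) is the metric-regularity twin of the paper's (iv) $\Rightarrow$ (i), deriving the normal estimate from $\mathrm{dist}(x,F^{-1}(y+td))\le\ell t$ rather than from the inverse Lipschitz inclusion. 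What the paper's route buys is sharp modulus information: it delivers openness with any rate $\mu<\mu_*$ directly tied to the inner norm of the coderivative at infinity, whereas your constant $2+\ell$ is looser (though your own calibration shows any $\lambda=(1+\ell+\delta)d$ works, so you can approach $1+\ell$). What your route buys is a cleaner, more constructive argument -- one Ekeland application at a fixed point, no weighted metric, no limiting passage inside the contradiction -- in the style of the standard proof of the Mordukhovich criterion, together with a transparent separation of the purely metric equivalences (ii) $\Leftrightarrow$ (iii) $\Leftrightarrow$ (iv) from the variational content. Two small items to tidy in a final write-up, both within the bookkeeping you already flagged: treat $d=\mathrm{dist}(y,F(x))=0$ separately, since Theorem~\ref{Theorem210} requires $\epsilon>0$ (the conclusion is then trivial by closedness of $F(x)$ and the graph); and note that passing your uniform regular-normal bound to the limiting normal cone at $(\widetilde{x},\widetilde{y})$ requires $\|\widetilde{x}\|>R$ strictly and $\widetilde{y}$ interior to $V$, which your enlargement of $R$ and shrinking of $V,\gamma$ already secures.
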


\begin{proof} 
(i) $\Rightarrow$ (ii). Assume that $\mathrm{ker}\, D^*F (\infty, \overline{y}) = \{0\}.$ Since $D^*F (\infty, \overline{y}) $ is a closed positively homogeneous mapping, we have
\begin{eqnarray*}
\mu_* &:=& \inf\,\{\|u\| \mid u\in D^*F (\infty, \overline{y})(v) \textrm{ and }  \|v\| = 1 \} \ > \ 0.
\end{eqnarray*} 
Take any $\mu \in (0, \mu_*).$ We will show that there exist constants $R > 0, \epsilon > 0$ and a neighborhood $V$ of $\overline{y}$ in $\mathbb{R}^m$ such that
\begin{eqnarray*}
\big(F(x) + \mu r \mathbb{B} \big) \cap V &\subset& F(x + r \mathbb{B}) \quad \textrm{ whenever } \quad x \in \mathbb{R}^n \setminus  \mathbb{B}_R \ \textrm{ and } \ r \in (0, \epsilon). 
\end{eqnarray*}
By contradiction, there are sequences $(x_k, y_k) \to (\infty, \overline{y})$ and $r_k \searrow 0$ as well as $z_k\in\mathbb{R}^m$ satisfying the following conditions
\begin{eqnarray*}
y_k\in F(x_k), \quad \|z_k-y_k\|\leq \mu r_k, \quad z_k \notin F(x_k + r_k \mathbb{B}).
\end{eqnarray*}
Fix $k$ and define the function $\varphi \colon \mathbb{R}^n\times\mathbb{R}^m \to \overline{\mathbb{R}}$ by
\begin{eqnarray*}
\varphi (x, y) & := & \|y-z_k\|+\delta_{\mathrm{gph} F}(x, y),
\end{eqnarray*}
which is lower semi-continuous as the graph of $F$ is closed. Clearly $\inf_{(x, y) \in \mathbb{R}^n\times\mathbb{R}^m} \varphi (x, y) = 0$ and $\varphi (x_k, y_k) \to 0$ as $k \to \infty.$
Applying the Ekeland variational principle (see Theorem~\ref{Theorem210}) to $\varphi$ with $\epsilon := \mu r_k, \lambda := r_k,$ the initial point $(x_k, y_k),$ and the metric space 
$(\mathbb{R}^n\times\mathbb{R}^m, d),$ where 
\begin{eqnarray*}
d((x, y), (x', y')) &:=& \|x - x'\| + \sqrt{r_k} \|y - y'\| \quad \textrm{ for } \quad (x, y), (x', y') \in \mathbb{R}^n\times\mathbb{R}^m,
\end{eqnarray*}
we find a pair $(\overline{x}_k, \overline{y}_k) \in \mathrm{gph}\, F$ such that 
\begin{eqnarray*}
\|\overline{x}_k - x_k\| + \sqrt{r_k} \|\overline y_k - y_k\| & \leq & r_k
\end{eqnarray*}
and $(\overline{x}_k, \overline y_k)$ is a global minimizer of the lower semi-continuous function 
\begin{eqnarray*}
\psi \colon \mathbb{R}^n \times \mathbb{R}^m \to\overline{\mathbb{R}}, && (x, y) \mapsto \|y - z_k\| + {\mu} \big(\|x - \overline{x}_k\| + \sqrt{r_k} \|y - \overline{y}_k\| \big) + \delta_{\mathrm{gph} F}(x, y).
\end{eqnarray*}
By Fermat's rule (see Lemma~\ref{Lemma2.8}), then $(0, 0) \in \partial \psi (\overline{x}_k, \overline{y}_k).$ Note that $\overline{y}_k \ne z_k$ as $\overline{y}_k \in F(\overline{x}_k ) \subset F(x_k + r_k \mathbb{B})$ and $z_k \notin F(x_k + r_k \mathbb{B}).$ In view of Lemmas~\ref{Lemma2.6}, \ref{Lemma2.7} and \ref{Lemma2.9}, we have
\begin{eqnarray*}
(0, 0)  &\in& \{0\} \times \frac{\overline{y}_k - z_k}{\|\overline{y}_k - z_k\|}  + {\mu} \big( \mathbb{B} \times \sqrt{r_k} \mathbb{B} \big) + {N}_{\mathrm{gph} F}(\overline{x}_k, \overline{y}_k).
\end{eqnarray*}
Hence, there is a vector $(u_k, v_k) \in \mathbb{R}^n \times \mathbb{R}^m$ satisfying
\begin{eqnarray*}
\|u_k\| \leq {\mu},  \quad  1 - \mu \sqrt{r_k}  \le \|v_k\| \le 1 + \mu \sqrt{r_k} \quad \textrm{ and } \quad  u_k \in D^*F(\overline{x}_k, \overline{y}_k)(v_k).
\end{eqnarray*}
Passing to subsequences if necessary, we may assume that $(u_k, v_k)\to (u, v).$ Certainly $\|u\|\leq \mu$ and $\|v\| = 1.$ Observe that $(\overline{x}_k,  \overline{y}_k) \to (\infty, \overline{y}).$ Hence $u \in D^* F(\infty, \overline y)(v),$ which contradicts the choice of $\mu.$

(ii) $\Rightarrow$ (iii). Let $\ell := \frac{1}{\mu^{-1}}$ and $0 < \gamma < \mu \epsilon.$ Take any $x \in \mathbb{R}^n \setminus \mathbb{B}_R$ and $y\in V$ 
with $\mathrm{dist}(y, F(x))  < \gamma$ and put $r := \ell  \mathrm{dist}(y, F(x)) < \epsilon.$ Without loss of generality we can assume that $r > 0$ because otherwise the conclusion is trivial. Since the graph of $F$ is closed, there is $y' \in F(x)$ such that 
\begin{eqnarray*}
\|y - y'\| &=& \mathrm{dist}(y, F(x)) \ = \ \frac{r}{\ell}  \ = \ \mu r.
\end{eqnarray*} 
Hence
\begin{eqnarray*}
y &\in & y' + \mu r \mathbb{B} \ \subset \ F(x) + \mu r \mathbb{B}.
\end{eqnarray*} 
The condition (ii) implies that $y \in F(x + r \mathbb{B}).$ Therefore
\begin{eqnarray*}
\mathrm{dist}(x, F^{-1}(y)) &\leq&  r \ = \ \ell \mathrm{dist}(y, F(x)),
\end{eqnarray*}
as required.

(iii) $\Rightarrow$ (iv). Choose a positive constant $\epsilon < \gamma$ such that $\mathbb{B}_{\epsilon}(\overline{y}) \subset V.$  Let $y, y' \in \mathbb{B}_{\epsilon/2}(\overline{y})$ and take any $x \in F^{-1}(y') \cap \big( \mathbb{R}^n \setminus \mathbb{B}_{R}\big).$ We have $y \in V$ and 
\begin{eqnarray*}
\mathrm{dist}(y, F(x)) &\leq& \|y - y'\| \ \le \ \|y - \overline{y}\| + \|y' - \overline{y}\| \ \le \ \epsilon \ < \ \gamma.
\end{eqnarray*} 
The condition (iii) implies that
\begin{eqnarray*}
\mathrm{dist}(x, F^{-1}(y)) &\leq& \ell \, \mathrm{dist}(y, F(x)) \ \leq \ \ell \, \|y' - y\|,
\end{eqnarray*} 
which yields the desired inclusion.

(iv) $\Rightarrow$ (i). Assume that there are constants $R > 0,  \ell > 0$ and $\epsilon > 0$ such that
\begin{eqnarray}\label{3PT3}
F^{-1}(y') \cap \big( \mathbb{R}^n \setminus \mathbb{B}_{R}\big) & \subset & F^{-1}(y) + \ell\|y - y'\|\mathbb{B} \quad \textrm{for all} \quad y, y' \in V := \mathbb{B}_{\epsilon}(\overline{y}).
\end{eqnarray}
We will show that $\mathrm{ker}\, D^*F (\infty, \overline{y}) = \{0\}.$
To see this, let $v \in \mathbb{R}^m$ be such that $0 \in D^*F(\infty, \overline{y})(v).$ By Proposition~\ref{Prop37}, there exist sequences $(x_k, y_k) \in \mathrm{gph} F$ and $(u_k, -v_k)\in \widehat{N}_{\mathrm{gph} F}(x_k, y_k)$ such that $(x_k, y_k) \to (\infty, \overline{y})$ and $(u_k, v_k) \to (0, v)$ as $k \to \infty.$ In particular, $x_k \in \mathbb{R}^n \setminus \mathbb{B}_R$ and $y_k \in \overline{y} + \dfrac{\epsilon}{2}\mathbb{B}$ for all $k$ sufficiently large. If $v_k = 0$ for infinitely many $k,$ then $v = 0$ and there is nothing to prove. So, suppose that $v_k \ne 0$ for all $k.$

By the definition of  $\widehat{N}_{\mathrm{gph} F}(x_k, y_k),$ for each $k > 0$ we can find $\delta_k > 0$ such that 
\begin{eqnarray}\label{3PT4}
\langle u_k, x - x_k\rangle - \langle v_k, y - y_k\rangle & \leq & \frac{1}{k} \big(\|x - x_k\| + \|y - y_k\|\big)
\end{eqnarray}   
for all $(x, y) \in \mathrm{gph} F\cap \big (\mathbb{B}_{\delta_k}(x_k) \times \mathbb{B}_{\delta_k}(y_k)\big).$ 

Fix $k$ sufficiently large. Let $y' := y_k$ and $y := y_k - \frac{1}{\ell} r \frac{v_k}{\|v_k\|}$ with $0 < r < \min\{\delta_k, \dfrac{\ell \epsilon}{2} \}.$  Certainly $x_k \in F^{-1}(y') \setminus \mathbb{B}_R$ and $y, y' \in V.$ This, together with  the inclusion~\eqref{3PT3}, ensures the existence of $x \in F^{-1}(y)$ such that 
\begin{eqnarray*}
\|x - x_k\| & \le & \ell \|y - y'\| \ \le \ r.
\end{eqnarray*}
Therefore, by the inequality~\eqref{3PT4}, we get easily that
\begin{eqnarray*}
\frac{1}{\ell} r \|v_k\| &\leq& r \|u_k\|  + \frac{1}{k}( r + \frac{1}{\ell} r) ,
\end{eqnarray*} 
which yields
\begin{eqnarray*}
\|v_k\| &\leq& \ell \|u_k\|  + \frac{1}{k}(\ell + 1).
\end{eqnarray*} 
Finally, letting $k\to \infty$ we obtain $v = 0$ as required.
\end{proof}

\begin{example}{\rm 
Consider the set-valued mapping $F \colon \mathbb{R} \rightrightarrows \mathbb{R}$ defined by
$$F(x) := \{0\}\cup [x, +\infty).$$
We have $J(F) = \{0\},$ $N_{\mathrm{gph} F}(\infty, 0) = \{0\}\times \mathbb{R}$ and
$\mathrm{ker}\,D^*F (\infty, 0) = \mathbb{R}.$ Hence $F$ does not satisfy the equivalent conditions (ii)-(iv) in Theorem~\ref{DL41}.
}\end{example}

A set-valued mapping $F \colon \mathbb{R}^n\rightrightarrows \mathbb{R}^m$ can be identified in a set with its 
{\em distance function}
\begin{equation*}
d_F \colon \mathbb{R}^n \times \mathbb{R}^m\to \overline{\mathbb{R}}, \quad (x, y) \mapsto \mathrm{dist} \big (y, F(x) \big) = \inf \{\|y-v\|\;|\; v\in F(x)\}.
\end{equation*}
Clearly $F$ is uniquely determined by $d_F,$ so every property of $F$ must correspond to a property of $d_F$ and vice versa. The following result reveals the property of $d_F$ that corresponds to Lipschitz-like continuity of $F$ and indicates obviously why that notion has a natural significance; see also \cite[Theorem~2.3]{Rockafellar1985}.

\begin{proposition}[distance characterization of the Lipschitz-like property at infinity] \label{Prop43}
Let $F \colon \mathbb{R}^n\rightrightarrows \mathbb{R}^m$ be a set-valued mapping whose graph is closed, and let $\overline{y} \in J(F).$  The following conditions are equivalent:
\begin{enumerate}[{\rm (i)}]
\item $F$ is Lipschitz-like around $(\infty, \overline{y}),$ i.e., there exist constants $R > 0, \ell > 0$ and a neighborhood $V$ of $\overline{y}$ in $\mathbb{R}^m$ satisfying the inclusion
\begin{eqnarray}\label{3PT5}
F(x) \cap V &\subset& F(x') + \ell\|x - x'\| \mathbb{B} \quad \text{ for all } \quad x, x' \in F^{-1}(V) \setminus  \mathbb{B}_R. 
\end{eqnarray}

\item There exist constants $R > 0, \ell > 0$ and a neighborhood $V$ of $\overline{y}$ in $\mathbb{R}^m$ such that
\begin{eqnarray*}
\left | d_F(x, y) - d_F(x', y') \right| &\le& \ell \big (\|x - x'\| + \|y - y'\| \big)
\end{eqnarray*}
for all $x, x' \in F^{-1}(V) \setminus  \mathbb{B}_R$ and $y, y' \in V.$
\end{enumerate}
\end{proposition}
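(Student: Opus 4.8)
The plan is to prove the two implications separately, exploiting the elementary fact that for each fixed $x$ the function $y \mapsto d_F(x, y) = \mathrm{dist}(y, F(x))$ is globally $1$-Lipschitz; this concentrates the entire difficulty in the dependence of $d_F$ on the first variable. Throughout I would use that a closed graph forces each $F(x)$ to be closed, so that distances to nonempty values $F(x)$ are attained.

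For the implication (ii) $\Rightarrow$ (i), which I expect to be routine, suppose (ii) holds with constants $R, \ell$ and neighborhood $V$. Fix $x, x' \in F^{-1}(V) \setminus \mathbb{B}_R$ and take any $v \in F(x) \cap V$. Then $d_F(x, v) = 0$, and applying (ii) with $y = y' = v$ gives $d_F(x', v) = |d_F(x, v) - d_F(x', v)| \le \ell\|x - x'\|$. Since $x' \in F^{-1}(V)$ makes $F(x')$ nonempty and closed, this distance is attained at some $w \in F(x')$ with $\|v - w\| \le \ell\|x - x'\|$, i.e.\ $v \in F(x') + \ell\|x - x'\|\mathbb{B}$. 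This is exactly the inclusion~\eqref{3PT5} with the same $V, R, \ell$.

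For the implication (i) $\Rightarrow$ (ii), let (i) hold with constants $R_0, \ell_0$ and neighborhood which, after shrinking, I may take to be a closed ball $V_0 = \mathbb{B}_{\beta}(\overline{y})$; set $V := \mathbb{B}_{\beta/4}(\overline{y})$ and $R := R_0$. Using the decomposition
\[
|d_F(x, y) - d_F(x', y')| \le |d_F(x, y) - d_F(x', y)| + |d_F(x', y) - d_F(x', y')|,
\]
the second term is at most $\|y - y'\|$ by the $1$-Lipschitz property in the second variable, so it remains to bound $|d_F(x, y) - d_F(x', y)|$ by $\ell_0\|x - x'\|$ for $x, x' \in F^{-1}(V) \setminus \mathbb{B}_R$ and $y \in V$. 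Here $x \in F^{-1}(V)$ supplies $v_0 \in F(x) \cap V$, whence $d_F(x, y) \le \|y - v_0\| < \beta/2$; consequently, for small $\varepsilon > 0$, any $v^* \in F(x)$ with $\|y - v^*\| \le d_F(x, y) + \varepsilon$ satisfies $\|v^* - \overline{y}\| < \beta$, so $v^* \in V_0$. The Lipschitz-like inclusion then yields $w \in F(x')$ with $\|v^* - w\| \le \ell_0\|x - x'\|$, giving $d_F(x', y) \le \|y - v^*\| + \ell_0\|x - x'\| \le d_F(x, y) + \varepsilon + \ell_0\|x - x'\|$. Letting $\varepsilon \searrow 0$ and interchanging the roles of $x$ and $x'$ (legitimate since both lie in $F^{-1}(V) \setminus \mathbb{B}_R$) produces the two-sided estimate, and taking $\ell := \max\{\ell_0, 1\}$ completes (ii).

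The main obstacle is the localization step inside (i) $\Rightarrow$ (ii): because the Lipschitz-like inclusion only controls $F(x) \cap V_0$, I must guarantee that the approximate nearest points to $y$ in $F(x)$ genuinely lie in $V_0$. This is precisely where the hypothesis $\overline{y} \in J(F)$ and the restriction to $x \in F^{-1}(V) \setminus \mathbb{B}_R$ are used: shrinking the $y$-neighborhood to $V = \mathbb{B}_{\beta/4}(\overline{y})$ and invoking $F(x) \cap V \neq \emptyset$ forces $d_F(x, y)$ to be small, which confines the relevant minimizers to $V_0$. Once this confinement is secured, the remaining estimates reduce to the triangle inequality and the standard nonexpansiveness of distance functions.
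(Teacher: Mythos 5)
Your proof is correct and takes essentially the same route as the paper's: both reduce (ii) to the first-variable estimate via the $1$-Lipschitz dependence of $d_F(x,\cdot)$, handle (ii) $\Rightarrow$ (i) by plugging in $y \in F(x)\cap V$ with attainment from closedness of $F(x')$, and prove (i) $\Rightarrow$ (ii) by shrinking the neighborhood of $\overline{y}$ so that (near-)nearest points of $F(x)$ to $y$ stay inside the window where the Lipschitz-like inclusion applies. The only cosmetic difference is that you work with $\varepsilon$-approximate nearest points where the paper uses exact projections to establish $\mathrm{dist}\big(y, F(x)\cap V\big) = \mathrm{dist}\big(y, F(x)\big)$ on the smaller neighborhood.
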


\begin{proof}
Observe first that (ii) is equivalent to the existence of constants $R > 0, \ell > 0$ and a neighborhood $V$ of $\overline{y}$ in $\mathbb{R}^m$ such that
for all $x, x' \in F^{-1}(V) \setminus  \mathbb{B}_R$ and $y \in V$ we have 
\begin{eqnarray} \label{3PT6}
\mathrm{dist}\big(y, F(x') \big)  &\le& \mathrm{dist}\big(y, F(x)\big)  + \ell \|x - x'\|
\end{eqnarray}
This is true because the function $d_F(x, y)$ is Lipschitz in $y$ with modulus $1$ for all $x \in F^{-1}(V).$

(ii) $\Rightarrow$ (i). If the inequality~\eqref{3PT6} holds for constants $R > 0, \ell > 0$ and some neighborhood $V$ of $\overline{y},$ then for every $x, x' \in F^{-1}(V) \setminus  \mathbb{B}_R$ and $y \in F(x) \cap V$ we have
\begin{eqnarray*}
\mathrm{dist}\big(y, F(x')\big) &\le& \mathrm{dist}\big(y, F(x)\big) + \ell \|x - x'\| \ = \ \ell \|x - x'\| ,
\end{eqnarray*}
because $\mathrm{dist}\big(y, F(x)\big)  = 0.$ Hence $y \in F(x') + \ell\|x - x'\| \mathbb{B},$ as required in \eqref{3PT5} for Lipschitz-like continuity.

(i) $\Rightarrow$ (ii). Assume that \eqref{3PT5}  holds for constants $R > 0, \ell > 0$ and some neighborhood $V$ of $\overline{y}.$ Then for all $x, x' \in F^{-1}(V) \setminus  \mathbb{B}_R$ and $y \in V$ we have
\begin{eqnarray*}
\mathrm{dist}\big(y, F(x') + \ell\|x - x'\| \mathbb{B} \big)  &\le& \mathrm{dist}\big(y, F(x) \cap V \big).
\end{eqnarray*}
Observe that
\begin{eqnarray*}
\mathrm{dist}\big(y, F(x')\big) - \rho &\le& \mathrm{dist}\big(y, F(x) + \rho \mathbb{B}\big)
\end{eqnarray*}
for any $\rho \ge 0,$ so
\begin{eqnarray*}
\mathrm{dist}\big(y, F(x')\big) - \ell \|x - x'\| &\le& \mathrm{dist}\big(y, F(x) \cap V\big).
\end{eqnarray*}
Therefore, it suffices to show that for a neighborhood $V'$ of $\overline{y}$ with $V' \subset V$ we have for all $x \in F^{-1}(V')$ and $y \in V',$ 
\begin{eqnarray*}
\mathrm{dist}\big(y, F(x) \cap V\big) &=& \mathrm{dist}\big(y, F(x)\big).
\end{eqnarray*}
To see this, choose $\epsilon > 0$ small enough that $\overline{y} + \epsilon \mathbb{B} \subset V,$ and let $V' := \overline{y} + (\epsilon/3) \mathbb{B}.$ Then $V' \subset V,$ and for arbitrary $x \in F^{-1}(V')$ and $y \in V'$ we have
\begin{eqnarray*}
\mathrm{dist}\big(y, F(x) \big) &\le& \|y - \overline{y}\| + \mathrm{dist}\big(\overline{y}, F(x) \big) \ \le \ \frac{\epsilon}{3} + \frac{\epsilon}{3} \ = \ \frac{2\epsilon}{3}.
\end{eqnarray*}
Since $F(x)$ is closed, there exists $y' \in F(x)$ such that $\|y - y'\| = \mathrm{dist}\big({y}, F(x) \big).$ Hence
\begin{eqnarray*}
\|y' - \overline{y} \| &\le& \|y' - y \| + \|y - \overline{y}\| \ \le \ \frac{2\epsilon}{3} + \frac{\epsilon}{3} \ = \ \epsilon,
\end{eqnarray*}
which yields $y' \in V,$ and so $\mathrm{dist}\big(y, F(x) \cap V\big) = \mathrm{dist}\big(y, F(x)\big),$ as required.
\end{proof}

It is well known from Mordukhovich's criterion (see \cite[Theorem~3.3(iii)]{Mordukhovich2018}) that 
for a closed graph set-valued mapping $F\colon \mathbb{R}^n\rightrightarrows \mathbb{R}^m$ and a point $(\overline{x}, \overline{y}) \in \mathrm{gph}F$ we have $D^*F(\overline{x}, \overline{y})(0) = \{0\}$ if and only if $F$ is Lipschitz-like around $(\overline{x}, \overline{y}),$ which means that there are a constant $\ell > 0$ and neighborhoods $U$ of  $\overline{x}$ and $V$ of $\overline{y}$ such that
\begin{eqnarray*}
F(x) \cap V &\subset& F(x') + \ell\|x - x'\| \mathbb{B} \quad \text{ for all } \quad x, x' \in U. 
\end{eqnarray*}
On the other hand, it will be demonstrated in Theorem~\ref{Thm45} and Example~\ref{VD44}(i) below that if $D^*F(\infty, \overline{y})(0) = \{0\}$ then $F$ is Lipschitz-like around $(\infty, \overline{y}),$ but not conversely.

\begin{example}\label{VD44} {\rm
(i) Consider the set-valued mapping $F \colon \mathbb{R} \rightrightarrows \mathbb{R}$ defined by
\begin{eqnarray*}
F(x) &:=& 
\begin{cases}
\{\frac{1}{x + 1} \} & \textrm{ if } x \in \mathbb{N},\\
\emptyset & \textrm{otherwise,}
\end{cases}
\end{eqnarray*}
which has $J(F) = \{0\}.$ By definition, it is easy to check that $D^*F(\infty, 0)(0) = \mathbb{R},$ while $F$ is Lipschitz-like around $(\infty, 0)$ with modulus $1.$ Note that for any open neighborhood $V$ of $0$ in $\mathbb{R},$ the inverse image $F^{-1}(V)$ is not open.

(ii) Consider the set-valued mapping $F \colon \mathbb{R}^2 \rightrightarrows \mathbb{R}$ defined by $F(x_1, x_2) := \{x_1 - x_2^2\}.$
We have $0 \in J(F)$ and $D^*F(\infty, 0)(0) = \{0\} \times \mathbb{R}.$ Moreover, $F$ is not Lipschitz-like around $(\infty, 0),$ while $F^{-1}(V)$ is open for any open subset $V$ of $\mathbb{R}.$
}\end{example}

The equivalence of (i) with  (ii) in the next result can be seen as a {\em version at infinity of Mordukhovich's criterion}.

\begin{theorem}\label{Thm45}
Let $F\colon \mathbb{R}^n\rightrightarrows \mathbb{R}^m$ be a set-valued mapping of closed graph, and let $\overline{y} \in J(F).$ 
The following conditions are equivalent:
\begin{enumerate}[{\rm (i)}]
\item $D^*F(\infty, \overline{y})(0)=\{0\}.$

\item There exist constants $R > 0, \ell > 0$ and an open neighborhood $V$ of $\overline{y}$ in $\mathbb{R}^m$ such that 
the set $F^{-1}(V) \setminus \mathbb{B}_R$ is open and one has
\begin{eqnarray*}
F(x) \cap V &\subset& F(x') + \ell\|x - x'\| \mathbb{B} \quad \text{ for all } \quad x, x' \in F^{-1}(V) \setminus  \mathbb{B}_R. 
\end{eqnarray*}

\item There exist constants $R > 0, \ell > 0$ and a neighborhood $V$ of $\overline{y}$ in $\mathbb{R}^m$ such that the function $d_F$ is Lipschitz continuous with modules $\ell$ on some neighborhood of each point of $\mathrm{gph} F \cap \big( (\mathbb{R}^n \setminus  \mathbb{B}_R) \times V \big).$

\item $\mathscr{F}^\infty = \{0\}.$
\end{enumerate}
When these equivalent conditions hold, one has moreover that
\begin{eqnarray*}
\|u\| &\leq& \ell \quad \textrm{ whenever } \quad u \in \mathscr{F},
\end{eqnarray*}
where we let 
\begin{eqnarray*}
\mathscr{F} &:=& \mathop{\mathop {\Limsup}\limits_{(x, y) \xrightarrow{\mathrm{gph} F}(\infty, \overline{y})}} { \partial d_F (x, y)}
\quad \textrm{ and } \quad 
\mathscr{F}^\infty \ := \  \mathop{\mathop {\Limsup}\limits_{(x, y) \xrightarrow{\mathrm{gph} F}(\infty, \overline{y})} }
\limits_{r  \searrow 0} {r \partial d_F (x, y)}.
\end{eqnarray*}
\end{theorem}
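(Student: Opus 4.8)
The plan is to route the entire argument through the distance function $d_F$ and to read $\mathscr{F}$ and $\mathscr{F}^\infty$ as the limiting and singular subdifferentials of $d_F$ at infinity. The backbone is an identity valid at every graph point: the singular subgradients of $d_F$ are exactly the horizontal normals to the graph, namely $\partial^\infty d_F(x,y) = D^*F(x,y)(0)\times\{0\}$, together with the inclusion $\partial d_F(x,y)\subseteq\{(u,v)\mid \|v\|\le 1,\ u\in D^*F(x,y)(-v)\}$. I would obtain both from the marginal-function representation $d_F(x,y)=\inf_w\{\|y-w\|+\delta_{\mathrm{gph}F}(x,w)\}$, the sum rule (Lemma~\ref{Lemma2.9}), and the fact that $\|y-w\|$ is globally Lipschitz, so it contributes nothing to the singular part. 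Taking outer limits along the graph as $(x,y)\to(\infty,\overline{y})$ and invoking Propositions~\ref{Prop33} and~\ref{Prop37} then upgrades the first identity to $\mathscr{F}^\infty = D^*F(\infty,\overline{y})(0)\times\{0\}$, which gives the equivalence (i)~$\Leftrightarrow$~(iv) immediately.

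Next I would prove (iv)~$\Rightarrow$~(iii) by transporting the subdifferential criterion for Lipschitz continuity to infinity. Suppose $d_F$ failed to be Lipschitz with a common modulus on neighborhoods of the graph points $(x,y)$ with $\|x\|$ large and $y$ near $\overline{y}$; running the contradiction with shrinking data $R=k$ and $V=\mathbb{B}_{1/k}(\overline{y})$, one produces base points escaping to $(\infty,\overline{y})$ that carry limiting subgradients of $d_F$ of norm tending to $\infty$. Normalizing these subgradients by their norms and passing to the limit yields a nonzero element of $\mathscr{F}^\infty$, contradicting (iv). The same normalization shows the modulus may be taken to be any $\ell>\sup\{\|u\|\mid u\in\mathscr{F}\}$, which is finite.

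For (iii)~$\Rightarrow$~(ii) I would invoke Proposition~\ref{Prop43}: local Lipschitz continuity of $d_F$ yields the two-sided distance estimate, hence the Lipschitz-like inclusion for $F$. The extra requirement that $F^{-1}(V)\setminus\mathbb{B}_R$ be open comes for free from finiteness of $d_F$: if $x_0\in F^{-1}(V)\setminus\mathbb{B}_R$, choose $y_0\in F(x_0)\cap V$; since $d_F$ is finite and continuous near $(x_0,y_0)$, nearby points $x$ still meet $V$ through $F$, so $x_0$ is interior. Finally, for (ii)~$\Rightarrow$~(i) I would argue directly on regular normals, in the spirit of (iv)~$\Rightarrow$~(i) of Theorem~\ref{DL41}: take $u\in D^*F(\infty,\overline{y})(0)$, obtain by Proposition~\ref{Prop37} sequences $(x_k,y_k)\xrightarrow{\mathrm{gph}F}(\infty,\overline{y})$ and $(u_k,-v_k)\in\widehat{N}_{\mathrm{gph}F}(x_k,y_k)$ with $u_k\to u$ and $v_k\to 0$, use openness of $F^{-1}(V)\setminus\mathbb{B}_R$ to keep perturbed base points $x'$ inside the domain, and use the Lipschitz-like inclusion to produce $y'\in F(x')$ with $\|y'-y_k\|\le\ell\|x'-x_k\|$; feeding $(x',y')$ into the regular-normal inequality and choosing $x'-x_k$ along $u_k$ gives $\|u_k\|\le\ell\|v_k\|+o(1)$, so $u=0$. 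Here openness is indispensable, which is precisely the phenomenon behind Example~\ref{VD44}(i). The concluding estimate $\|u\|\le\ell$ for $u\in\mathscr{F}$ then follows because, under these equivalent conditions, $d_F$ is Lipschitz with modulus $\ell$ near graph points at infinity, so every limiting subgradient there has norm at most $\ell$, and this bound is preserved under the outer limit defining $\mathscr{F}$.

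I expect the main obstacle to be the reverse half of the distance--coderivative identity at infinity, namely $D^*F(\infty,\overline{y})(0)\times\{0\}\subseteq\mathscr{F}^\infty$, since it requires realizing each horizontal limiting normal as a limit of scaled subgradients of $d_F$ at \emph{genuine} graph points while controlling the normalization through the outer-limit-at-infinity; the off-graph behavior of $d_F$ must be handled carefully here. A secondary technical point is securing the uniformity of both the modulus and the neighborhoods in (iv)~$\Rightarrow$~(iii), which is why I would organize that step as a single contradiction argument with simultaneously shrinking $V$ and growing $R$.
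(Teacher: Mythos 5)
Your proposal has two genuine gaps, and they sit exactly where the paper spends its real effort. The decisive one is your step (iii) $\Rightarrow$ (ii). Proposition~\ref{Prop43} is an equivalence between two \emph{uniform, region-wide} conditions: its distance estimate must hold for all pairs $x, x' \in F^{-1}(V)\setminus\mathbb{B}_R$ and $y, y' \in V$ simultaneously. Condition (iii) of the theorem gives only \emph{local} Lipschitz continuity of $d_F$, with a common modulus $\ell$ but on point-dependent neighborhoods of graph points whose radii may shrink as $\|x\|\to\infty$, and it says nothing about points $(x,y)$ of the region lying far from $\mathrm{gph}F$. Over an unbounded region there is no compactness to make the radii uniform, so you cannot feed (iii) into Proposition~\ref{Prop43}; the local-to-uniform passage is the actual content here. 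This is precisely why the paper does not derive (ii) from (iii): it proves the step labelled ``(i) and (iii) $\Rightarrow$ (ii),'' establishing the Lipschitz-like inclusion by a separate Ekeland variational principle argument (Theorem~\ref{Theorem210}, mirroring (i) $\Rightarrow$ (ii) of Theorem~\ref{DL41}) launched from the coderivative bound $\ell_* < \infty$ coming from (i), and uses (iii) only to get the openness of $F^{-1}(V)\setminus\mathbb{B}_R$. Your scheme contains no substitute for this EVP step, so your cycle (iv) $\Rightarrow$ (iii) $\Rightarrow$ (ii) $\Rightarrow$ (i) breaks at this link --- even though your direct (ii) $\Rightarrow$ (i) argument (perturb $x_k$ along $u_k$ inside the open set, produce $y'\in F(x')$ with $\|y'-y_k\|\le \ell\|x'-x_k\|$ by the Aubin inclusion, and feed $(x',y')$ into the regular-normal inequality) is correct, correctly identifies openness as indispensable in light of Example~\ref{VD44}(i), and is in fact more direct than the paper's composite route (ii) $\Rightarrow$ (iii) $\Rightarrow$ (i).

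The second gap concerns your treatment of (iv). Your toolkit (the marginal representation of $d_F$ plus the sum rule, Lemma~\ref{Lemma2.9}) yields only one-sided inclusions, $\partial d_F(x,y)\subseteq\{(u,v): \|v\|\le 1,\ u\in D^*F(x,y)(-v)\}$ and likewise for the singular part; combined with positive homogeneity and Proposition~\ref{Prop37} this gives $\mathscr{F}^\infty\subseteq D^*F(\infty,\overline{y})(0)\times\{0\}$, hence (i) $\Rightarrow$ (iv), but \emph{not} the claimed equality, so ``(i) $\Leftrightarrow$ (iv) immediately'' is overstated --- as you yourself concede when you call the reverse half the main obstacle, without resolving it. (It can be rescued, but by a different tool: the exact regular-subdifferential formula $\widehat{\partial}d_F(x,y)=\widehat{N}_{\mathrm{gph}F}(x,y)\cap(\mathbb{R}^n\times\mathbb{B})$ at graph points, followed by rescaling a Fr\'echet normal $(u_k,v_k)$ with $v_k\to 0$ by $\lambda_k:=\big(\max\{\|v_k\|,1/k\}\big)^{-1}\to\infty$ to manufacture the factor $r\searrow 0$ in the definition of $\mathscr{F}^\infty$; the paper instead avoids the identity altogether, routing (iv) $\Leftrightarrow$ (i) through (iii) and the classical pointwise Mordukhovich criterion.) A related flaw affects your (iv) $\Rightarrow$ (iii): failure of local Lipschitz continuity near a graph point produces unbounded subgradients of $d_F$ at \emph{nearby}, generally off-graph points, whereas $\mathscr{F}^\infty$ admits only base points on $\mathrm{gph}F$; transferring those subgradients to genuine graph points requires projection-type estimates for the moving-set distance function that you do not supply. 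The paper sidesteps this by first reducing (iii), via the cited Lipschitz criterion of Mordukhovich, to a subgradient bound at graph points only, and then running the normalization argument there.
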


\begin{proof}
We will show that (i) $\Leftrightarrow$ (iii) $\Leftrightarrow$ (iv), (i) and (iii) $\Rightarrow$ (ii), and (ii) $\Rightarrow$ (iii).

(i) $\Rightarrow$ (iii). Let
\begin{eqnarray*}
\ell_* &:=& \sup \,\{\|u\| \mid u \in D^*F (\infty, \overline{y})(v) \textrm{ and } \|v\|  \le  1 \}.
\end{eqnarray*} 
Our assumption, together with the positive homogeneity of coderivative mappings, ensures that $\ell_* < +\infty.$ Take any $\ell > \ell_*.$ 
We will show that there exist a constant $R > 0$ and a neighborhood $V$ of $\overline{y}$ in $\mathbb{R}^m$ such that for all $(x, y) \in \mathrm{gph} F \cap \big( (\mathbb{R}^n \setminus \mathbb{B}_R) \times V \big)$ the following inequality holds
\begin{eqnarray*}
\|D^*F(x, {y})\| &\le& \ell.
\end{eqnarray*}
This claim, of course, together with \cite[Theorem~3.3]{Mordukhovich2018} and \cite[Theorem~2.3]{Rockafellar1985}, implies that the function $d_F$ is Lipschitz on some neighborhood of $(x, y)$ with modules $\ell,$ which proves~(iii).

Indeed, if the claim were not true, there would exist a sequence $(x_k, y_k) \in \mathrm{gph} F$ tending to $(\infty, \overline{y})$ such that 
\begin{eqnarray*}
\|D^*F(x_k, y_k)\| &>& \ell.
\end{eqnarray*}
By definition, we can find $u_k \in \mathbb{R}^n$ and $v_k \in \mathbb{R}^m$ with $u_k \in D^*F(x_k, y_k)(v_k)$ so that $\|u_k\| > \ell \|v_k\|.$
Clearly, $u_k \ne 0,$ $\frac{\|v_k\|}{\|u_k\|} < \frac{1}{\ell}$ and $\frac{u_k}{\|u_k\|} \in D^*F(x_k, y_k)(\frac{v_k}{\|u_k\|}).$ 
Let $u$ and $v$ be, respectively, cluster points of the (bounded) sequences $\frac{u_k}{\|u_k\|}$
and $\frac{v_k}{\|u_k\|}.$ Certainly $\|u\| = 1 \ge \ell \|v\|,$ $u \in D^*F(\infty, \overline{y})(v)$ (by Proposition~\ref{Prop37}). On the other hand, from the definition of $\ell_*$ we deduce that $\|u\| \le \ell_* \|v\|.$ Therefore $v \ne 0,$ and so $\ell_* \ge \ell,$ in contradiction to the choice of $\ell.$

(iii) $\Rightarrow$ (i). Take any $u \in D^*F(\infty, \overline{y})(0).$ By Proposition~\ref{Prop37}, there exist sequences $(x_k, y_k) \in \mathrm{gph}F$ and $(u_k, v_k) \in \mathbb{R}^n \times \mathbb{R}^m$ with $u_k \in D^*F(x_k, y_k)(v_k)$ such that
\begin{eqnarray*}
\|x_k\| \to \infty, \quad y_k \to \overline{y}, \quad u_k \to u, \quad \textrm{ and } \quad v_k \to 0.
\end{eqnarray*}
For all $k$ large enough, the function $d_F$ is Lipschitz continuous with modules $\ell$ on some neighborhood of $(x_k, y_k),$ which, together with \cite[Theorem~2.3]{Rockafellar1985} and \cite[Theorem~3.3(iii)]{Mordukhovich2018}, implies that $\|u_k\| \le \ell \|v_k\|.$ Letting $k \to \infty$ we get $\|u\| = 0,$ as required.

(i) and (iii) $\Rightarrow$ (ii). Let
\begin{eqnarray*}
\ell_* &:=& \sup \,\{\|u\| \mid u \in D^*F (\infty, \overline{y})(v) \textrm{ and }  \|v\|  \le  1 \}.
\end{eqnarray*} 
Our assumption, together with the positive homogeneity of coderivative mappings, ensures that $\ell_* < +\infty.$ Let $\ell > \ell_*.$ 

We first show that $F$ is Lipschitz-like around $(\infty, \overline{y})$ with modulus $\ell.$
By contradiction, assume that there are sequences $x_k \to \infty,$ $x_k' \to\infty$ and $y_k \in F(x_k)$ satisfying the following conditions
\begin{eqnarray*}
y_k \to \overline{y}, \quad \mathrm{dist}(\overline{y}, F(x_k')) \to 0, \quad 
\mathrm{dist}(y_k, F(x_k')) &> & \ell \|x_k - x_k'\|.
\end{eqnarray*}
Certainly $r_k := \ell \|x_k - x_k'\| > 0$ and $x_k' \not \in F^{-1}(y_k + r_k \mathbb{B}).$

Fix $k$ and define the function $\varphi \colon \mathbb{R}^n\times\mathbb{R}^m \to \overline{\mathbb{R}}$ by
\begin{eqnarray*}
\varphi (x, y) & := & \|x - x_k'\| + \delta_{\mathrm{gph} F}(x, y),
\end{eqnarray*}
which is lower semi-continuous as the graph of $F$ is closed. Clearly $\inf_{(x, y) \in \mathbb{R}^n\times\mathbb{R}^m} \varphi (x, y) = 0$ and $\varphi (x_k, y_k) > 0.$ Applying Theorem~\ref{Theorem210} to $\varphi$ with $\epsilon := \varphi (x_k, y_k) = \ell^{-1} r_k, \lambda := r_k,$ the initial point $(x_k, y_k),$ and the metric $d \colon \mathbb{R}^n\times\mathbb{R}^m\to\mathbb{R}, (x, y) \mapsto \sqrt{r_k}  \|x\| + \|y\|,$ we find a pair $(\overline{x}_k, \overline{y}_k) \in \mathrm{gph}\, F$ such that 
\begin{eqnarray*}
\sqrt{r_k}  \|\overline{x}_k - x_k\| + \|\overline y_k - y_k\| & \leq & r_k
\end{eqnarray*}
and $(\overline{x}_k, \overline y_k)$ is a global minimizer of the lower semi-continuous function 
\begin{eqnarray*}
\psi \colon \mathbb{R}^n \times \mathbb{R}^m \to\overline{\mathbb{R}}, && (x, y) \mapsto \|x - x_k'\| + {\ell^{-1}} \big(\sqrt{r_k} \|x - \overline{x}_k\| + \|y - \overline{y}_k\| \big) + \delta_{\mathrm{gph} F}(x, y).
\end{eqnarray*}
By Fermat's rule (see Lemma~\ref{Lemma2.8}), then $(0, 0) \in \partial \psi (\overline{x}_k, \overline{y}_k).$ Note that $\overline{x}_k \ne x_k'$ as $\overline{x}_k \in F^{-1}(\overline{y}_k ) \subset F^{-1}(y_k + r_k \mathbb{B})$ and $x_k' \notin F^{-1}(y_k + r_k \mathbb{B}).$ In view of Lemmas~\ref{Lemma2.6} and \ref{Lemma2.7}, we have
\begin{eqnarray*}
(0, 0) 
&\in & \frac{\overline{x}_k - x_k'}{\|\overline{x}_k - x_k'\|} \times \{0\}  + {\ell^{-1}} \big( \sqrt{r_k}  \mathbb{B} \times \mathbb{B} \big) + {N}_{\mathrm{gph} F}(\overline{x}_k, \overline{y}_k),
\end{eqnarray*}
or equivalently,
\begin{eqnarray*}
(0, 0) 
&\in & {\ell} \frac{\overline{x}_k - x_k'}{\|\overline{x}_k - x_k'\|}  \times \{0\} + \big( \sqrt{r_k}  \mathbb{B} \times \mathbb{B} \big) + {N}_{\mathrm{gph} F}(\overline{x}_k, \overline{y}_k).
\end{eqnarray*}
Hence, there is a vector $(u_k, v_k)\in \mathbb{R}^n\times \mathbb{R}^m$ satisfying
\begin{eqnarray*}
\ell - \sqrt{r_k}  \le \|u_k\| \le \ell  + \sqrt{r_k},  \quad \|v_k\| \leq 1,  \quad \textrm{ and } \quad  u_k \in D^*F(\overline{x}_k, \overline{y}_k)(v_k).
\end{eqnarray*}
Passing to subsequences if necessary, we may assume that $(u_k, v_k)\to (u, v).$ Certainly $\|u\|  = \ell$ and $\|v\| \le 1.$ By our construction,  $\overline{x}_k\to\infty$ and $\overline{y}_k \to \overline{y}.$ In view of Proposition~\ref{Prop37}, $u \in D^* F(\infty, \overline y)(v),$ which contradicts the choice of $\ell.$

Therefore, $F$ is Lipschitz-like around $(\infty, \overline{y})$ with modulus $\ell,$ i.e, there exist a constant $R > 0$ and an open neighborhood $V$ of $\overline{y}$ in $\mathbb{R}^m$ such that 
\begin{eqnarray*}
F(x) \cap V &\subset& F(x') + \ell\|x - x'\| \mathbb{B} \quad \text{ for all } \quad x, x' \in F^{-1}(V) \setminus  \mathbb{B}_R. 
\end{eqnarray*}
Moreover, since the condition~(iii) holds, by increasing $R$ and shrinking $V$ if necessary, we may assume that the function $d_F$ is Lipschitz continuous with modules $\ell$ on some neighborhood of each point of $\mathrm{gph} F \cap \big( (\mathbb{R}^n \setminus  \mathbb{B}_R) \times V \big).$ Hence, to complete the proof of this implication, it suffices to prove that $F^{-1}(V) \setminus \mathbb{B}_R$ is an open set. To see this, let $a \in F^{-1}(V) \setminus \mathbb{B}_R$ and we will show that $a$ is an interior point of $F^{-1}(V) \setminus \mathbb{B}_R.$ Indeed, by definition, there is $b \in F(a) \cap V,$ which yields $b + \epsilon \mathbb{B} \subset V$ for some $\epsilon > 0$ because $V$ is open. Choose a positive constant $\eta < \frac{\epsilon}{\ell}$ small enough that $a + \eta \mathbb{B} \subset \mathbb{R}^n \setminus \mathbb{B}_R$ and the function $x \mapsto d_F(x, b)$ is Lipschitz with modulus $\ell$ on $a + \eta \mathbb{B}.$ Take any $x \in  a + \eta \mathbb{B}.$ We have
\begin{eqnarray*}
\mathrm{dist}(b, F(x)) \ = \ d_F(x, b) &\le& d_F(a, b) + \ell\|x - a\| \ \le \ \ell \eta \ < \ \epsilon.
\end{eqnarray*}
Since the set $F(x)$ is closed, there is $y \in F(x)$ such that
\begin{eqnarray*}
\|b - y\| \ = \ \mathrm{dist}(b, F(x))  \ < \ \epsilon,
\end{eqnarray*}
which yields $y \in b + \epsilon \mathbb{B} \subset V,$ and so $x \in F^{-1}(V).$ 
This being true for arbitrary $x \in a + \eta \mathbb{B} \subset \mathbb{R}^n \setminus \mathbb{B}_R,$ we see that $a + \eta \mathbb{B} \subset F^{-1}(V) \setminus \mathbb{B}_R,$ which yields that $a$ is an interior point of $F^{-1}(V) \setminus \mathbb{B}_R,$ as required.

(ii) $\Rightarrow$ (iii). 
Take any $(a, b) \in \mathrm{gph} F \cap \big( (\mathbb{R}^n \setminus  \mathbb{B}_R) \times V \big).$ Then $b + \epsilon \mathbb{B} \subset V$
for some $\epsilon > 0.$ Moreover, since the set $F^{-1}(V) \setminus \mathbb{B}_R$ is open, there exists a constant $\eta > 0$ such that $a + \eta \mathbb{B} \subset F^{-1}(V) \setminus  \mathbb{B}_R.$ Our assumption, together with Proposition~\ref{Prop43}, implies that the function $d_F$ is Lipschitz with modulus $\ell$ on $(a + \eta \mathbb{B}) \times (b + \epsilon \mathbb{B}),$ and this is what was required.

(iii) $\Rightarrow$ (iv). 
Observe first that $d_F$ is nonnegative on $\mathbb{R}^n \times \mathbb{R}^m$ and is equal to zero on the graph of $F.$ By Fermat's rule (see Lemma~\ref{Lemma2.8}), then $0 \in \partial d_F (x, y)$ for all $(x, y) \in \mathrm{gph} F.$ By definition, $0 \in \mathscr{F}^\infty.$ 
Hence, it remains only to verify that $\mathscr{F}^\infty \subset \{0\}.$ To see this, take any $w \in \mathscr{F}^\infty.$ By definition,  there exist sequences $(x_k, y_k) \xrightarrow{\mathrm{gph} F} (\infty, \overline{y}),$ $r_k \searrow 0$ and $w_k \in \partial d_F (x_k, y_k)$ such that $r_k w_k \to w.$ Then for all $k$ sufficiently large we have $(x_k, y_k) \in \mathrm{gph} F \cap \big( (\mathbb{R}^n \setminus  \mathbb{B}_R) \times V \big).$ Since $d_F$ is Lipschitz continuous with modules $\ell$ around $(x_k, y_k)$ for such $k,$ it follows from \cite[Theorem~1.22]{Mordukhovich2018} that $\|w_k\| \leq \ell.$ Therefore $ r_k w_k \to 0$ and so $w = 0,$ as required.

(iv) $\Rightarrow$ (iii). By \cite[Theorem~1.22]{Mordukhovich2018} again, it suffices to verify that there exist constants $R > 0, \ell > 0$ and a neighborhood $V$ of $\overline{y}$ such that for all $(x, y) \in \mathrm{gph} F \cap \big( (\mathbb{R}^n \setminus  \mathbb{B}_R) \times V \big)$ we have
\begin{eqnarray*}
\|w\| &\leq& \ell \quad \textrm{ whenever } \quad w \in \partial d_F (x, y).
\end{eqnarray*}
Indeed, if this claim is not true, we can find sequences $(x_k, y_k) \xrightarrow{\mathrm{gph} F} (\infty, \overline{y}) $  and $w_k \in \partial d_F(x_k, y_k)$ such that $\|w_k\| >  k$ for all $k.$ Passing to a subsequence if necessary we can assume that the sequence $\frac{1}{\|w_k\|}w_k$ converges to a point $w \in \mathbb{R}^n \times \mathbb{R}^m.$ Certainly $w \in \mathscr{F}^\infty$ and $\|w\| = 1,$ a contradiction.

Finally, the last statement of the theorem follows directly from the above arguments.
\end{proof}

\begin{example}{\rm 
Let $F \colon \mathbb{R} \rightrightarrows \mathbb{R}$ be the set-valued mapping defined by
\begin{eqnarray*}
F(x) &:=&
\begin{cases}
(-\infty, 0] & \textrm{ if } x \leq 0,\\
\{x^2\} &\textrm{ otherwise}.
\end{cases}
\end{eqnarray*}
Let $\overline{y} := 0 \in J(F) = (-\infty, 0].$ An easy computation shows that $N_{\mathrm{gph}\,F}(\infty, \overline{y}) = \{0\}\times [0, \infty),$ and so  $D^*F(\infty, \overline{y})(0) = \{0\}.$ Hence the equivalent conditions (ii)-(iv) in Theorem~\ref{Thm45} are satisfied.
}\end{example}

\section{Fermat's rule at infinity} \label{Section5}

In this section we establish {\em necessary optimality conditions at infinity} for weak efficient values for set-valued optimization problems. 

Let $K \subsetneq \mathbb{R}^m$ be a pointed closed convex cone with nonempty interior which defines a partial order $\leqslant_{K}$ on $\mathbb{R}^m$ by the relation:
\begin{eqnarray*}
y_1 \leqslant_{K} y_2 \quad \textrm{ if and only if } \quad y_2 - y_1 \in K.
\end{eqnarray*}
Recall that the {\em positive polar} of $K$ is defined to be the set
\begin{eqnarray*}
K^{+} &:=& \{ c^{*} \in \mathbb{R}^m \mid \langle c^{*}, y \rangle \geq 0 \quad \textrm{ for all } \quad y \in K \}.
\end{eqnarray*}
By definition, $K^{+}$ is a closed convex cone. 

Let $F \colon \mathbb{R}^n \rightrightarrows \mathbb{R}^m$ be a set-valued mapping of closed graph and $\Omega$ be an unbounded closed subset of $\mathbb{R}^n.$ Consider the set-valued optimization problem with explicit geometry constraints
\begin{equation} \label{Problem}
\mathrm{minimize}_{K} \ F (x) \quad \textrm{ subject to } \quad x \in \Omega. \tag{P}
\end{equation}

\begin{definition}{\rm
We say that $\overline{y} \in \cl F(\Omega)$ is a {\em weak efficient value} for \eqref{Problem} if
\begin{eqnarray*}
\left(\overline{y} - {\rm int} K\right)\cap F(\Omega) &=& \emptyset. 
\end{eqnarray*}
}\end{definition}

Let $\overline{y} \in \cl F(\Omega)$ be a weak efficient value for \eqref{Problem}. It is well known in the theory of set-valued optimization (see, for example, \cite[Theorem~9.22 and Corollary~9.23]{Mordukhovich2018}) that if $\overline{y} \in F(\overline{x})$ 
for some $\overline{x} \in \Omega$ satisfying the constraint qualification
\begin{eqnarray*}
D^*F(\overline{x}, \overline{y})(0) \cap \big(-N_{\Omega}(\overline{x}) \big) &=& \{0\},
\end{eqnarray*}
then there exists a vector $c^* \in K^{+} \setminus \{0\}$ such that
\begin{eqnarray*}
0 &\in& D^*F(\overline{x}, \overline{y}) (c^*) + N_{\Omega}(\overline{x}).
\end{eqnarray*}
Here, we are interested in the case where $\overline{y} \not \in F(\Omega).$ To this end, recall from \cite[Definition~3.1]{PHAMTS2023-5} that the {\em normal cone to the set $\Omega$ at infinity} is defined by
\begin{eqnarray*}
N_{\Omega}(\infty) &:=& \Limsup_{x \xrightarrow{\Omega} \infty} \widehat{N}_{\Omega}(x).
\end{eqnarray*} 
It is not hard see that (see \cite[Proposition~3.5]{PHAMTS2023-5})
\begin{eqnarray*}
N_{\Omega}(\infty) &=& \Limsup_{x \xrightarrow{\Omega} \infty} N_{\Omega}(x).
\end{eqnarray*}
Now we are in a position to state our main result of this section concerning necessary optimality conditions for \eqref{Problem}.

\begin{theorem}[Fermat's rule at infinity] \label{Fermatatinfinity}
Let $\overline{y} \in \cl F(\Omega) \setminus F(\Omega)$ be a weak efficient value for \eqref{Problem} satisfying the constraint qualification at infinity
\begin{eqnarray*}
D^*F(\infty, \overline{y})(0) \cap \big(-N_{\Omega}(\infty)\big) &=& \{0\}.
\end{eqnarray*}
Then there exists a vector $c^* \in K^{+} \setminus \{0\} $ such that
\begin{eqnarray*}
0 &\in& D^*F(\infty, \overline{y}) (c^*) + N_{\Omega}(\infty).
\end{eqnarray*} 
\end{theorem}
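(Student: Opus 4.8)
The plan is to reduce the set-valued problem to a scalar minimization \emph{at infinity} by a nonlinear scalarization, and then run the Ekeland--Fermat machinery already used in Theorems~\ref{DL41} and \ref{Thm45}. Fix $e \in \mathrm{int}\, K$ and introduce the Gerstewitz--Tammer function $\xi_e(y) := \inf\{t \in \mathbb{R} \mid y \in te - K\}$, which is finite (as $\mathrm{int}\, K \ne \emptyset$), continuous and sublinear, satisfies $\xi_e(y) < 0 \iff y \in -\mathrm{int}\, K$, and whose subdifferential obeys $\partial \xi_e(y) \subset \{c^* \in K^+ \mid \langle c^*, e\rangle = 1\}$ for every $y$ (a standard computation using that $K$ is a pointed closed convex cone). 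Put $\Theta := \mathrm{gph} F \cap (\Omega \times \mathbb{R}^m)$, which is closed. Weak efficiency of $\overline{y}$ reads $\xi_e(y - \overline{y}) \ge 0$ for every $(x,y) \in \Theta$, so the proper lower semi-continuous function $\Phi(x,y) := \xi_e(y - \overline{y}) + \delta_\Theta(x,y)$ is bounded below by $0$ with $\inf \Phi = 0$. Since $\overline{y} \in \cl F(\Omega) \setminus F(\Omega)$, closedness of $\mathrm{gph} F$ and of $\Omega$ forces any $(x_k,y_k)\in\Theta$ with $y_k \to \overline{y}$ to have $x_k \to \infty$ (a bounded cluster point $\overline{x}\in\Omega$ would give $\overline{y}\in F(\overline{x})\subset F(\Omega)$); fixing such a minimizing sequence gives $\Phi(x_k,y_k) = \xi_e(y_k-\overline{y}) \to 0$.

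I would then apply the Ekeland variational principle (Theorem~\ref{Theorem210}) to $\Phi$ with $\epsilon_k := \max\{\Phi(x_k,y_k), k^{-2}\} \searrow 0$, the sum metric on $\mathbb{R}^n\times\mathbb{R}^m$, and $\lambda_k := \sqrt{\epsilon_k} \searrow 0$, obtaining $(\overline{x}_k, \overline{y}_k) \in \Theta$ with $\|\overline{x}_k - x_k\| + \|\overline{y}_k - y_k\| \le \lambda_k \to 0$ (hence $\overline{x}_k \to \infty$, $\overline{x}_k \in \Omega$, $\overline{y}_k \to \overline{y}$) at which $(x,y)\mapsto \xi_e(y-\overline{y}) + \sqrt{\epsilon_k}\big(\|x-\overline{x}_k\| + \|y - \overline{y}_k\|\big) + \delta_\Theta(x,y)$ attains a global minimum. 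Fermat's rule (Lemma~\ref{Lemma2.8}), the sum rule (Lemma~\ref{Lemma2.9}, whose only non-Lipschitz summand is $\delta_\Theta$) and Lemmas~\ref{Lemma2.6} and \ref{Lemma2.7} then yield an inclusion of the form $0 \in \{0\}\times\partial\xi_e(\overline{y}_k - \overline{y}) + \sqrt{\epsilon_k}\,(\mathbb{B}\times\mathbb{B}) + N_\Theta(\overline{x}_k, \overline{y}_k)$.

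The next, technically central, step is to split $N_\Theta = N_{\mathrm{gph} F \cap (\Omega\times\mathbb{R}^m)}$. Exactly as in the sum rule, I would first show the constraint qualification at infinity lifts to all nearby finite points: otherwise normalized elements of $D^*F(x,y)(0)\cap(-N_\Omega(x))$ would, via Propositions~\ref{Prop33} and \ref{Prop37} and the identity $N_\Omega(\infty)=\Limsup_{x\xrightarrow{\Omega}\infty} N_\Omega(x)$, produce a unit vector in $D^*F(\infty,\overline{y})(0)\cap(-N_\Omega(\infty))$, contradicting the hypothesis. Hence for large $k$ the intersection rule \cite[Theorem~2.16]{Mordukhovich2018} applies, giving $N_\Theta(\overline{x}_k, \overline{y}_k) \subset N_{\mathrm{gph} F}(\overline{x}_k, \overline{y}_k) + N_\Omega(\overline{x}_k)\times\{0\}$ (using $N_{\Omega\times\mathbb{R}^m}(x,y) = N_\Omega(x)\times\{0\}$). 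Reading off the two coordinates produces $c_k^* \in \partial\xi_e(\overline{y}_k - \overline{y})$, $q_k, p_k \in \mathbb{B}$, $a_k \in D^*F(\overline{x}_k,\overline{y}_k)\big(c_k^* + \sqrt{\epsilon_k}\,q_k\big)$ and $d_k \in N_\Omega(\overline{x}_k)$ with $a_k + d_k = -\sqrt{\epsilon_k}\,p_k \to 0$.

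Finally I would pass to the limit. As $\|c_k^*\|$ is bounded by the Lipschitz modulus of $\xi_e$, I may assume $c_k^* \to c^*$; from $\langle c_k^*, e\rangle = 1$ and $c_k^* \in K^+$ (closed cone) the limit satisfies $c^* \in K^+$ and $\langle c^*, e\rangle = 1$, so $c^* \ne 0$. Boundedness of $a_k$ is the one remaining obstacle, and it is again forced by the qualification: if $\|a_k\|\to\infty$, then $a_k/\|a_k\| \in D^*F(\overline{x}_k, \overline{y}_k)\big((c_k^* + \sqrt{\epsilon_k}\, q_k)/\|a_k\|\big)$ with argument tending to $0$, so a cluster point $\widehat{a}$ has $\|\widehat{a}\| = 1$ and $\widehat{a} \in D^*F(\infty,\overline{y})(0)$ by Proposition~\ref{Prop37}, while $d_k/\|a_k\| \to -\widehat{a}$ with $d_k/\|a_k\| \in N_\Omega(\overline{x}_k)$ gives $-\widehat{a} \in N_\Omega(\infty)$, again contradicting the qualification. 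Thus $a_k \to a$ and $d_k \to -a$; Proposition~\ref{Prop37} gives $a \in D^*F(\infty,\overline{y})(c^*)$, the Limsup formula for $N_\Omega(\infty)$ gives $-a \in N_\Omega(\infty)$, and therefore $0 = a + (-a) \in D^*F(\infty,\overline{y})(c^*) + N_\Omega(\infty)$ with $c^* \in K^+\setminus\{0\}$. The main difficulty throughout is the twofold use of the qualification --- once to lift the intersection rule to finite points and once to keep the coderivative multipliers $a_k$ bounded --- both executed through the outer-limit characterizations of the normal cone and coderivative at infinity.
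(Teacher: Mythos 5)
Your proof is correct, but it follows a genuinely different route from the paper's. The paper first treats the unconstrained case $\Omega = \mathbb{R}^n$ (splitting into two subcases according to whether the scalarized values $\psi(x_k,y_k)$ vanish or not, applying Ekeland only in the latter), and then handles general $\Omega$ by a reduction: it replaces $F$ by $F + \Delta_{\Omega}$, where $\Delta_{\Omega}$ is the indicator mapping of Example~\ref{VD36}(iv), and invokes the sum rule at infinity (Theorem~\ref{SumeRule}), whose boundedness condition and constraint qualification are verified for the pair $(F, \Delta_{\Omega})$; the hypothesis $D^*F(\infty,\overline{y})(0) \cap (-N_{\Omega}(\infty)) = \{0\}$ enters only through condition (b) of that theorem. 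You instead run a single unified argument on the intersection $\Theta = \mathrm{gph}\,F \cap (\Omega \times \mathbb{R}^m)$: you lift the qualification at infinity to finite points via Proposition~\ref{Prop37} and the outer-limit formula for $N_{\Omega}(\infty)$, apply the finite-dimensional intersection rule \cite[Theorem~2.16]{Mordukhovich2018} at the Ekeland points, and then use the qualification a second time to keep the multipliers $a_k$ bounded --- which is precisely the internal mechanism of the paper's Theorem~\ref{SumeRule}, here inlined rather than quoted. Two smaller deviations are also worth noting: your choice $\epsilon_k := \max\{\Phi(x_k,y_k), k^{-2}\} > 0$ lets Ekeland absorb the paper's Case~1.1 (exact minimizers) into the general case, and your limiting step retains slightly more information than the statement requires (namely $\langle c^*, e \rangle = 1$ and an explicit $a \in D^*F(\infty,\overline{y})(c^*)$ with $-a \in N_{\Omega}(\infty)$). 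What each approach buys: the paper's reduction is modular and showcases the calculus at infinity it has just developed (the sum rule and the indicator mapping), whereas yours is self-contained, avoids checking the sum rule's boundedness hypothesis (trivial for $\Delta_{\Omega}$, but still a verification), and makes the double role of the constraint qualification explicit. Both rest on the same Ekeland--Fermat--scalarization core, with your $\xi_e$ identical to the paper's $\varphi$ of Lemma~\ref{Tammerbook}.
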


In order to prove the theorem, we fix an element $e \in {\rm int} K$ and define the function $\varphi \colon \mathbb{R}^m \to \mathbb{R}$ by
\begin{eqnarray*}
\varphi(y) &:=& \inf \{ \lambda \in \mathbb{B} \mid \lambda e \in y + K\}.
\end{eqnarray*}

\begin{lemma} \label{Tammerbook}
The following properties hold:
\begin{enumerate}[{\rm (i)}]
\item $\varphi$ is well defined and globally Lipschitz on $\mathbb{R}^m.$
\item For any $y \in \mathbb{R}^m,$ we have
\begin{eqnarray*}
\partial \varphi (y) &=& \{ c^{*} \in K^+ \mid  \langle c^{*}, e \rangle =1, \langle c^{*},y\rangle = \varphi (y) \}.
\end{eqnarray*} 
\end{enumerate}
\end{lemma}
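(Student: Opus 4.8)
The plan is to reduce both assertions to the single structural fact that $\varphi$ is the support function of the set
\begin{eqnarray*}
C &:=& \{ c^* \in K^+ \mid \langle c^*, e\rangle = 1\},
\end{eqnarray*}
that is, that $\varphi(y) = \max_{c^* \in C} \langle c^*, y\rangle$ for every $y,$ where $C$ is nonempty, compact and convex. Granting this representation, part~(i) is immediate from the elementary theory of support functions, while part~(ii) follows because the limiting subdifferential of a convex function coincides with its subdifferential in the sense of convex analysis, whose value at $y$ for a support function is exactly the set of maximizers of $\langle \cdot, y\rangle$ over $C.$

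First I would settle that $\varphi$ is real-valued and Lipschitz. Since $e \in \mathrm{int}\, K,$ fix $\delta > 0$ with $e + \delta \mathbb{B} \subset K;$ note also that any nonzero $c^* \in K^+$ then satisfies $\langle c^*, e\rangle > 0.$ The constraint set $\{\lambda \mid \lambda e - y \in K\}$ is nonempty, since $\lambda e - y = \lambda(e - \lambda^{-1} y) \in K$ once $\lambda \ge \|y\|/\delta,$ and it is bounded below, since pairing $\lambda e - y \in K$ with a nonzero $c^* \in K^+$ gives $\lambda \ge \langle c^*, y\rangle / \langle c^*, e\rangle;$ hence $\varphi(y) \in \mathbb{R}.$ For the Lipschitz estimate, if $\lambda e - y' \in K$ and $s := \delta^{-1}\|y - y'\|,$ then
\begin{eqnarray*}
(\lambda + s)e - y &=& (\lambda e - y') + s\big(e - s^{-1}(y - y')\big) \ \in \ K,
\end{eqnarray*}
because $s^{-1}(y - y') \in \delta \mathbb{B}$ forces $e - s^{-1}(y-y') \in e + \delta \mathbb{B} \subset K;$ taking the infimum over $\lambda$ and interchanging $y$ and $y'$ yields $|\varphi(y) - \varphi(y')| \le \delta^{-1}\|y - y'\|.$

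Next I would prove the representation. Writing $\sigma_C(y) := \sup_{c^* \in C}\langle c^*, y\rangle,$ the inequality $\varphi(y) \ge \langle c^*, y\rangle$ for each $c^* \in C$ is the pairing computation already used, so $\varphi \ge \sigma_C$ pointwise. For the reverse inequality together with attainment, set $t := \varphi(y).$ Closedness of $K$ gives $te - y \in K,$ while minimality of $t$ forces $te - y \notin \mathrm{int}\, K,$ so $te - y$ lies on the boundary of $K.$ The supporting hyperplane theorem then produces $c^* \ne 0$ with $c^* \in K^+$ and $\langle c^*, te - y\rangle = 0;$ since $\langle c^*, e\rangle > 0$ we may rescale so that $c^* \in C,$ and then $t = \langle c^*, y\rangle.$ This shows $\varphi = \sigma_C$ with the supremum attained; boundedness of $C$ (hence compactness, convexity being clear) again uses $e \in \mathrm{int}\, K.$ This completes~(i).

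For part~(ii), $\varphi$ is convex as a support function of a nonempty compact convex set, so its limiting subdifferential agrees with the convex subdifferential, and the classical formula
\begin{eqnarray*}
\partial \sigma_C(y) &=& \{ c^* \in C \mid \langle c^*, y\rangle = \sigma_C(y)\}
\end{eqnarray*}
identifies $\partial \varphi(y)$ with the claimed set. I expect the main obstacle to be the attainment direction of the representation: one must locate the supporting functional at the boundary point $te - y$ and verify $\langle c^*, e\rangle > 0$ in order to normalize it into $C.$ Everything else is either a routine feasibility manipulation or an appeal to the standard calculus of support functions and to the coincidence of the limiting and convex subdifferentials for convex functions.
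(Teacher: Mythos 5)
Your proposal is correct, but it takes a genuinely different route from the paper, which offers no argument at all for this lemma: the proof there is a bare citation to Lemma~2.4 of Durea--Tammer and Section~5.2 of Khan--Tammer--Z\u{a}linescu, where the Gerstewitz (Tammer) scalarizing functional is analyzed directly (sublinearity, continuity and the subdifferential formula are developed for that specific functional, in settings more general than $\mathbb{R}^m$). You instead reduce both parts to the single representation $\varphi = \sigma_C$ with $C = \{c^* \in K^+ \mid \langle c^*, e\rangle = 1\}$ nonempty, convex and compact, and all the steps check out: feasibility via $\lambda \ge \|y\|/\delta$, the lower bound by pairing with a nonzero element of $K^+$, the global Lipschitz constant $\delta^{-1}$ from the $\delta$-ball translation trick, the reverse inequality \emph{with attainment} from the supporting hyperplane at the boundary point $\varphi(y)e - y$ (the cone structure forces the supporting functional into $K^+$ and annihilates $\varphi(y)e - y$, and $\langle c^*, e\rangle > 0$ permits the normalization into $C$), and finally part~(ii) from the classical identity $\partial\sigma_C(y) = \{c^* \in C \mid \langle c^*, y\rangle = \sigma_C(y)\}$ combined with the coincidence of the limiting and convex subdifferentials for convex functions. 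What your approach buys is a short, self-contained, finite-dimensional proof in which (i) and (ii) flow from one structural fact; what the citation route buys is generality (the cited sources cover broader spaces and weaker hypotheses that your compactness-of-$C$ argument, which genuinely uses $e \in \mathrm{int}\,K$ and finite dimension, would not). Two small points would make your write-up airtight: the paper's definition reads $\inf\{\lambda \in \mathbb{B} \mid \lambda e \in y + K\}$, which must be a typo for $\lambda \in \mathbb{R}$ --- you silently (and correctly) adopted that reading, and with $\lambda \in \mathbb{B}$ literal the lemma would fail since $\varphi$ would take the value $+\infty$; and your pairing step presupposes $K^+ \ne \{0\}$, which does hold because the pointed closed convex cone $K$ is a proper subset of $\mathbb{R}^m$, so a nonzero separating functional exists --- worth one explicit line.
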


\begin{proof}
See \cite[Lemma~2.4]{Durea2009} and \cite[Section~5.2]{Khan2015}.
\end{proof}

\begin{proof}[Proof of Theorem~\ref{Fermatatinfinity}]
There are two cases to be considered.

\subsubsection*{Case 1: $\Omega$ is the whole space $\mathbb{R}^n$}
In this case, $N_{\Omega}(\infty) = \{0\},$ and so we must show that $0 \in D^*F(\infty , \overline{y}) (c^*)$ for some $c^* \in K^{+} \setminus \{0\}.$

Since $\overline{y} \in \cl F(\mathbb{R}^n)\setminus F(\mathbb{R}^n),$ there is a sequence $(x_k, y_k) \in \mathrm{gph} F$ such that $y_k \to \overline{y}.$ If the sequence $x_k$ has a cluster point $x^* \in \mathbb{R}^n,$ then $\overline{y} \in F(x^*)$ because the graph of $F$ is closed, which is a contradiction. Hence, $\lim_{k \to \infty} \|x_k\|=\infty.$

Define the Lipschitz function $\psi \colon \mathbb{R}^n \times \mathbb{R}^m \to \mathbb{R}$ by $\psi(x, y) := \varphi(y - \overline{y}).$ We first claim that $\psi$ is nonnegative on the graph of $F.$ Indeed, if this were not true, there would exist $({x}, {y}) \in \mathrm{gph} F $ such that $\psi(x, y) < 0.$ By definition, there exist ${\lambda} < 0$ and $y' \in K$ such that ${\lambda} e = {y} - \overline{y} + y'.$ It follows that
\begin{eqnarray*}
\overline{y} - {y} &=& -{\lambda} e + y' \ \in \ \mathrm{int}K + K \ \subset \ \mathrm{int} K,
\end{eqnarray*}
which contradicts our assumption that $\overline{y}$ is a weak efficient value for \eqref{Problem}.

Note that $\psi(x_k, y_k) \to 0$ as $k \to \infty.$ Therefore,
\begin{eqnarray*}
\inf_{(x,y)\in \mathbb{R}^n \times \mathbb{R}^m} \big (\psi + \delta_{\mathrm{gph} F} \big)(x, y) &=& 
\inf_{(x, y) \in {\mathrm{gph} F}} \psi(x, y) \ = \ 0.
\end{eqnarray*}
(Recall that $\delta_{\mathrm{gph} F}$ stands for the indicator function of the set $\mathrm{gph} F.$) By passing to subsequences, it suffices to consider the following two cases.

\subsubsection*{Case 1.1: $\psi(x_k, y_k) = 0$ for all $k$}
Then $(x_k,y_k)$ is a global minimizer of the optimization problem 
\begin{eqnarray*}
\min_{(x,y) \in \mathbb{R}^n \times \mathbb{R}^m} \big (\psi + \delta_{\mathrm{gph} F} \big)(x, y). 
\end{eqnarray*}
It follows from Lemma~\ref{Lemma2.8} that
\begin{eqnarray*}
(0, 0) &\in& \partial \big( \psi +\delta_{\mathrm{gph} F} \big)(x_k, y_k).
\end{eqnarray*}
Since the function $\psi$ is Lipschitz, the sum rule (see Lemma~\ref{Lemma2.9}) gives
\begin{eqnarray*}
(0, 0) 
&\in& \partial \psi(x_k, y_k) + \partial \delta_{\mathrm{gph} F}(x_k, y_k) \\
&=& \partial \psi(x_k, y_k) + N_{{\mathrm{gph} F}}\big((x_k, y_k) \big) 
\end{eqnarray*}
On the other hand, by definition, it is not hard to see that
\begin{eqnarray*}
\partial \psi(x_k, y_k) & \subset & \{0\} \times \partial \varphi(y_k - \overline{y}).
\end{eqnarray*}
Therefore, in view of Lemma~\ref{Tammerbook}, there is $c_k ^* \in \partial \varphi(y_k - \overline{y}) \subset K^{+}$ with $\langle c^*_k, e \rangle =1$ such that 
\begin{eqnarray*}
(0, -c^*_k) & \in & N_{{\mathrm{gph} F}}\big(({x}_k, {y}_k) \big). 
\end{eqnarray*}
Since the function $\varphi_{{e}}$ is globally Lipschitz, the sequence $c^*_k$ must be bounded and so it has a cluster point $c^*.$ 
Certainly, $c^* \in K^{+}$ and $\langle c^*, e \rangle =1,$ and so $c^* \ne 0.$ Moreover, we have $(0, -c^*) \in N_{{\mathrm{gph} F}} (\infty, \overline{y}),$ which means that $0 \in D^*F(\infty , \overline{y}) (c^*),$ as required.

\subsubsection*{Case 1.2: $\epsilon_k:= \psi(x_k, y_k) > 0$ for all $k$}
Employing the Ekeland variational principle (see Theorem~\ref{Theorem210}), we find $(\overline{x}_k, \overline{y}_k) \in \mathrm{gph} F$ such that 
\begin{eqnarray*}
\|(x_k,y_k) - (\overline{x}_k, \overline{y}_k)\| & \le & \sqrt{\epsilon_k},
\end{eqnarray*}
and for all $(x,y) \in \mathbb{R}^n \times \mathbb{R}^m$ we have
\begin{eqnarray*}
\psi(\overline{x}_k, \overline{y}_k) + \delta_{\mathrm{gph} F}(\overline{x}_k,\overline{y}_k) &\le& \psi(x, y) + \delta_{\mathrm{gph} F}(x,y)+\sqrt{\epsilon_k}\|(x,y)- (\overline{x}_k, \overline{y}_k)\|.
\end{eqnarray*}
The first condition implies that $(\overline{x}_k, \overline{y}_k) \to (\infty, \overline{y})$ as $k \to \infty,$ while the second condition implies that
$(\overline{x}_k,\overline{y}_k)$ is a global minimizer of the optimization problem
\begin{eqnarray*}
\min_{(x,y)\in \mathbb{R}^n \times \mathbb{R}^m} \big( \psi + \delta_{\mathrm{gph} F} + \sqrt{\epsilon_k}\|\cdot - (\overline{x}_k, \overline{y}_k)\| \big) (x, y). 
\end{eqnarray*}
By Fermat's rule (see Lemma~\ref{Lemma2.8}), we have
\begin{eqnarray*}
(0, 0) &\in& \partial \big( \psi +\delta_{\mathrm{gph} F}+ \sqrt{\epsilon_k}\|\cdot-(\overline{x}_k, \overline{y}_k)\|\big)(\overline{x}_k, \overline{y}_k).
\end{eqnarray*}
Since the functions $\psi$ and $\|\cdot-(\overline{x}_k, \overline{y}_k)\|$ are Lipschitz, the sum rule (see Lemma~\ref{Lemma2.9}) gives
\begin{eqnarray*}
(0, 0)  &\in& \partial \psi(\overline{x}_k, \overline{y}_k) + \partial \delta_{\mathrm{gph} F}(\overline{x}_k, \overline{y}_k)+ \sqrt{\epsilon_k}\partial \big (\|\cdot-(\overline{x}_k, \overline{y}_k)\| \big )(\overline{x}_k, \overline{y}_k) \\
&\subset& \{0\} \times \partial \varphi(\overline{y}_k - \overline{y}) + N_{{\mathrm{gph} F}}(\overline{x}_k, \overline{y}_k) + \sqrt{\epsilon_k}\partial \big (\|\cdot-(\overline{x}_k, \overline{y}_k)\| \big )(\overline{x}_k, \overline{y}_k) \\
&=& \{0\} \times \partial \varphi(\overline{y}_k - \overline{y}) + N_{{\mathrm{gph} F}}(\overline{x}_k, \overline{y}_k) + \sqrt{\epsilon_k} \mathbb{B}.
\end{eqnarray*}
This, together with Lemma~\ref{Tammerbook}, yields the existence of $(u_k,v_k) \in \mathbb{R}^n \times \mathbb{R}^m$ with $\|(u_k,v_k)\| \leq \sqrt{\epsilon_k}$ and $c_k ^* \in \partial \varphi(\overline{y}_k - \overline{y}) \subset K^{+}$ with $\langle c^*_k,e \rangle =1$ such that 
\begin{eqnarray*}
(u_k,v_k - c^*_k) & \in & N_{{\mathrm{gph} F}}(\overline{x}_k, \overline{y}_k). 
\end{eqnarray*}
As in the proof of Case~1.1, we can see that the sequence $c_k ^*$ has a cluster point $c^* \in K^{+} \setminus\{0\}.$ Since $(u_k,v_k) \to (0, 0)$ as
$k \to \infty,$ we get
$(0, -c^*) \in N_{{\mathrm{gph} F}} (\infty,\overline{y}),$ which means that $0 \in D^*F(\infty , \overline{y}) (c^*),$ as required.

\subsubsection*{Case 2: $\Omega$ is a proper subset of $\mathbb{R}^n$}

Define the set-valued mapping $\Delta_{\Omega} \colon \mathbb{R}^n \rightrightarrows \mathbb{R}^m$ by
\begin{eqnarray*}
\Delta_{\Omega} (x) &:=&
\begin{cases}
\{0\} &\textrm{ if } x \in \Omega,\\
\emptyset &\textrm{ otherwise.}
\end{cases}
\end{eqnarray*}
Clearly $\overline{y} \not \in (F + \Delta_{\Omega})(\mathbb{R}^n)$ and $\overline{y}$ is the weak efficient value of the unconstrained optimization problem
\begin{equation*}
\mathrm{minimize}_K \ (F + \Delta_{\Omega})(x) \quad \textrm{ subject to } \quad x \in \mathbb{R}^n.
\end{equation*}
Applying Case~1 to the set-valued mapping $F + \Delta_{\Omega}$ (whose graph is a closed set), we get  a vector $c^* \in K^{+} \setminus \{0\} $ satisfying
\begin{eqnarray} \label{3PT7}
0 &\in& D^*(F + \Delta_{\Omega})(\infty,\overline{y})(c^*).
\end{eqnarray}
On the other hand, we know from Example~\ref{VD36}(iv) that $J(\Delta_{\Omega}) = \{0\}$ and
\begin{eqnarray*}
D^* \Delta_{\Omega}(\infty, 0)(v) &=& N_{\Omega}(\infty) \quad \textrm{ for all } \quad v \in \mathbb{R}^m.
\end{eqnarray*}
Observe that the assumptions in Theorem~\ref{SumeRule} with respect to the set-valued mappings $F$ and $\Delta_{\Omega}$ are satisfied, and so 
\begin{eqnarray*}
D^*(F + \Delta_{\Omega})(\infty,\overline{y})(c^*) & \subset & D^* F(\infty,\overline{y})(c^*) + D^* \Delta_{\Omega}(\infty, 0)(c^*)\\
&=& D^*F(\infty , \overline{y}) (c^*) + N_{\Omega}(\infty).
\end{eqnarray*}
These relations when combined with \eqref{3PT7} yield the necessary optimality condition at infinity: 
\begin{eqnarray*}
0 & \in & D^*F(\infty , \overline{y}) (c^*) + N_{\Omega}(\infty),
\end{eqnarray*}
which is the desired conclusion.
\end{proof}

Theorem~\ref{Fermatatinfinity} is illustrated in the following examples.
 
\begin{example}{\rm 
(i) Consider the set-valued mapping $F \colon \mathbb{R} \rightrightarrows \mathbb{R}$ defined by
\begin{eqnarray*}
F(x) &:=&
\begin{cases}
[e^x, +\infty) & \textrm{ if } x \le 0, \\
\{x^2\} & \textrm{ otherwise.}
\end{cases}
\end{eqnarray*}
Let $\Omega := \mathbb{R}$ and $K := \mathbb{R}_+.$ Observe that $\overline{y} = 0 \in \cl F(\Omega) \setminus F(\Omega)$ is a weak efficient value for \eqref{Problem}. Moreover, a direct calculation shows that $N_{\mathrm{gph} F}(\infty, \overline{y}) = \{(0, -v) \mid v \ge 0\},$ and hence 
\begin{eqnarray*}
D^*F(\infty, \overline{y})(v) &=& 
\begin{cases}
\{0\} & \textrm{ if } v \ge 0, \\
\emptyset & \textrm{ if } v < 0.
\end{cases}
\end{eqnarray*}
Then for $c^* := 1 \in K^+ \setminus \{0\}$ we have $0 \in D^*F(\infty , \overline{y}) (c^*).$

(ii) Consider the set-valued mapping $F \colon \mathbb{R}^2 \rightrightarrows \mathbb{R}^2$ defined by
\begin{eqnarray*}
F(x) &:=& 
\begin{cases}
(x_1, 0) & \textrm{ if } x_1 \ge 0, \\
[1, +\infty)\times [1, +\infty) & \textrm{ otherwise.}
\end{cases}
\end{eqnarray*}
Let $\Omega := \{(x_1, x_2) \in \mathbb{R}^2 \mid x_1^2x_2 \ge 1\}$ and $K := \mathbb{R}^2_{+}.$ It is easy to check that $\overline{y}=(0,0) \in \cl F(\Omega) \setminus F(\Omega)$ is a weak efficient value for \eqref{Problem}. Moreover we have
\begin{eqnarray*}
D^*F(\infty, \overline{y})(0,0) =\{(0, 0)\},  \quad D^*F(\infty, \overline{y})(1,0)  = \{(1, 0)\},
\end{eqnarray*}
and 
\begin{eqnarray*}
N_{\Omega}(\infty) &=& \big(\{0\} \times (-\infty, 0] \big) \cup \big ((-\infty, \infty) \times \{0\} \big).
\end{eqnarray*}
Hence $D^*F(\infty, \overline{y})(0, 0) \cap \big(-N_{\Omega}(\infty)\big) = \{(0, 0)\}$ and so the constraint qualification at infinity holds. For $c^* := (1, 0) \in K^+ \setminus\{(0, 0)\}$ we have 
\begin{eqnarray*}
0 & \in & D^*F(\infty , \overline{y}) (c^*) + N_{\Omega}(\infty). 
\end{eqnarray*}
}\end{example}

\subsection*{Acknowledgments} The first author was supported by the National Research Foundation of Korea Grant funded by the Korean Government (NRF-2019R1A2C1008672). 
A part of this work was done while the second, third and fourth authors were visiting Department of Applied Mathematics, Pukyong National University, Busan, Korea in December 2022 and May 2023. These authors would like to thank the department for hospitality and support during their stay.

\bibliographystyle{abbrv}

\begin{thebibliography}{10}

\bibitem{Aubin1984}
J.-P. Aubin.
\newblock Lipschitz behavior of solutions to convex minimization problems.
\newblock {\em Math. Oper. Res.}, 9(1):87--111, 1984.

\bibitem{Aubin1990}
J.-P. Aubin and H.~Frankowska.
\newblock {\em Set-Valued Analysis}.
\newblock Mod. Birkh\"auser Class. Birkh\"auser, Boston, 1990.

\bibitem{Borwein2005}
J.~M. Borwein and Q.~J. Zhu.
\newblock {\em Techniques of Variational Analysis}.
\newblock Springer, New York, 2005.

\bibitem{Borwein1988}
J.~M. Borwein and D.~M. Zhuang.
\newblock Verifiable necessary and sufficient conditions for regularity of
  set-valued and single-valued maps.
\newblock {\em J. Math. Anal. Appl.}, 134:441--459, 1988.

\bibitem{Clarke1990}
F.~H. Clarke.
\newblock {\em Optimization and {{N}}onsmooth {{A}}nalysis}.
\newblock Classics Appl. Math. SIAM, Philadelphia, PA, 1990.

\bibitem{Clarke1998}
F.~H. Clarke, Y.~S. Ledyaev, R.~J. Stern, and P.~R. Wolenski.
\newblock {\em Nonsmooth analysis and control theory}, volume 178 of {\em
  Graduate Texts in Mathematics}.
\newblock Springer-Verlag, New York, 1998.

\bibitem{Durea2009}
M.~Durea and C.~Tammer.
\newblock Fuzzy necessary optimality conditions for vector optimization
  problems.
\newblock {\em Optimization}, 58:449--467, 2009.

\bibitem{Ekeland1974}
I.~Ekeland.
\newblock On the variational principle.
\newblock {\em J. Math. Anal. Appl.}, 47:324--353, 1974.

\bibitem{Ekeland1979}
I.~Ekeland.
\newblock Nonconvex minimization problems.
\newblock {\em Bull. Amer. Math. Soc.}, 1(3):443--474, 1979.

\bibitem{Ioffe2017}
A.~D. Ioffe.
\newblock {\em Variational Analysis of Regular Mappings. Theory and
  Applications}.
\newblock Springer Monogr. Math. Springer, 2017.

\bibitem{Khan2015}
A.~A. Khan, C.~Tammer, and C.~Z\u{a}linescu.
\newblock {\em Set-valued Optimization}.
\newblock Springer, Berlin, 2015.

\bibitem{PHAMTS2023-5}
D.~S. Kim, M.~T. Nguyen, and T.~S. Ph\d{a}m.
\newblock Subdifferentials at infinity and applications in optimization.
\newblock Available at https://arxiv.org/abs/2307.15861, 2023.

\bibitem{Mordukhovich1980}
B.~S. Mordukhovich.
\newblock Metric approximations and necessary optimality conditions for general
  classes of extremal problems.
\newblock {\em Soviet Math. Dokl.}, 22:526--530, 1980.

\bibitem{Mordukhovich1993}
B.~S. Mordukhovich.
\newblock Complete characterization of openness, metric regularity, and
  {{L}}ipschitzian properties of multifunctions.
\newblock {\em Trans. Amer. Math. Soc.}, 340:1--35, 1993.

\bibitem{Mordukhovich2006}
B.~S. Mordukhovich.
\newblock {\em Variational Analysis and Generalized Differentiation, I: Basic
  Theory; II: Applications}.
\newblock Springer, Berlin, 2006.

\bibitem{Mordukhovich2018}
B.~S. Mordukhovich.
\newblock {\em Variational Analysis and Applications}.
\newblock Springer, New York, 2018.

\bibitem{PHAMTS2023-4}
M.~T. Nguyen and T.~S. Ph\d{a}m.
\newblock Clarke's tangent cones, subgradients, optimality conditions and the
  {{L}}ipschitzness at infinity.
\newblock Available at https://arxiv.org/abs/2211.08677, 2023.

\bibitem{Penot1989}
J.-P. Penot.
\newblock Metric regularity, openness and {{L}}ipschitz behavior of maps.
\newblock {\em Nonlinear Anal.}, 13:629--643, 1989.

\bibitem{Penot2013}
J.~P. Penot.
\newblock {\em Calculus without derivatives}, volume 266 of {\em Graduate Texts
  in Mathematics}.
\newblock Springer, New York, 2013.

\bibitem{Rockafellar1985}
R.~T. Rockafellar.
\newblock Lipschitzian properties of multifunctions.
\newblock {\em Nonlinear Anal.}, 9(8):867--885, 1985.

\bibitem{Rockafellar1998}
R.~T. Rockafellar and R.~Wets.
\newblock {\em Variational Analysis}, volume 317 of {\em Grundlehren Math.
  Wiss.}
\newblock Springer, Berlin, 1998.

\end{thebibliography}

\end{document}